\numberwithin{equation}{section}
\newtheorem{thm}{Theorem}
\newtheorem{lem}[thm]{Lemma}
\newtheorem{rmk}[thm]{Remark}
\newtheorem{convention}[thm]{Convention}
\newtheorem{definition}[thm]{Definition}
\newtheorem{problem}[thm]{Problem}
\newtheorem{proposition}[thm]{Proposition}
\begin{document}

\title{The optimal range of the Calder\`{o}n operator and its applications
%to Hilbert transform and Triangular Truncation operator
}

\author{F. Sukochev}
\address{School of Mathematics and Statistics, University of New South Wales, Kensington, NSW, 2052, Australia}
\email{f.sukochev@unsw.edu.au}
\author{K. Tulenov}
\address{
Al-Farabi Kazakh National University, 050040 Almaty, Kazakhstan;
Institute of Mathematics and Mathematical Modeling, 050010 Almaty, Kazakhstan.}
\email{tulenov@math.kz}
\author{D. Zanin}
\address{School of Mathematics and Statistics, University of New South Wales, Kensington, NSW, 2052, Australia}
\email{d.zanin@unsw.edu.au}

%\thanks{F. Sukochev and D. Zanin research is supported by the ARC.}

\subjclass[2010]{46E30, 47B10, 46L51, 46L52, 44A15;  Secondary 47L20, 47C15.}
\keywords{symmetric function and operator spaces, (quasi-)Banach space, Hilbert transform, triangular truncation operator, optimal symmetric range, semifinite von Neumann algebra.}
\date{}
\begin{abstract} We identify the optimal range of the Calder\`{o}n operator and that of the classical Hilbert transform in the class of symmetric quasi-Banach spaces. Further consequences of our approach concern the optimal range of the triangular truncation operator, operator Lipschitz functions and commutator estimates in ideals of compact operators.
\end{abstract}

\maketitle

\section{Introduction}

The classical Hilbert transform $\mathcal{H}$ (for measurable functions on $\mathbb R$) is given by the formula
$$(\mathcal{H}x)(t)=p.v.\frac{1}{\pi}\int_{\mathbb{R}}\frac{x(s)}{t-s}ds.$$
In \cite[Definition III.4.1, p. 126]{BSh}, it is stated that $\mathcal{H}f$ is defined for all locally integrable functions $f.$ This is certainly not the case. In fact, the maximal domain for $\mathcal{H}$ is the Lorentz space $\Lambda_{\log}(\mathbb{R})$ (see Remark \ref{dom of Hilbert trans}) associated with the function $\log(1+t),$ $t>0.$

Let $E$ and $F$ be symmetric quasi-Banach function spaces on $\mathbb{R}.$ In this paper, we are considering the problem of what is the least symmetric quasi-Banach space $F(\mathbb{R})$  such that $\mathcal{H}:E(\mathbb{R})\rightarrow F(\mathbb{R})$ is bounded for a fixed symmetric quasi-Banach space $E(\mathbb{R}).$
%Our study concerns the behavior of classical Hilbert-type operators on such spaces and is motivated by the %following problems.
We shall be referring to the space $F(\mathbb{R})$ as the optimal range
space for the operator $\mathcal{H}$ restricted to the domain $E(\mathbb{R})\subseteq\Lambda_{\log}(\mathbb{R}).$

 If we restrict our attention to the subclass of symmetric {\it Banach spaces $E$ with Fatou norm} (that is, when the norm closed unit ball $B_E$ of $E$ is closed in $E$ with respect to almost everywhere convergence), then, in this (somewhat resticted) setting,  the problem reduces to a familiar problem settled by D. Boyd \cite{B} in 1967. Indeed, in this special case  \cite[Theorem 2.1]{B} asserts that $\mathcal{H}:E(\mathbb{R})\to F(\mathbb{R})$ if and only if $S:E(0,\infty)\to F(0,\infty),$ where the operator $S,$ known as the Calder\`{o}n operator, is defined by the formula
$$(Sx)(t)=\frac1t\int_0^tx(s)ds+\int_t^{\infty}\frac{x(s)}{s}ds,\quad x\in \Lambda_{\log}(0,\infty).$$
Effectively, the problem reduces to describing the optimal receptacle of the operator
$S.$

Consider the case when $E=L_p,$ $1<p<\infty.$ By Hardy's inequality, $S:L_p(0,\infty)\to L_p(0,\infty)$ and \cite[Theorem 2.1]{B} readily yields that $\mathcal{H}:L_p(\mathbb{R})\to L_p(\mathbb{R}).$ Now, if $E$ is a Banach interpolation space for the couple $(L_p,L_q),$ $1<p<q<\infty,$ then $\mathcal{H}:E(\mathbb{R})\to E(\mathbb{R})$ (this argument partly explains why Boyd's approach in \cite{B} has since become the mainstay of Interpolation Theory in symmetric function spaces, see \cite{BSh, KPS, LT}). A careful inspection of the proof in \cite{B} further yields that, in this case, $E(\mathbb{R})$ is the optimal receptacle for $\mathcal{H}$ restricted on $E(\mathbb{R}).$

In the case $p=1,$ Kolmogorov's classical result \cite{Kol} asserts that the operator $\mathcal{H}$ maps the symmetric Banach function space $L_1(\mathbb{R})$ to the symmetric {\it quasi-Banach} space $L_{1,\infty}(\mathbb{R})$ (see \cite[Theorem III.4.9 (b), p. 139]{BSh}). From this, one might guess that, for $E=L_1(\mathbb{R}),$ the optimal range of $\mathcal{H}$ is $L_{1,\infty}(\mathbb{R}),$ which strongly suggests that the natural setting for %Problem \ref{main problem}
problem is that of symmetric quasi-Banach spaces.

Actually we study the optimal range of the operator-valued Hilbert transform $1\otimes\mathcal{H}.$ Let $(\mathcal{M},\tau)$ be a von Neumann algebra with a normal semifinite faithful trace $\tau.$ We shall discuss the optimal range of $1\otimes\mathcal{H}$ in the setting of symmetric quasi-Banach spaces on the von Neumann algebra tensor product $\mathcal{M}\bar{\otimes} L_{\infty}(\mathbb{R}),$ which are non-commutative counterparts of the classical symmetric function spaces (see Section 2 for precise definition).
Let $\mathcal{M}\bar{\otimes} L_{\infty}(\mathbb{R})$ be a von Neumann algebra tensor product with a normal semifinite faithful tensor trace. Define $1\otimes \mathcal{H}$ on $\Lambda_{\log}(\mathcal{M}\bar{\otimes}L_{\infty}(\mathbb{R}))$ (see Subsection 2.7 for detailed explanation) as a non-commutative Hilbert transform.
%acting on functions taking values in a von Neumann algebra $\mathcal{M}.$

\begin{problem}\label{main problem} Let $\mathcal{M}$ be a semifinite von Neumann algebra equipped with a faithful normal semifinite trace $\tau.$ Given a symmetric quasi-Banach function space $E=E(\mathbb{R}),$ determine the least symmetric quasi-Banach function space $F=F(\mathbb{R})$ such that
 $1\otimes \mathcal{H}$ maps $\mathcal{E}(\mathcal{M}\otimes L_{\infty}(\mathbb{R}))$ to $\mathcal{F}(\mathcal{M}\otimes L_{\infty}(\mathbb{R})).$
\end{problem}

% We will refer to the space $F(\mathcal{M}\otimes L_{\infty}(\mathbb{R}))$ (resp. $F(H)$) as the optimal range space for the operator $1\otimes \mathcal{H}$ (resp. $T$) restricted to the domain  $E(\mathcal{M}\otimes %L_{\infty}(\mathbb{R}))$ (resp. $E(H)$).

In the special case, when $\mathcal {M}=\mathbb{C}$, Problem \ref{main problem} has great lineage in Mathematical Analysis and Operator Theory as discussed above.  Addressing precisely this framework, one of our main results, Theorem \ref{quasi-banach opt range}, provides a complete description of the optimal range $F$ for a given symmetric quasi-Banach space $E,$ thereby complementing \cite[Theorem 2.1]{B}. Furthermore, Proposition \ref{optim weak l1} refines Kolmogorov's classical result \cite[Theorem III.4.9 (b), p. 139]{BSh}, by showing that the optimal range for the Hilbert transform on $L_1(\mathbb{R})$ is the space $(L_{1,\infty})^{0},$ the closure of all bounded functions in $L_{1,\infty}(\mathbb{R}).$ Finally, Theorem \ref{opt. range th H} resolves Problem \ref{main problem} in full generality.

The classical triangular truncation operator $T$ is defined in \cite{GK1,GK2} on integral operators (on the Hilbert space $L_2(\mathbb{R})$) by the formula
\begin{equation}\label{T-oper1}
(T(V)x)(t)=\int_{\mathbb{R}}K(t,s){\rm sgn}(t-s)x(s)ds,\quad x\in L_2(\mathbb{R}),
\end{equation}
where $K$ is the kernel of the integral operator $V,$
$$(Vx)(t)=\int_{\mathbb{R}}K(t,s)x(s)ds,\quad x\in L_2(\mathbb{R}).$$
We describe the exact domain of operator $T$ in the Remark \ref{T dom} below.
%Focusing now on non-commutative setting, Problem \ref{second problem}, we recall that
The class of Banach ideals $\mathcal{E}=\mathcal{E}(H)$ in $B(H)$ (equipped with Fatou norm) such that $T(\mathcal{E})\subseteq \mathcal{E}$ was discussed at great length
by I.C. Gohberg and M.G. Krein \cite{GK1,GK2}.
For an operator superficially similar to $T,$  J. Arazy (see \cite[Theorem 4.1]{Ar})
characterized all ideals $\mathcal{E}$ matching completely the above Boyd's results for the case
%that is the operator $T$ is bounded in $E(H)$ if and only if
when the Boyd indices of $\mathcal{E}$ are not trivial.
%Furthermore, for a triangular truncation operator that differs subtly from our own, J. Arazy (see %\cite[Theorem 4.1]{Ar}) characterized all such ideals $E$ matching completely \cite[Theorem 3.7 ]{B}.
However, even if $\mathcal{E}$ is an interpolation space for the couple $(\mathcal{L}_p,\mathcal{L}_q),$ $1<p<q<\infty,$
the techniques employed by Arazy, Gohberg and Krein \cite{Ar,GK1,GK2}, and in many other relevant papers, do not yield insight into whether $\mathcal{E}(H)$ is the optimal range space for the operator $T$ restricted to $\mathcal{E}(H)$. Thus, finding the optimal range space for the operator $T$
%Problem \ref{second problem}
is of particular interest even in the simplest case, namely when the ideal $\mathcal{E}$ has non-trivial Boyd indices. A classical Macaev's result (see \cite{GK1,GK2,Mac}) and the Gohberg-Krein Theorem (see \cite[Theorem VII.5.1,  p. 345]{GK2}) suggest that the natural receptacle for the operator $T$ restricted to the trace class $\mathcal{L}_1(H)$ is the quasi-Banach ideal $\mathcal{L}_{1,\infty}(H)$ and, as is the case for Problem \ref{main problem}, this motivates our consideration of quasi-Banach ideals.
% when addressing Problem \ref{second problem}.

\begin{problem}\label{second problem} Given a symmetric quasi-Banach sequence space $E=E(\mathbb{Z}),$ determine the least symmetric quasi-Banach sequence space $F=F(\mathbb{Z})$ such that
$T:\mathcal{E}(H)\rightarrow \mathcal{F}(H).$
\end{problem}

Our main result concerning Problem 2 is presented in Theorem \ref{opt. range th T}. It matches completely the commutative results cited above and substantially extends earlier results from \cite[Theorem III.2.1, 2.2, 4.1, and 5.3, pp. 83-116]{GK2} (see also \cite[Corollary III. 7.1, p. 130]{GK2}).
%It provides a perfectly analogous result to our commutative setting and substantially extends earlier results from \cite[Theorem III.2.1, 2.2, 4.1, and 5.3, pp. 83-116]{GK} (see also \cite[Corollary III. 7.1, p. 130]{GK}.
The new components of our proof, which enable us to obtain optimal range results in Theorem \ref{opt. range th T}, are provided by Theorems \ref{first main theorem} and \ref{T theorem}, where the latter may be viewed as a non-commutative extension of Boyd's result given in \cite[Proposition III.4.10, p. 140]{BSh}.

The most notable achievement of this paper is our approach, which allows the simultaneous treatment of Problems 1 and 2 from the perspective of non-commutative symmetric spaces. The expert reader can skip to Sections \ref{statement section} and \ref{statement proof} for the statement of our main result Theorem \ref{first main theorem} and its proof. Following this, Section 5 is devoted to producing a lower estimate for the triangular truncation operator $T.$

In Section 6 we find the optimal range for the Calder\'{o}n operator $S.$ In Section 7 we fully resolve Problems \ref{main problem} and \ref{second problem} by explicitly formulating the optimal symmetric quasi-Banach range for the operators $1\otimes\mathcal{H}$ and $T$. Finally, our approach enables important applications to Double Operator Integrals associated with Lipschitz functions and commutator estimates, which we address in Section 8. We are able to deal with the most general assumptions on a quasi-Banach symmetric space $E$ omitting the assumption that it has a Fatou norm used in \cite{Ar, BSh,B, GK1, GK2} and many other sources.

Our final comment is to, on the one hand, connect our theme with the studies of UMD-spaces in the setting of symmetric operator spaces and, on the other hand, explain the profound difference with the latter. Recall that one of the equivalent characterizations of the so-called UMD-property of a Banach space $X$ is that \cite{Bour1, Bur} the Hilbert kernel for the real line
$\mathbb{R}$  defines a bounded convolution operator on the corresponding Bochner $L_{p}$-spaces ($1<p<\infty$) of $X$-valued functions.
In particular, Theorem 4.1 due to Zsido \cite{Zs} implies almost immediately that the spaces
$\mathcal{L}_p(\mathcal{M}, \tau), 1<p<\infty$, have the
UMD-property (see details in \cite[Corollary 4.2]{DDPS} and also \cite{BGM1}).  Equivalently, we see that $1\otimes \mathcal{H}$ maps $\mathcal{L}_p(\mathcal{M}\bar{\otimes }L_{\infty}(\mathbb{R}))$ to $\mathcal{L}_p(\mathcal{M}\bar{\otimes} L_{\infty}(\mathbb{R}))$ whenever $1<p<\infty$ and this connection with Problem \ref{main problem} has been employed in the studies of various versions of the non-commutative Hilbert Transform for which we refer to \cite{DDdePS, DDPS, Ran1, Ran2}. However, in the case when a symmetric space $E$ has trivial Boyd indices, it is not a UMD-space and therefore all the techniques from above cited papers are not applicable in the present setting.

\section{Preliminaries}

\subsection{Singular value functions}\label{s number section}

Let $(I,m)$ denote the measure space $I = (0,\infty),\mathbb{R}$ (resp. $I=\mathbb{Z}_+:=\mathbb{Z}_{\geq 0},\mathbb{Z}$), where $\mathbb{R}$ is the set of real (resp. $\mathbb{Z}$ the set of integer) numbers, equipped with Lebesgue measure (resp. counting measure) $m.$
 Let $L(I,m)$ be the space of all measurable real-valued functions (resp. sequences) on $I$ equipped with Lebesgue measure (resp. counting measure) $m$ i.e. functions which coincide almost everywhere are considered identical. Define $S(I,m)$ to be the subset of $L(I,m)$ which consists of all functions (resp. sequences) $x$ such that $m(\{t : |x(t)| > s\})$ is finite for some $s > 0.$

For $x\in S(I)$ (where $I = (0,\infty)$ or $\mathbb{R}$), we denote by $\mu(x)$ the decreasing rearrangement of the function $|x|.$ That is,
$$\mu(t,x)=\inf\{s\geq0:\ m(\{|x|>s\})\leq t\},\quad t>0.$$
 On the other hand, if $I = \mathbb{Z}_+, \mathbb{Z}$, and $m$ is the counting measure, then $S(I) = \ell_\infty (I)$, where $\ell_\infty (I)$ denotes the space of all bounded sequences on $I$. In this case, for a sequence $x=\{x_n\}_{n\geq0}$ in $\ell_{\infty}(\mathbb{Z}_{+})$ (resp. $\ell_{\infty}(\mathbb{Z})$), we denote by $\mu(x)$ the sequence $|x|=\{|x_{n}|\}_{n\geq0}$ rearranged to be in decreasing order.

We say that $y$ is submajorized by $x$ in the sense of Hardy--Littlewood--P\'{o}lya (written $y\prec\prec x$) if
$$\int_0^t\mu(s,y)ds\leq\int_0^t\mu(s,x)ds,\quad t\geq0$$
$$\Big(\text{or}\,\, \sum_{k=0}^{n}\mu(k,y)\leq\sum_{k=0}^{n}\mu(k,x),\quad n\geq0\Big).$$

Let $\mathcal{M}$ be a semifinite von Neumann algebra on a separable Hilbert space $H$ equipped with a faithful normal semifinite trace $\tau.$
A closed and densely defined operator $A$ affiliated with $\mathcal{M}$ is called $\tau$-measurable if $\tau(E_{|A|}(s,\infty))<\infty$ for sufficiently large $s.$ We denote the set of all $\tau$-measurable operators by
$S(\mathcal{M},\tau).$ Let $Proj(\mathcal{M})$ denote the lattice of all projections in $\mathcal{M}.$ For every $A\in S(\mathcal{M},\tau),$ we define its singular value function $\mu(A)$ by setting
$$\mu(t,A)=\inf\{\|A(1-P)\|_{L_{\infty}(\mathcal{M})}:P\in Proj(\mathcal{M}),\quad \tau(P)\leq t\}, \quad t>0.$$
Equivalently, for positive self-adjoint operators $A\in S(\mathcal{M},\tau),$ we have
$$n_A(s)=\tau(E_A(s,\infty)),\quad \mu(t,A)=\inf\{s: n_A(s)<t\}, \quad t>0.$$
For more details on generalised singular value functions, we refer the reader to \cite{FK} and \cite{LSZ}.
We have for $A,B \in S(\mathcal{M},\tau)$ (see for instance \cite[Corollary 2.3.16 (a)]{LSZ})
\begin{equation}\label{triangle svf}
\mu(t+s,A+B)\leq\mu(t,A)+\mu(s,B),\quad t,s>0.
\end{equation}

If $A,B\in S(\mathcal{M},\tau),$ then we say that $B$ is submajorized by $A$ (in the sense of Hardy--Littlewood--P\'{o}lya), denoted by $\mu(B)\prec\prec\mu(A),$ if
$$\int_0^t\mu(s,B)ds\leq\int_0^t\mu(s,A)ds,\quad t\geq0.$$
If $\mathcal{M}=B(H)$ and $\tau$ is the standard trace $Tr,$ then it is not difficult to see that
$S(\mathcal{M})=S(\mathcal{M},\tau)=\mathcal{M}$ (see \cite{LSZ,F} for more details).
In this case, for $A\in S(\mathcal{M},\tau),$ we have
$$\mu(n,A)=\mu(t,A), \,\,\,  t\in[n,n+1), \, n\in\mathbb{Z}_{+}.$$
The sequence $\{\mu(n,A)\}_{n\in\mathbb{Z}_{+}}$ is just the sequence of singular values of the operator $A\in B(H).$

\subsection{Symmetric (Quasi-)Banach Function and Operator Spaces}

\begin{definition}\label{Sym} We say that $(E,\|\cdot\|_E)$ is a symmetric (quasi-)Banach function (or sequence) space on $I$ if the following hold:
\begin{enumerate}[{\rm (a)}]
\item $E$ is a subset of $S(I,m);$
\item $(E,\|\cdot\|_E)$ is a (quasi-)Banach space;
\item If $x\in E$ and if $y\in S(I,m)$  are such that $|y|\leq|x|,$ then $y\in E$ and $\|y\|_E\leq\|x\|_E;$
\item If $x\in E$ and if $y\in S(I,m)$ are such that $\mu(y)=\mu(x),$ then $y\in E$ and $\|y\|_E=\|x\|_E.$
\end{enumerate}
\end{definition}
For the general theory of symmetric spaces, we refer the reader to \cite{BSh,KPS,LT}.

Let $\mathcal{M}$ be a semifinite von Neumann algebra on a Hilbert space $H$ equipped with a faithful normal semifinite trace $\tau.$ The von Neumann algebra
$\mathcal{M}$ is also denoted by $\mathcal{L}_{\infty}(\mathcal{M}),$ and for all $A\in \mathcal{L}_{\infty}(\mathcal{M}),$ $\|A\|_{\mathcal{L}_{\infty}(\mathcal{M})}:=\|A\|_{B(H)}$.
Moreover, $\|A\|_{\mathcal{L}_{\infty}(\mathcal{M})}=\|\mu(A)\|_{L_{\infty}(0,\infty)}=\mu(0,A)$, $A\in \mathcal{L}_{\infty}(\mathcal{M})$.

\begin{definition}\label{NC Sym} Let $\mathcal{E}(\mathcal{M},\tau)$ be a linear subset in $S(\mathcal{M},\tau)$ equipped with a complete (quasi-)norm $\|\cdot\|_{\mathcal{E}(\mathcal{M},\tau)}.$ %(respectively, $(E(B(H),Tr),\|\cdot\|_{E(\mathbb{H},Tr)})$)
We say that $\mathcal{E}(\mathcal{M},\tau)$ is a symmetric operator space (on $\mathcal{M}$, or in $S(\mathcal{M},\tau)$) if
%(respectively, $(B(H),Tr)$)
for $A\in \mathcal{E}(\mathcal{M},\tau)$ and for every $B\in S(\mathcal{M},\tau)$ with $\mu(B)\leq\mu(A),$ we have $B\in \mathcal{E}(\mathcal{M},\tau)$ and $\|B\|_{\mathcal{E}(\mathcal{M},\tau)}\leq\|A\|_{\mathcal{E}(\mathcal{M},\tau)}$.

A symmetric function (or sequence) space is the term reserved for a symmetric
operator space when $\mathcal{M}=L_{\infty}(I,m),$ where $I=(0,\infty), \mathbb{R}$ (or $\mathcal{M}=\ell_{\infty}(I)$ with counting measure, where $I=\mathbb{Z}_{+},\mathbb{Z}$).
\end{definition}

Recall the construction of a symmetric (quasi-)Banach operator space (or non-commutative symmetric (quasi-)Banach space) $\mathcal{E}(\mathcal{M},\tau).$ Let $E$ be a symmetric (quasi-)Banach function (or sequence) space on $(0,\infty)$ (or $\mathbb{Z}_{+}$). Set
$$\mathcal{E}(\mathcal{M},\tau)=\Big\{A\in S(\mathcal{M},\tau):\ \mu(A)\in E\Big\}.$$
We equip $\mathcal{E}(\mathcal{M},\tau)$ with a natural norm
$$\|A\|_{\mathcal{E}(\mathcal{M},\tau)}:=\|\mu(A)\|_E,\quad A\in \mathcal{E}(\mathcal{M},\tau).$$
This is a (quasi-)Banach space with the (quasi-)norm $\|\cdot\|_{\mathcal{E}(\mathcal{M},\tau)}$ and is called the (non-commutative) symmetric operator space associated with $(\mathcal{M},\tau)$ corresponding to $(E,\|\cdot\|_{E})$.
In particular, when $\mathcal{M}=B(H),$ we have
$$\mathcal{E}(H,Tr)=\Big\{A\in B(H):\ \mu(A)\in E(\mathbb{Z}_{+})\Big\}.$$
An extensive discussion of the various properties of such spaces can be found
in \cite{KS,LSZ}.
Futhermore, the following fundamental theorem was proved in \cite{KS} (see also \cite[Question 2.5.5, p. 58]{LSZ}).
\begin{thm} Let $E$ be a symmetric function (or sequence) space on $(0,\infty)$ (or $\mathbb{Z}_{+}$) and let $\mathcal{M}$ be a semifinite von Neumann algebra. Set
$$\mathcal{E}(\mathcal{M},\tau)=\Big\{A\in S(\mathcal{M},\tau):\ \mu(A)\in E\Big\}.$$
So defined $(\mathcal{E}(\mathcal{M},\tau),\|\cdot\|_{\mathcal{E}(\mathcal{M},\tau)})$
is a symmetric operator space.
\end{thm}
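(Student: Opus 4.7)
\emph{Plan.} I will verify in order: (1) that $\mathcal{E}(\mathcal{M},\tau)$ is a linear subspace of $S(\mathcal{M},\tau)$ on which $\|\cdot\|_{\mathcal{E}(\mathcal{M},\tau)}$ is a quasi-norm; (2) the rearrangement-invariance condition of Definition \ref{NC Sym}; (3) completeness. Steps (1) and (2) are straightforward consequences of axioms (a)--(d) of Definition \ref{Sym} combined with the submajorisation-type inequality \eqref{triangle svf}, while step (3) is where the quasi-Banach setting (rather than Banach) introduces the main technical difficulty.

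\emph{Linearity and quasi-norm.} Homogeneity under scalars is immediate from $\mu(\lambda A)=|\lambda|\mu(A)$ and axioms (c)--(d). For additivity, specialise \eqref{triangle svf} to $t=s$ to obtain $\mu(2t,A+B)\le\mu(t,A)+\mu(t,B)$, i.e.\ $\mu(A+B)(t)\le\mu(A)(t/2)+\mu(B)(t/2)$ pointwise. The dilation operator $\sigma_{1/2}\colon f\mapsto f(\,\cdot/2)$ is bounded on every symmetric quasi-Banach space (a consequence of the Aoki--Rolewicz theorem together with axioms (c)--(d) applied to suitable truncations of $f$). Hence $\mu(A+B)\in E$ and
\[
\|A+B\|_{\mathcal{E}(\mathcal{M},\tau)}\le C\bigl(\|A\|_{\mathcal{E}(\mathcal{M},\tau)}+\|B\|_{\mathcal{E}(\mathcal{M},\tau)}\bigr),
\]
where $C$ depends only on the quasi-triangle constant of $E$ and $\|\sigma_{1/2}\|_{E\to E}$. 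Positivity follows from faithfulness of $\tau$ (since $\mu(A)=0$ forces $A=0$). For (2), if $\mu(B)\le\mu(A)\in E$, axiom (c) yields $\mu(B)\in E$ with $\|\mu(B)\|_E\le\|\mu(A)\|_E$, so $B\in\mathcal{E}(\mathcal{M},\tau)$ with $\|B\|_{\mathcal{E}(\mathcal{M},\tau)}\le\|A\|_{\mathcal{E}(\mathcal{M},\tau)}$.

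\emph{Completeness — the main obstacle.} Let $\{A_n\}$ be $\|\cdot\|_{\mathcal{E}(\mathcal{M},\tau)}$-Cauchy. First, it is Cauchy in the measure topology of $S(\mathcal{M},\tau)$: for every fixed $t>0$, monotonicity of $\mu$ and axiom (c) give
\[
\|\chi_{(0,t)}\|_E\cdot\mu(t,A_n-A_m)\le\|\mu(A_n-A_m)\chi_{(0,t)}\|_E\le\|A_n-A_m\|_{\mathcal{E}(\mathcal{M},\tau)},
\]
so $\mu(t,A_n-A_m)\to 0$, which is exactly measure-convergence. By completeness of $S(\mathcal{M},\tau)$ in this topology, there exists $A\in S(\mathcal{M},\tau)$ with $A_n\to A$ in measure. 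It remains to show $A\in\mathcal{E}(\mathcal{M},\tau)$ and $\|A-A_n\|_{\mathcal{E}(\mathcal{M},\tau)}\to 0$. Passing to the Aoki--Rolewicz $p$-norm equivalent to $\|\cdot\|_{\mathcal{E}(\mathcal{M},\tau)}$, extract a subsequence $\{A_{n_k}\}$ with $\sum_k\|A_{n_{k+1}}-A_{n_k}\|^p_{\mathcal{E}(\mathcal{M},\tau)}<\infty$; then the telescoping series $A_{n_1}+\sum_k(A_{n_{k+1}}-A_{n_k})$ converges in $(\mathcal{E}(\mathcal{M},\tau),\|\cdot\|_{\mathcal{E}(\mathcal{M},\tau)})$ and, simultaneously, in measure to $A$, so $A\in\mathcal{E}(\mathcal{M},\tau)$. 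A tail estimate on the same $p$-summable series yields $\|A-A_{n_k}\|_{\mathcal{E}(\mathcal{M},\tau)}\to 0$, which combined with the original Cauchy condition gives $\|A-A_n\|_{\mathcal{E}(\mathcal{M},\tau)}\to 0$. The delicate point here is that in the quasi-Banach setting one cannot appeal to Fatou-type lower semicontinuity of $\|\cdot\|_E$; the Aoki--Rolewicz reduction is the device that replaces it.
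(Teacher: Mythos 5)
Your steps (1)--(2) are essentially correct, but they are the easy part of the statement. The genuine gap is in the completeness argument: after extracting a subsequence with $\sum_k\|A_{n_{k+1}}-A_{n_k}\|_{\mathcal{E}(\mathcal{M},\tau)}^p<\infty$ you assert that ``the telescoping series $A_{n_1}+\sum_k(A_{n_{k+1}}-A_{n_k})$ converges in $(\mathcal{E}(\mathcal{M},\tau),\|\cdot\|_{\mathcal{E}(\mathcal{M},\tau)})$''. That assertion is precisely what has to be proved: in a quasi-normed space (equivalently, after Aoki--Rolewicz, a $p$-normed space), ``every $p$-absolutely summable series converges'' is \emph{equivalent} to completeness, so invoking it here is circular; Aoki--Rolewicz only renormalizes, it does not produce the limit. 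What is actually required is to show directly that the measure-limit $A$ satisfies $\mu(A-A_{n_1})\in E$ and that the tails tend to zero in $\|\cdot\|_{\mathcal{E}(\mathcal{M},\tau)}$, and for that one must control the singular value function of an infinite sum. The standard device (it is exactly how the paper handles the analogous completeness question for $F(0,\infty)$ in Theorem \ref{quasi-banach opt range}) is the estimate $\mu\bigl(\sum_k x_k\bigr)\le\sum_k\sigma_{2^k}\mu(x_k)$ of Lemma \ref{measure conv} (whose operator version follows from \eqref{triangle svf} in the same way), combined with the dilation bound $\|\sigma_{2^k}y\|_E\le c_E^k\|y\|_E$ of \cite[Remark 18]{F}. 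One must then choose the subsequence with $\|A_{n_{k+1}}-A_{n_k}\|_{\mathcal{E}(\mathcal{M},\tau)}\le\varepsilon^k$ for some $\varepsilon<c_E^{-1}$ (mere $p$-summability is not enough, since the dilation constants $c_E^{kp}$ grow geometrically), verify that the majorant series is Cauchy in $E$ and that its $E$-limit coincides a.e.\ with the pointwise monotone sum (which identifies it with a majorant of $\mu(A-A_{n_1})$), and finally run the same estimate on the tails to obtain norm convergence. None of this is in your write-up, and your closing claim that ``the Aoki--Rolewicz reduction is the device that replaces'' the missing Fatou property is not correct as it stands.

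Two further remarks. First, the paper offers no proof of this theorem: it is quoted from \cite{KS} (see also \cite{F} for completeness in the quasi-normed setting), and the substantial content there is stronger than what your outline targets. When $E$ is a symmetric \emph{Banach} function space, the cited result asserts that $\|A\|_{\mathcal{E}(\mathcal{M},\tau)}=\|\mu(A)\|_E$ satisfies the genuine triangle inequality (constant $1$), with no Fatou or order-continuity assumption on $E$; this is delicate because $\mu(A+B)\le\mu(A)+\mu(B)$ fails pointwise and only the submajorization $\mu(A+B)\prec\prec\mu(A)+\mu(B)$ is available. Your route via $\mu(A+B)\le\sigma_2\mu(A)+\sigma_2\mu(B)$ and boundedness of dilations yields only a quasi-triangle inequality, so it does not recover the Banach-case assertion that $\|\cdot\|_{\mathcal{E}(\mathcal{M},\tau)}$ is a norm. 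Second, minor points: your inequality $\|\chi_{(0,t)}\|_E\,\mu(t,A_n-A_m)\le\|A_n-A_m\|_{\mathcal{E}(\mathcal{M},\tau)}$ tacitly uses that $\chi_{(0,t)}\in E$ for every $t>0$ (true for any nonzero symmetric quasi-Banach space, but worth a line), and the operator you call $\sigma_{1/2}$ is the paper's $\sigma_2$ in the notation of Section 2.
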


 The (quasi-)norm on $E$ is order continuous in the sense that $0\leq x_\beta\downarrow_{\beta}0$ in $E$ implies that $\|x_{\beta}\|_{E}\downarrow 0$, and in this case, the (quasi-)norm is order continuous on the  non-commutative space $\mathcal{E}(\mathcal{M},\tau)$ (see \cite[Chapter IV]{DdePS} for more details).
For simplicity, throughout this paper we denote $\mathcal{E}(\mathcal{M},\tau)$ (resp. $\mathcal{E}(H,Tr)$) by $\mathcal{E}(\mathcal{M})$ (resp. $\mathcal{E}(H)$).

For $1\leq p<\infty,$ we set
$$\mathcal{L}_p(\mathcal{M})=\{A\in S(\mathcal{M},\tau):\ \tau(|A|^p)<\infty\},\quad \|A\|_{\mathcal{L}_p(\mathcal{M})}=(\tau(|A|^p))^{\frac1p}.$$
The Banach spaces $(\mathcal{L}_p(\mathcal{M}),\|\cdot\|_{\mathcal{L}_p(\mathcal{M})})$ ($1\leq p<\infty$) are separable.
Moreover, when $p=2$ this space
becomes Hilbert space with the inner product
$$<A,B>:=\tau(B^{*}A),\,\, A,B\in \mathcal{L}_{2}(\mathcal{M}),$$  where $B^{*}$ is adjoint operator of $B.$
It is easy to see that these spaces are symmetric spaces. In particular, when $\mathcal{M}=B(H),$ we denote $\mathcal{L}_p(\mathcal{M})$ by $\mathcal{L}_p(H).$

The dilation operator on $S(0,\infty)$ is defined by
$$\sigma_{s}x(t):=x\left(\frac{t}{s}\right), \,\,\,\ s>0.$$
%Let $E(\mathbb{Z}_{+})$ (or $E(\mathbb{Z})$) be a symmetric space of sequences. For any sequence $a\in E(\mathbb{Z}_{+})$ (or $a\in E(\mathbb{Z}),$) the decreasing rearrangement will be denoted by $\mu(a).$
It is obvious that the dilation operator $\sigma_{s}$ is continuous in $S(0,\infty)$ (see \cite[Chapter II.3, p. 96]{KPS}).
The discrete dilation operator $\sigma_m,$ $m\in\mathbb{N},$ on $\ell_{\infty}(\mathbb{Z}_{+})$ is defined by
%\begin{equation}\label{dis dilation op}
$$\sigma_{m}(a):=\{\underbrace{a(0),a(0),...,a(0)}_{m \,\ \text{terms}},\underbrace{a(1),a(1),...,a(1)}_{m \,\ \text{terms}},...,\underbrace{a(n),a(n),...,a(n)}_{m \,\ \text{terms}},...\}.$$
%\end{equation}

\subsection{K\"{o}the dual of Symmetric Function and Operator spaces}\label{Kote dual}

Next we define the K\"{o}the dual space of symmetric Banach function spaces.
Given a symmetric Banach function (or sequence) space $E$  on $(0,\infty)$ (or $\mathbb{Z}_{+}$), equipped
with Lebesgue measure $m$ (or counting measure) the K\"{o}the dual space $E^\times$ on $(0,\infty)$ (or $\mathbb{Z}_{+}$) is defined by
$$E(0,\infty)^\times=\left\{y\in S(0,\infty):\int_{0}^{\infty}|x(t)y(t)|dt<\infty, \,\ \forall x\in E(0,\infty)\right\}$$
$$\left( \text{or} \,\,\ E(\mathbb{Z}_{+})^\times=\left\{y\in \ell_{\infty}(\mathbb{Z}_{+}):\sum_{k=0}^{\infty}|x(k)y(k)|<\infty, \,\ \forall x\in E(\mathbb{Z}_{+})\right\}\right).$$
The space $E^\times$ is Banach with the norm
\begin{equation}\label{kote dual norm}\|y\|_{E(0,\infty)^\times}:=\sup\left\{\int_{0}^{\infty}|x(t)y(t)|dt:x\in E(0,\infty), \,\ \|x\|_{E(0,\infty)}\leq1\right\}
\end{equation}
$$\left( \text{or} \,\, \|y\|_{E(\mathbb{Z}_{+})^\times}:=\sup\left\{\sum_{k=0}^{\infty}|x(k)y(k)|:x\in E(\mathbb{Z}_{+}), \,\ \|x\|_{E(\mathbb{Z}_{+})}\leq1\right\}\right).
$$

Let $E$ and $F$ be symmetric spaces on $(0,\infty)$ (or $\mathbb{Z}_{+}$). The following important properties of K\"{o}the duals of symmetric spaces $E$ and $F$ follow from Lozanovski\v{i}'s work \cite{Loz}
\begin{equation}\label{sum and inter dual}
(E+F)^{\times}=E^{\times}\cap F^{\times}, \, (E\cap F)^{\times}=E^{\times}+F^{\times}.
\end{equation}

If $E$ is a symmetric Banach function (or sequence) space, then $(E^{\times},\|\cdot\|_{E^{\times}})$ is also a symmetric Banach function (or sequence) space (cf. \cite[Section 2.4]{BSh}). For more details on K\"{o}the duality we refer to \cite{BSh,LT}.

Next we give the definition of  K\"{o}the dual space of non-commutative symmetric spaces. We assume that $E$ is a symmetric Banach function (resp. sequence) space
on $(0,\infty)$ (resp. $\mathbb{Z}_{+}$).
\begin{definition} \label{duality} The K\"{o}the dual space $\mathcal{E}(\mathcal{M},\tau)^\times$ of $\mathcal{E}(\mathcal{M},\tau)$ is defined
$$\mathcal{E}(\mathcal{M},\tau)^\times=\{A\in S(\mathcal{M},\tau):AB\in L_{1}(\mathcal{M},\tau), \,\, \forall B\in \mathcal{E}(\mathcal{M},\tau)\}.$$
\end{definition}
It is clear that $\mathcal{E}(\mathcal{M},\tau)^\times$ is a linear subspace of $S(\mathcal{M},\tau).$
Moreover, it is a normed space with the norm defined by setting
$$\|A\|_{\mathcal{E}(\mathcal{M},\tau)^\times}:=\sup\{|\tau(AB)|: \, B\in \mathcal{E}(\mathcal{M},\tau), \, \|B\|_{\mathcal{E}(\mathcal{M},\tau)}\leq 1\}.$$
It can be shown that the normed space $(\mathcal{E}(\mathcal{M},\tau)^\times,\|\cdot\|_{\mathcal{E}(\mathcal{M},\tau)^\times})$ is complete with respect to $\|\cdot\|_{\mathcal{E}(\mathcal{M},\tau)^\times}$
%and $$\label{Embed}(\mathcal{L}_1\cap\mathcal{L}_{\infty})(\mathcal{M})\subset %\mathcal{E}(\mathcal{M},\tau)^{\times}\subset %(\mathcal{L}_1+\mathcal{L}_{\infty})(\mathcal{M}),$$
%with continuous embeddings
(see for instance \cite[Proposition 5.2 (x) and 5.4 ]{DDdeP1}).
The important result for the identification of the K\"{o}the dual $ \mathcal{E}(\mathcal{M},\tau)^\times$ is that if $E$ is a symmetric Banach function space on $(0,\infty)$ (or $\mathbb{Z}_{+}$) with K\"{o}the dual space $ E^{\times},$
then $\mathcal{E}(\mathcal{M},\tau)^\times=\mathcal{E}^{\times}(\mathcal{M},\tau)$ (with equality of norms) (see for instance \cite[Theorem 5.6]{DDdeP1}). For more details on non-commutative K\"{o}the dual spaces we refer the reader to
\cite{DdeP,DDdeP1},\cite[Chapter IV]{DdePS}.
\begin{proposition}\label{Holder} Let $\mathcal{E}(\mathcal{M})$ be a symmetric Banach operator space with K\"{o}the dual $\mathcal{E}(\mathcal{M})^{\times}.$ If $X\in\mathcal{ E}(\mathcal{M})$ and $Y\in \mathcal{E}(\mathcal{M})^{\times},$ then $XY\in \mathcal{L}_{1}(\mathcal{M})$ and
%\begin{equation}
$$\tau(|XY|)\leq \|X\|_{\mathcal{E}(\mathcal{M})}\cdot\|Y\|_{\mathcal{E}(\mathcal{M})^{\times}}.$$
%\end{equation}
\end{proposition}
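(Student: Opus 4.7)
My plan is to combine polar decomposition with the trace-duality formula that defines the K\"othe dual norm. The first task is to show that $XY$ is indeed in $\mathcal{L}_1(\mathcal{M})$. The K\"othe dual definition only provides $YB\in\mathcal{L}_1(\mathcal{M})$ when $Y$ sits on the \emph{left}, so one cannot immediately plug in $B=X$ to conclude $XY\in\mathcal{L}_1(\mathcal{M})$. To circumvent this, I would invoke the identification $\mathcal{E}(\mathcal{M})^{\times}=\mathcal{E}^{\times}(\mathcal{M})$ stated just before the proposition: it forces $\mathcal{E}(\mathcal{M})^{\times}$ to be a symmetric operator space in its own right, and hence closed under the adjoint since $\mu(Y^{*})=\mu(Y)$. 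Therefore $Y^{*}\in\mathcal{E}(\mathcal{M})^{\times}$ and $X^{*}\in\mathcal{E}(\mathcal{M})$, so the defining property of the K\"othe dual yields $Y^{*}X^{*}\in\mathcal{L}_1(\mathcal{M})$. Taking adjoints, $XY=(Y^{*}X^{*})^{*}\in\mathcal{L}_1(\mathcal{M})$.

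For the norm bound, I would take the polar decomposition $XY=U|XY|$, where $U$ is a partial isometry whose support satisfies $U^{*}U=s(|XY|)$. Then $U^{*}(XY)=U^{*}U|XY|=|XY|$, so
\begin{equation*}
\tau(|XY|)=\tau(U^{*}XY)=\tau\bigl(Y(U^{*}X)\bigr),
\end{equation*}
where the second equality is trace cyclicity, legitimate because both $(U^{*}X)Y=U^{*}(XY)$ and $Y(U^{*}X)$ lie in $\mathcal{L}_1(\mathcal{M})$ (the latter once more by the K\"othe dual definition, now applied to $U^{*}X\in\mathcal{E}(\mathcal{M})$). Next, $U^{*}X$ is the product of a contraction with $X\in\mathcal{E}(\mathcal{M})$, so by the ideal property of symmetric operator spaces $\mu(U^{*}X)\leq\mu(X)$, and hence $\|U^{*}X\|_{\mathcal{E}(\mathcal{M})}\leq\|X\|_{\mathcal{E}(\mathcal{M})}$. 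Feeding $B=U^{*}X$ into the defining supremum of $\|Y\|_{\mathcal{E}(\mathcal{M})^{\times}}$ then produces
\begin{equation*}
\tau(|XY|)=\bigl|\tau\bigl(Y\cdot U^{*}X\bigr)\bigr|\leq \|Y\|_{\mathcal{E}(\mathcal{M})^{\times}}\cdot\|U^{*}X\|_{\mathcal{E}(\mathcal{M})}\leq \|X\|_{\mathcal{E}(\mathcal{M})}\cdot\|Y\|_{\mathcal{E}(\mathcal{M})^{\times}},
\end{equation*}
which is the claim.

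The main obstacle is not the final chain of inequalities but the bookkeeping that ensures $XY\in\mathcal{L}_1(\mathcal{M})$ at all and that the cyclic rearrangement of the trace is legitimate. Both rest on the fact---cited just above from \cite{DDdeP1, DdePS}---that the K\"othe dual coincides with the non-commutative symmetric space associated to the classical K\"othe dual, and in particular is closed under adjoints; once this is taken as input, the remaining steps reduce to standard manipulations of polar decomposition, the ideal property of $\mathcal{E}(\mathcal{M})$, and trace cyclicity for measurable operators whose products lie in $\mathcal{L}_1(\mathcal{M})$.
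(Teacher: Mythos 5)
Your proof is correct, but note that the paper does not actually prove this proposition: it is quoted as a standard fact, with the surrounding text pointing to \cite{DDdeP1}, where the usual derivation goes through the Fack--Kosaki submajorization $\mu(XY)\prec\prec\mu(X)\mu(Y)$ combined with the commutative H\"older inequality for the couple $(E,E^{\times})$ and the norm identity $\|Y\|_{\mathcal{E}(\mathcal{M})^{\times}}=\|\mu(Y)\|_{E^{\times}}$. Your route is genuinely different: you get the membership $XY\in\mathcal{L}_{1}(\mathcal{M})$ by passing to adjoints, which does require knowing that $\mathcal{E}(\mathcal{M})^{\times}$ is closed under the adjoint --- correctly extracted from the identification $\mathcal{E}(\mathcal{M})^{\times}=\mathcal{E}^{\times}(\mathcal{M})$ quoted just before the proposition, and this detour is genuinely needed, since for merely $\tau$-measurable operators integrability of $AB$ does not by itself give integrability of $BA$ --- and you then obtain the numerical bound from the polar decomposition, the ideal property $\mu(U^{*}X)\leq\mu(X)$, and the defining supremum of the dual norm. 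Two ingredients should be made explicit with citations: (1) the tracial commutation $\tau\left((U^{*}X)Y\right)=\tau\left(Y(U^{*}X)\right)$, which for measurable operators with both products integrable is a nontrivial lemma of Brown--Kosaki type (also available in \cite{DDdeP1}); and (2) the identification with equality of norms, which you already invoke. If you prefer to avoid the cyclicity lemma, you can instead write $\tau(U^{*}XY)=\overline{\tau(Y^{*}X^{*}U)}$ and feed $X^{*}U$ into the dual-norm supremum for $Y^{*}$, using $\|Y^{*}\|_{\mathcal{E}(\mathcal{M})^{\times}}=\|Y\|_{\mathcal{E}(\mathcal{M})^{\times}}$ from the same identification. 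Compared with the submajorization argument, your proof works directly from the trace-duality definition of the norm at the cost of invoking the full noncommutative K\"othe duality theorem of \cite{DDdeP1}; since the paper imports that theorem anyway, this is legitimate within its framework, whereas the submajorization route is more self-contained at the function-space level.
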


\subsection{$\mathcal{L}_{1}\cap \mathcal{L}_{\infty}$ and $\mathcal{L}_{1}+\mathcal{L}_{\infty}$ spaces}

Two examples below are of particular interest. Consider the separated topological vector space $S(0,\infty)$ consisting of all measurable functions $x$ such that $m(\{t : |x(t)| > s\})$ is finite for some $s > 0$ with the topology of convergence in measure. Then the spaces $L_{1}(0,\infty)$ and $L_{\infty}(0,\infty)$ are algebraically and topologically imbedded in the topological vector space $S(0,\infty)$, and so these spaces form a Banach couple (see \cite[Chapter I]{KPS} for more details). The space $(L_{1}\cap L_{\infty})(0,\infty)=L_{1}(0,\infty)\cap L_{\infty}(0,\infty)$ consists of all bounded summable functions $x$ on $(0,\infty)$ with the norm
$$\|x\|_{(L_{1}\cap L_{\infty})(0,\infty)}=\max\{\|x\|_{L_{1}(0,\infty)},\|x\|_{L_{\infty}(0,\infty)}\}, x\in(L_{1}\cap L_{\infty})(0,\infty).$$
The space $(L_{1}+L_{\infty})(0,\infty)=L_{1}(0,\infty)+ L_{\infty}(0,\infty)$ consists of functions which are sums of bounded measurable and summable functions $x\in S(0,\infty)$
equipped with the norm given by
$$\|x\|_{(L_{1}+L_{\infty})(0,\infty)}=\inf\{\|x_1\|_{L_{1}(0,\infty)}+\|x_2\|_{L_{\infty}(0,\infty)}:x=x_1+x_2, $$
$$\, x_{1}\in L_{1}(0,\infty), \, x_2\in L_{\infty}(0,\infty)\}.$$
For more details we refer the reader to \cite[Chapter I]{BSh},\cite[Chapter II]{KPS}.
We recall that that every symmetric Banach function space on $(0,\infty)$ (with respect to Lebesgue measure) satisfies
$$(L_1\cap L_{\infty})(0,\infty)\subset E(0,\infty)\subset (L_1+L_{\infty})(0,\infty)$$
equipped with the norm given by
with continuous embeddings (see for instance \cite[Theorem II. 4.1. p. 91]{KPS}).

We define the space $\mathcal{L}_{1}(\mathcal{M})+\mathcal{L}_{\infty}(\mathcal{M})$ as the class of those operators $A\in S(\mathcal{M},\tau)$ for which
\begin{eqnarray*}\begin{split}\|A\|_{\mathcal{L}_{1}(\mathcal{M})+\mathcal{L}_{\infty}(\mathcal{M})}
&:=\inf\{\|A_1\|_{\mathcal{L}_1(\mathcal{M})}+\|A_2\|_{\mathcal{L}_{\infty}(\mathcal{M})}:\\
&A=A_1+A_2,\, A_1\in \mathcal{L}_{1}(\mathcal{M}), A_2\in \mathcal{L}_{\infty}(\mathcal{M})\}<\infty.
\end{split}\end{eqnarray*}
\begin{thm}\cite[Theorem III. 9.16, p. 96]{DdePS}\label{s-calcul thm} If $A\in S(\mathcal{M},\tau),$ then
\begin{equation}\label{L+L}
\|A\|_{\mathcal{L}_{1}(\mathcal{M})+\mathcal{L}_{\infty}(\mathcal{M})}=\int_{0}^{1}\mu(t,A)dt, \,\,\,\,\, A\in \mathcal{L}_{1}(\mathcal{M})+\mathcal{L}_{\infty}(\mathcal{M}).
\end{equation}
\end{thm}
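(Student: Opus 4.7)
The plan is to prove the stated equality as two matching inequalities. The bound $\int_0^1 \mu(t, A)\,dt \le \|A\|_{\mathcal{L}_1(\mathcal{M}) + \mathcal{L}_\infty(\mathcal{M})}$ is the easy direction: for any decomposition $A = A_1 + A_2$ with $A_1 \in \mathcal{L}_1(\mathcal{M})$ and $A_2 \in \mathcal{L}_\infty(\mathcal{M})$, taking $s = 0$ in \eqref{triangle svf} yields $\mu(t, A) \le \mu(t, A_1) + \|A_2\|_\infty$; integration over $[0, 1]$ gives $\int_0^1 \mu(t, A)\,dt \le \|A_1\|_1 + \|A_2\|_\infty$, and passing to the infimum over decompositions establishes the inequality.

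For the reverse bound, I would construct a nearly optimal decomposition via spectral calculus. Write $A = U|A|$ for the polar decomposition and set $n(u) := \tau(E_{|A|}(u,\infty))$. By definition $\mu(1, A) = \inf\{u : n(u) \le 1\}$ and $n$ is right-continuous, so for every $s > \mu(1, A)$ the projection $q := E_{|A|}(s,\infty)$ has $\tau(q) = n(s) \le 1$. Setting
\begin{equation*}
A_1 := U(|A| - s)q, \qquad A_2 := U\bigl(|A|(1-q) + sq\bigr),
\end{equation*}
one has $A = A_1 + A_2$; by functional calculus $|A|(1-q)$ has spectrum in $[0, s]$ and $U$ is isometric on the relevant supports, so $\|A_2\|_\infty = s$. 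Since $(|A|-s)q \ge 0$ with $E_{(|A|-s)q}(u, \infty) = E_{|A|}(s+u, \infty)$ for $u > 0$, the trace-integral identity $\tau(B) = \int_0^\infty \tau(E_B(u, \infty))\,du$ for positive $B \in S(\mathcal{M},\tau)$ yields
\begin{equation*}
\|A_1\|_1 = \tau\bigl((|A|-s)q\bigr) = \int_s^\infty n(u)\,du,
\end{equation*}
and therefore $\|A\|_{\mathcal{L}_1(\mathcal{M}) + \mathcal{L}_\infty(\mathcal{M})} \le s + \int_s^\infty n(u)\,du$.

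To close the loop, I would let $s \downarrow \mu(1, A)$. By monotone convergence the right-hand side tends to $\mu(1, A) + \int_{\mu(1, A)}^\infty n(u)\,du$. A standard layer-cake computation shows $\int_0^1 \mu(t, A)\,dt = \int_0^\infty \min(1, n(u))\,du$; splitting the integral at $u = \mu(1, A)$ and using $n(u) > 1$ for $u < \mu(1, A)$ together with $n(u) \le 1$ for $u \ge \mu(1, A)$ gives exactly $\mu(1, A) + \int_{\mu(1, A)}^\infty n(u)\,du$, matching the limit and proving the reverse inequality.

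The main technical point is the Fubini-style identity $\|A_1\|_1 = \int_s^\infty n(u)\,du$, which reduces to the spectral-theoretic fact $\tau(B) = \int_0^\infty \tau(E_B(u,\infty))\,du$ for positive $\tau$-measurable $B$ applied to $B = (|A|-s)q$. The only real subtlety is the possible atom of the spectral distribution at $\mu(1, A)$, which would obstruct any attempt to realize the infimum exactly; this is bypassed cleanly by approximating from above ($s \downarrow \mu(1, A)$), so that only the infimum itself, not its attainment, enters the argument.
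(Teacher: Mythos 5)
Your proof is correct. The paper offers no argument of its own for this statement---it is quoted verbatim from \cite{DdePS}---and your two-sided estimate, with the near-optimal spectral decomposition $A=U(|A|-s)E_{|A|}(s,\infty)+U\bigl(|A|(1-q)+sq\bigr)$ and the limit $s\downarrow\mu(1,A)$, is precisely the standard $K$-functional computation behind the cited result. Two cosmetic remarks: you only need (and in fact only use) $\|A_2\|_{\mathcal{L}_\infty(\mathcal{M})}\le s$ rather than equality, and the finiteness of $\int_s^\infty n(u)\,du$ for $A\in(\mathcal{L}_1+\mathcal{L}_\infty)(\mathcal{M})$ is supplied by your easy direction together with the layer-cake identity, which is worth saying explicitly.
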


A standard argument shows that $\mathcal{L}_{1}(\mathcal{M})+\mathcal{L}_{\infty}(\mathcal{M})$ is a Banach space with respect to this norm, which is continuously embedded in $S(\mathcal{M},\tau).$ It follows from Theorem \ref{s-calcul thm} that if $A\in S(\mathcal{M},\tau),$ then $A\in
\mathcal{L}_{1}(\mathcal{M})+\mathcal{L}_{\infty}(\mathcal{M})$ if and only if $\int_{0}^{t}\mu(s,A)ds<\infty$ for all $t>0.$

In particular, if $A\in S(\mathcal{M},\tau),$ then $A\in \mathcal{L}_{1}(\mathcal{M})+\mathcal{L}_{\infty}(\mathcal{M})$ if and only if $\mu(A)\in (L_{1}+L_{\infty})(0,\infty)$ and
$$\|A\|_{\mathcal{L}_{1}(\mathcal{M})+\mathcal{L}_{\infty}(\mathcal{M})}=\|\mu(A)\|_{(L_{1}+L_{\infty})(0,\infty)}, \,\,\,\,\, A\in \mathcal{L}_{1}(\mathcal{M})+\mathcal{L}_{\infty}(\mathcal{M}).$$
The space $\mathcal{L}_{1}(\mathcal{M})+\mathcal{L}_{\infty}(\mathcal{M})$ is also denoted by $(\mathcal{L}_{1}+\mathcal{L}_{\infty})(\mathcal{M}).$
Similarly, define the intersection space of Banach spaces $\mathcal{L}_{1}(\mathcal{M})$ and $\mathcal{L}_{\infty}(\mathcal{M})$ as follows
$$(\mathcal{L}_{1}\cap \mathcal{L}_{\infty})(\mathcal{M})=\{A\in S(\mathcal{M},\tau): \,\ \|A\|_{(\mathcal{L}_{1}\cap \mathcal{L}_{\infty})(\mathcal{M})}<\infty\}, $$
where the norm on $(\mathcal{L}_{1}\cap \mathcal{L}_{\infty})(\mathcal{M})$ is defined by
$$
\|A\|_{(\mathcal{L}_{1}\cap \mathcal{L}_{\infty})(\mathcal{M})}=\max\{\|A\|_{\mathcal{L}_{1}(\mathcal{M})},\|A\|_{\mathcal{L}_{\infty}(\mathcal{M})}\}, \,\,\  A\in (\mathcal{L}_{1}\cap \mathcal{L}_{\infty})(\mathcal{M}).
$$
It is easy to see that $(\mathcal{L}_{1}\cap \mathcal{L}_{\infty})(\mathcal{M})$ is a Banach spaces with respect to this norm. It should be observed that if $A\in S(\mathcal{M},\tau),$ then $A\in (\mathcal{L}_{1}\cap \mathcal{L}_{\infty})(\mathcal{M})$ if and only
if $\mu(A)\in (L_{1}\cap L_{\infty})(0,\infty).$ Moreover,
$$\|A\|_{(\mathcal{L}_{1}\cap \mathcal{L}_{\infty})(\mathcal{M})}=\|\mu(A)\|_{(L_{1}\cap L_{\infty})(0,\infty)}, \,\,\,\, A\in (\mathcal{L}_{1}\cap \mathcal{L}_{\infty})(\mathcal{M}). $$
For an in-depth description on these spaces, refer to \cite[Chapter III]{DdePS} (see also \cite{DDdeP1}).
As in the commutative case, every symmetric Banach operator space satisfies
\begin{equation}\label{imbed}
(\mathcal{L}_1\cap \mathcal{L}_{\infty})(\mathcal{M})\subset \mathcal{E}(\mathcal{M})\subset (\mathcal{L}_1+\mathcal{L}_{\infty})(\mathcal{M})
\end{equation}
(see \cite[Theorem 4.5, pp. 36-37]{DdePS} for more details).

\subsection{Lorentz spaces}\label{NC Lorentz}

\begin{definition}\cite[Definition II. 1.1, p.49]{KPS} A function $\varphi$ on the semiaxis $[0,\infty)$ is said to be quasiconcave if
\begin{enumerate}[{\rm (i)}]
         \item $\varphi(t)=0\Leftrightarrow t=0;$
          \item $\varphi(t)$ is positive and increasing for $t>0;$
         \item $\frac{\varphi(t)}{t}$ is decreasing for $t>0.$
\end{enumerate}
\end{definition}
Observe that every nonnegative concave function on $[0,\infty)$ that vanishes only at origin is quasiconcave. The reverse, however, is not always true.
But, we may replace, if necessary, a quasiconcave function $\varphi$ by its least concave majorant $\widetilde{\varphi}$ such that
$$\frac{1}{2}\widetilde{\varphi}\leq\varphi\leq\widetilde{\varphi}$$
(see \cite[Proposition 5.10, p. 71]{BSh}).

Let $\Omega$ denote the set of increasing concave functions $\varphi:[0,\infty)\rightarrow[0,\infty)$ for which $\lim_{t\rightarrow 0+}\varphi(t)=0$ (or simply $\varphi(+0)=0$).
For the function $\varphi$ in $\Omega,$ the Lorentz space $\Lambda_{\varphi}(0,\infty)$ is defined by setting
$$\Lambda_{\varphi}(0,\infty):=\left\{x\in S(0,\infty): \int_0^\infty\mu(s,x)d\varphi(s)<\infty\right\}$$
and equipped with the norm
$$\|x\|_{\Lambda_\varphi(0,\infty)}:=\int_0^\infty\mu(s,x)d\varphi(s).$$
The Lorentz sequence space $\Lambda_{\varphi}(\mathbb{Z}_{+})$ is
$$\Lambda_{\varphi}(\mathbb{Z}_{+}):=\left\{a\in c_{0}:\|a\|_{\Lambda_{\varphi}(\mathbb{Z}_{+})}=\sum_{n=0}^{\infty}\mu(n,a)(\varphi(n+1)-\varphi(n))<\infty\right\},$$
where $c_{0}$ is the space of sequences converging to zero. These spaces are examples of symmetric Banach function spaces.
In particular, if $\varphi(t):=\log(1+t) \, (t>0),$ then the Lorentz sequence space $\Lambda_{\log}(\mathbb{Z}_{+})$ is defined as follows:
$$\Lambda_{\log}(\mathbb{Z}_{+}):=\left\{a\in c_{0}:\|a\|_{\Lambda_{\log}(\mathbb{Z}_{+})}=\sum_{n=0}^{\infty}\frac{\mu(n,a)}{n+1}<\infty\right\}.$$
These spaces are defined similarly on $\mathbb{R}$ and $\mathbb{Z},$ respectively.
For more details on Lorentz spaces, we refer the reader to \cite[Chapter II.5]{BSh} and \cite[Chapter II.5]{KPS}.

As in the commutative case, for a function $\varphi$ in $\Omega$ define the corresponding non-commutative Lorentz space by setting
$$\Lambda_{\varphi}(\mathcal{M}):=\left\{A\in S(\mathcal{M},\tau): \int_0^\infty\mu(s,A)d\varphi(s)<\infty\right\}$$
equipped with the norm
\begin{equation}\label{Lphi}
\|A\|_{\Lambda_\varphi(\mathcal{M})}:=\int_0^\infty\mu(s,A)d\varphi(s).
\end{equation}
These operator spaces become symmetric operator spaces.
The Lorentz ideal $\Lambda_{\varphi}(H)$ (see \cite[Example 1.2.7, p. 25]{LSZ}) is
$$\Lambda_{\varphi}(H):=\left\{A\in \mathcal{K}(H):\|A\|_{\Lambda_{\varphi}(H)}=\sum_{n=0}^{\infty}\mu(n,A)(\varphi(n+1)-\varphi(n))<\infty\right\},$$
where $\mathcal{K}(H)$ is the ideal of compact operators on $H.$
If $\varphi(t):=\log(1+t)\, (t>0),$ then the corresponding Lorentz ideal $\Lambda_{\log}(H)$ is defined by
$$\Lambda_{\log}(H):=\left\{A\in \mathcal{K}(H):\|A\|_{\Lambda_{\log}(H)}=\sum_{n=0}^{\infty}\frac{\mu(n,A)}{n+1}<\infty\right\}.$$
This ideal contains all Schatten-von Neumann classes $\mathcal{L}_{p}(H)$ $(1\leq p<\infty).$ It corresponds to the double index $(\infty,1)$ on the Lorentz scale and is known as the Macaev ideal (see \cite{GK1}).

\subsection{Weak $L_{1}$ and $M_{1,\infty}$ spaces}\label{weak L1}

The weak-$\ell_1$ sequence space $\ell_{1,\infty}$ on $\mathbb{Z}_{+}$ (or $\mathbb{Z}$) is defined as
%\begin{equation}\label{weak l1 sec}
$$\ell_{1,\infty}:=\left\{a\in c_{0}:\mu(n,a)=O\left(\frac{1}{1+n}\right)\right\}.$$
%\end{equation}

Further, define the space $\mathcal{L}_{1,\infty}(\mathcal{M})$ by setting
$$\mathcal{L}_{1,\infty}(\mathcal{M})=\{A\in S(\mathcal{M},\tau):\ \sup_{t>0}t\mu(t,A)<\infty\},$$
and equip $\mathcal{L}_{1,\infty}(\mathcal{M})$ with the functional $\|\cdot\|_{\mathcal{L}_{1,\infty}(\mathcal{M})}$ defined by the formulae

\begin{equation}\label{weakL1}
\|A\|_{\mathcal{L}_{1,\infty}(\mathcal{M})}=\sup_{t>0}t\mu(t,A),\quad A\in \mathcal{L}_{1,\infty}(\mathcal{M}).
\end{equation}
For any $A,B\in \mathcal{L}_{1,\infty}(\mathcal{M}),$ it follows from \eqref{triangle svf} that
\begin{eqnarray*}
\begin{split}\|A+B\|_{\mathcal{L}_{1,\infty}(\mathcal{M})}=\sup_{t>0}t\mu(t,A+B)
\leq\sup_{t>0}t\left(\mu\left(\frac{t}{2},A\right)+\mu\left(\frac{t}{2},B\right)\right) \\
\leq\sup_{t>0}t\mu\left(\frac{t}{2},A\right)+\sup_{t>0}t\mu\left(\frac{t}{2},B\right)
=2\|A\|_{\mathcal{L}_{1,\infty}(\mathcal{M})}+2\|B\|_{\mathcal{L}_{1,\infty}(\mathcal{M})}.
\end{split}
\end{eqnarray*}
 That is, $\|\cdot\|_{\mathcal{L}_{1,\infty}(\mathcal{M})}$ is a quasi-norm. In fact, the space $(\mathcal{L}_{1,\infty}(\mathcal{M}),\|\cdot\|_{\mathcal{L}_{1,\infty}(\mathcal{M})})$ is quasi-Banach (see \cite[Section 7]{KS} or \cite{F}).
The ideal $\mathcal{L}_{1,\infty}(\mathcal{M})$ has the Fatou property. That is, if $A_n\in \mathcal{L}_{1,\infty}(\mathcal{M}),$ $\|A_n\|_{\mathcal{L}_{1,\infty}(\mathcal{M})}\leq 1$ and $A_n\rightarrow A$ in measure, then $A\in \mathcal{L}_{1,\infty}(\mathcal{M})$  and $\|A\|_{\mathcal{L}_{1,\infty}(\mathcal{M})}\leq 1$.

Let $E$ be a quasi-Banach symmetric sequence space on $\mathbb{Z}_{+}.$
Let $\ell_{1,\infty}(\mathbb{Z}_{+})$ be a weak-$\ell_1$ sequence space. Define corresponding weak-$\mathcal{L}_1$ ideal of compact operators on $H$
%\begin{equation}\label{weak l1 op}
$$\mathcal{L}_{1,\infty}(H):=\left\{A\in \mathcal{K}(H):\mu(n,A)=O(\frac{1}{1+n})\right\},$$
%\end{equation}
(see \cite[Example 1.2.6, p. 24]{LSZ}).

Define the Marcinkiewicz (or Lorentz) space $M_{1,\infty}(0,\infty)$ by setting
%\begin{equation}\label{Marcin func space}
$$M_{1,\infty}(0,\infty):=\left\{x\in S(0,\infty): \sup_{t>0}\frac{1}{\log(1+t)}\int_0^t\mu(s,x)ds<\infty\right\}$$
%\end{equation}
equipped with the norm
$$\|x\|_{M_{1,\infty}(0,\infty)}:=\sup_{t>0}\frac{1}{\log(1+t)}\int_0^t\mu(s,x)ds.$$
This is an example of a symmetric Banach function space. For more information on Marcinkiewicz spaces we refer the reader to \cite[Chapter II.5]{BSh} and \cite[Chapter II.5]{KPS}.
Similarly, the non-commutative $\mathcal{M}_{1,\infty}(\mathcal{M})$ space is defined by
$$\mathcal{M}_{1,\infty}(\mathcal{M}):=\left\{A\in S(\mathcal{M},\tau): \sup_{t>0}\frac{1}{\log(1+t)}\int_0^t\mu(s,A)ds<\infty\right\}$$
equipped with the norm
$$\|A\|_{\mathcal{M}_{1,\infty}(\mathcal{M})}:=\sup_{t>0}\frac{1}{\log(1+t)}\int_0^t\mu(s,A)ds.$$
These spaces are also symmetric spaces. We refer to \cite{LSZ} (see also \cite{CRSS}) for detailed study of this space and its applications in non-commutative geometry.

Let $\mathcal{M}=B(H)$ and $\tau=Tr.$ Then, we have
$$\mathcal{M}_{1,\infty}(H):=\left\{A\in \mathcal{K}(H):\|A\|_{\mathcal{M}_{1,\infty}(H)}:=\sup_{n\in\mathbb{Z}_{+}}\frac{1}{\log(2+n)}\sum_{k=0}^{n}\mu(k,A)<\infty \right\},$$
where $\{\mu(k,A)\}_{k\in \mathbb{Z}_{+}}$ is the sequence of singular values of a compact operator $A.$ This space is known as the dual of the Macaev ideal on a separable Hilbert space $H.$

Moreover, the space $\mathcal{M}_{1,\infty}(H)$ contains the quasi-Banach ideal $\mathcal{L}_{1,\infty}(H)$ of compact operators, i.e. the following inclusion
$$\mathcal{L}_{1,\infty}(H)\subset \mathcal{M}_{1,\infty}(H)$$
is strict (see \cite[Lemma 1.2.8 and Example 1.2.9, pp. 25-26]{LSZ}).

For $\mathcal{M}=\ell_{\infty}(\mathbb{Z}_{+}),$ the space $M_{1,\infty}(\mathbb{Z}_{+})$ is defined by
\begin{equation}\label{Marsecuence}
M_{1,\infty}(\mathbb{Z}_{+}):=\{a\in c_{0}:\|a\|_{M_{1,\infty}(\mathbb{Z}_{+})}:=\sup_{n\geq 0}\frac{1}{\log(2+n)}\sum_{k=0}^{n}\mu(k,a)<\infty\},
\end{equation}
(see \cite[Example 1.2.7, p. 24]{LSZ}).

\subsection{Calder\'{o}n operator and Hilbert transform}\label{calderon}
Recall that $S(0,\infty)$ is the space of all Lebesgue measurable functions on $(0,\infty)$ such that $m(\{t : |x(t)| > s\})$ is finite for some $s > 0.$
Define operators $C:(L_{1}+L_{\infty})(0,\infty)\rightarrow (L_{1,\infty}+L_{\infty})(0,\infty)$ by
\begin{equation}\label{Ces}(Cx)(t):=\frac{1}{t}\int_{0}^{t}x(s)ds , \,\ x\in(L_{1}+L_{\infty})(0,\infty)
\end{equation}
and $C':\Lambda_{\log}(0,\infty)\rightarrow S(0,\infty) $ by
$$(C'x)(t):=\int_t^{\infty}x(s)\frac{ds}{s}, \,\, x\in\Lambda_{\log}(0,\infty),$$
where $C$ is called the Ces\`{a}ro operator (or else Hardy-Littlewood operator or Hardy operator as in \cite[Chapter II.3]{BSh},\cite[Chapter II.6]{KPS}).

For each $x\in \Lambda_{\log}(0,\infty),$ define the Calder\'{o}n operator $S:\Lambda_{\log}(0,\infty)\rightarrow(L_{1,\infty}+L_{\infty})(0,\infty)$ as a sum of $C$ and $C'$ by
\begin{equation}\label{S}
 (Sx)(t):=\frac1t\int_0^tx(s)ds+\int_t^{\infty}x(s)\frac{ds}{s}=(Cx)(t)+(C'x)(t), \,\ x\in\Lambda_{\log}(0,\infty).
\end{equation}
It is obvious that $S$ is linear operator.
If $0<t_{1}<t_{2},$ then
$$\min\left(1,\frac{s}{t_{2}}\right)\leq \min\left(1,\frac{s}{t_{1}}\right)\leq \frac{t_{2}}{t_{1}}\cdot\min\left(1,\frac{s}{t_{2}}\right), \,\, s>0.$$
Therefore, if $x$ is nonnegative, it follows from the first of these inequalities that $(Sx)(t)$ is a
decreasing function of $t.$ The operator $S$ is often applied to the decreasing rearrangement $\mu(x)$ of a function $x$ defined on some other measure space.
Since $S\mu(x)$ is itself decreasing, it is easy to see that $\mu(S\mu(x))=S\mu(x).$
Let $x\in \Lambda_{\log}(0,\infty).$ Since for each $t>0,$ the kernel
$k_{t}(s)=\frac{1}{s}\cdot\min\Big\{1,\frac{s}{t}\Big\}$
is a decreasing function of $s,$ it follows  from \cite[Theorem II.2.2, p. 44]{BSh} that
\begin{equation}\label{S proper}\begin{split}
|(Sx)(t)|
&\stackrel{\eqref{S}}{=}\Big|\int_{0}^{\infty}x(s)\min\Big\{1,\frac{s}{t}\Big\}\frac{ds}{s}\Big|\\
&\leq\int_{0}^{\infty}|x(s)|\min\Big\{1,\frac{s}{t}\Big\}\frac{ds}{s}\leq\int_{0}^{\infty}\mu(s,x)\min\Big\{1,\frac{s}{t}\Big\}\frac{ds}{s}\stackrel{\eqref{S}}{=}(S\mu(x))(t).
\end{split}\end{equation}
For more information on these operators, we refer to \cite[Chapter III]{BSh} and \cite[Chapter II]{KPS}.

If $x\in \Lambda_{\log}(\mathbb{R}),$  then the classical Hilbert transform $\mathcal{H}$ is defined by the principal-value integral
\begin{equation}\label{hilbert tr}
(\mathcal{H}x)(t)=p.v.\frac{1}{\pi}\int_{\mathbb{R}}\frac{x(s)}{t-s}ds, \,\,\ x\in\Lambda_{\log}(\mathbb{R}).
\end{equation}
%(see, e.g. \cite[Chapter III. 4]{BSh}).

\begin{rmk}\label{dom of Hilbert trans} Let $x=x\chi_{(0,\infty)}$ such that $x$ is a non-negative decreasing function on $(0,\infty).$ Then it is easy to see that
\begin{eqnarray*}\begin{split} |(\mathcal{H}x)(-t)|
&\stackrel{\eqref{hilbert tr}}{=}\frac{1}{\pi}\left|\int_{\mathbb{R}_{+}}\frac{x(s)}{-t-s}ds\right|\\
&=\frac{1}{\pi}\int_{\mathbb{R}_{+}}\frac{x(s)}{t+s}ds=\frac{1}{\pi}\left(\int_{0}^{t}\frac{x(s)}{t+s}ds+\int_{t}^{\infty}\frac{x(s)}{t+s}ds\right)\\
&\geq\frac{1}{\pi}\left(\int_{0}^{t}\frac{x(s)}{2t}ds+\int_{t}^{\infty}\frac{x(s)}{2s}ds\right)\\
&=\frac{1}{2\pi}\cdot\left(\frac{1}{t}\int_{0}^{t}x(s)ds+\int_{t}^{\infty}\frac{x(s)}{s}ds\right)\stackrel{\eqref{S}}{=}\frac{1}{2\pi}(Sx)(t), \,\, t>0,
\end{split}\end{eqnarray*}
i.e. we have
$$\frac{1}{2\pi}(S\mu(x))(t)\leq |(\mathcal{H}x)(-t)|, \,\, t>0.$$
Therefore, if $(\mathcal{H}x)(-t)$ exists, then it follows that $S\mu(x)$ exists, and it means $x$ belongs to the domain of $S,$ i.e. $x\in \Lambda_{\log}(0,\infty)$ (see \eqref{S}).
On the other hand, if $x\in\Lambda_{\log}(0,\infty),$ then by \cite[Theorem III.4.8, p. 138]{BSh}, we have
$$\mu(\mathcal{H}x)\leq c_{abs}S\mu(x),$$
which shows existence of $\mathcal{H}x.$
\end{rmk}

Let $\mathcal{M}\bar{\otimes}L_{\infty}(\mathbb{R})$ be a von Neumann tensor product equipped with the normal semifinite faithful tensor product trace $\nu=\tau\otimes m,$ where $m$ is the
trace on $L_{\infty}(\mathbb{R})$ given by integration with respect to Lebesgue measure on $\mathbb{R}.$ Let $E$ be a symmetric quasi-Banach space on $\mathbb{R}.$

If the operator $1\otimes \mathcal{H}$ is defined on some $\mathcal{E}(\mathcal{M}\bar{\otimes}L_{\infty}(\mathbb{R})),$ then $\mathcal{H}$ is defined on $E(\mathbb{R}).$ By Remark \ref{dom of Hilbert trans}, it must be that $E(\mathbb{R})\subset\Lambda_{\log}(\mathbb{R}).$ So, $1\otimes \mathcal{H}$ cannot be defined outside of $\Lambda_{\log}(\mathcal{M}\bar{\otimes}L_{\infty}(\mathbb{R})).$
Set
$$1\otimes \mathcal{H}:\sum_{k=1}^{n} x_k\otimes f_k\to\sum_{k=1}^{n} x_k\otimes \mathcal{H}(f_k),$$
where $x_{k}\in \mathcal{M}$ and $f_{k}\in \Lambda_{\log}(\mathbb{R}),$ $k=1,2,...,n.$
Then, these elementary tensors are dense in $\Lambda_{\log}(\mathcal{M}\bar{\otimes}L_{\infty}(\mathbb{R}))$ and we have norm estimate.
Therefore, $1\otimes \mathcal{H}$ is defined on $\Lambda_{\log}(\mathcal{M}\bar{\otimes}L_{\infty}(\mathbb{R})).$

Define the discrete version of the operator $S^{d}:\Lambda_{\log}(\mathbb{Z}_{+})\rightarrow(\ell_{1,\infty}+\ell_{\infty})(\mathbb{Z}_{+})$ by
\begin{equation}\label{S dis}\big(S^{d}a\big)(n):=\frac{1}{n+1}\sum_{k=0}^{n}a(k)+\sum_{k=n+1}^{+\infty}\frac{a(k)}{k}, \,\,\ a\in \Lambda_{\log}(\mathbb{Z}_{+}).
\end{equation}

\subsection{Triangular truncation operator}\label{triangular}
%Let $\mathbb{R}$ denote the field of real numbers.

Our primary example is a triangular truncation operator on the Hilbert space $H=L_2(\mathbb{R}).$ More precisely, let $K$ be a fixed measurable function on $\mathbb{R}\times\mathbb{R}.$ Let us consider an operator $V$ with the
integral kernel $K$ on $L_2(\mathbb{R})$ is defined by
\begin{equation}\label{int oper}(Vx)(t)=\int_{\mathbb{R}}K(t,s)x(s)ds, \,\,\ x\in L_2(\mathbb{R}).
\end{equation}
Then for any $V\in \mathcal{E}(H),$ we define the operator $T(V)$ as follows (see \cite{GK1,GK2} for more details)
\begin{equation}\label{T-oper}
(T(V)x)(t)=\int_{\mathbb{R}}K(t,s){\rm sgn}(t-s)x(s)ds, \,\,\ x\in L_2(\mathbb{R}).
\end{equation}
Let $H=L_{2}(\mathbb{R}).$ The following theorem gives a weak type estimate for the operator $T.$
\begin{thm}\label{weak est for T} For all $V\in \mathcal{L}_{1}(H)$ defined by \eqref{int oper},
we have
$$\|T(V)\|_{\mathcal{L}_{1,\infty}(H)}\leq 20\|V\|_{\mathcal{L}_{1}(H)}.$$
\end{thm}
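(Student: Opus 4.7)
The plan is to reduce the weak-type $(1,1)$ estimate for the triangular truncation operator to a weak-type estimate for the classical Hilbert transform, exploiting the Fourier representation of the sign function. This keeps the argument close in spirit to the rest of the paper, where the Hilbert transform is the central object.

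The starting point is the distributional identity
$$\text{sgn}(t-s)=\frac{1}{\pi i}\,\text{p.v.}\!\int_{\mathbb{R}} e^{i\xi(t-s)}\,\frac{d\xi}{\xi},$$
which, substituted into the defining formula \eqref{T-oper}, yields (formally) the operator-valued principal-value integral
$$T(V)=\frac{1}{\pi i}\,\text{p.v.}\!\int_{\mathbb{R}} U_\xi\, V\, U_\xi^{*}\,\frac{d\xi}{\xi},\qquad (U_\xi f)(t)=e^{i\xi t}f(t),\ f\in L_2(\mathbb{R}).$$
The path $\xi\mapsto U_\xi V U_\xi^{*}$ is strongly continuous in $\mathcal{L}_1(H)$ with constant trace-class norm $\|V\|_{\mathcal{L}_1(H)}$, since each $U_\xi$ is unitary. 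I would first establish this representation on the dense subclass of finite-rank $V$ with Schwartz integral kernels, where an $\varepsilon$-$R$ truncation $\chi_{\varepsilon<|\xi|<R}\,d\xi/\xi$ combined with Plancherel on the second variable makes the identity rigorous in $\mathcal{L}_2(H)$ and then in operator norm on bounded kernels.

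Next, one recognizes the right-hand side above as (a specialization of) the operator-valued Hilbert transform, in the setting $\mathcal{M}=B(H)$, applied to the $B(H)$-valued symbol $\xi\mapsto U_\xi V U_\xi^*$, and evaluated at $\xi=0$. Coupling the scalar weak-type $(1,1)$ estimate of Kolmogorov, $\mu(\mathcal{H}x)\le c_{abs}\,S\mu(x)$ (available on $\Lambda_{\log}(\mathbb{R})$ by Remark~\ref{dom of Hilbert trans}), with Fubini in the tensor product and the $\tau\otimes m$-faithfulness of the trace, one obtains
$$t\,\mu(t,T(V))\le C\,\|V\|_{\mathcal{L}_1(H)}\qquad(t>0),$$
for an absolute constant $C$. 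Careful bookkeeping of the numerical factors entering this chain (the $1/\pi$ from the Fourier representation, the $1/\pi$ from the normalization of $\mathcal{H}$, and the explicit Kolmogorov constant together with the factor of $2$ in the triangle inequality \eqref{triangle svf} applied to the $\pm$-halves of the integral) should yield $C\le 20$.

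The chief technical obstacle is the rigorous treatment of the operator-valued singular integral, whose kernel $d\xi/\xi$ is not absolutely integrable, and the extraction of a bound on the single operator $T(V)\in B(H)$ from an a priori estimate that lives naturally on the tensor product $B(H)\bar\otimes L_\infty(\mathbb{R})$; this requires either an averaging/trace-out argument or a direct non-commutative Calderón–Zygmund decomposition. A parallel, more elementary route that is likely cleaner for obtaining the explicit constant $20$ is the following: using the Schmidt decomposition $V=\sum_n s_n(V)\,e_n\otimes f_n^{*}$, split $V=V_{\le N}+V_{>N}$ at a level $N$ chosen so that $\sum_{n>N}s_n(V)\le t^{-1}\|V\|_{\mathcal{L}_1(H)}$, and combine a Hilbert–Schmidt bound $\|T(V_{\le N})\|_{\mathcal{L}_2(H)}\le \|V_{\le N}\|_{\mathcal{L}_2(H)}$ for the low-rank part (contractivity of $T$ on $\mathcal{L}_2$ is immediate from $|\text{sgn}|\le 1$) with the sub-additivity \eqref{triangle svf} of the singular value function to control the tail. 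This avoids the singular integral altogether and produces an explicit numerical constant, at the cost of more delicate arithmetic.
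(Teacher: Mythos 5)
Both routes you sketch have a genuine gap at exactly the point where the real work lies. In Route A, the Fourier representation of ${\rm sgn}$ and the formula $T(V)=\frac{1}{\pi i}\,{\rm p.v.}\int U_\xi VU_\xi^*\,\frac{d\xi}{\xi}$ are fine as formal identities, but the step ``couple the scalar Kolmogorov estimate with Fubini in the tensor product'' does not produce a bound on the single operator $T(V)$: the quasi-norm of $\mathcal{L}_{1,\infty}$ is not subadditive under integrals, so you cannot average a weak-type estimate over $\xi$, nor ``trace out'' the auxiliary variable from an estimate that lives on $B(H)\bar\otimes L_\infty(\mathbb{R})$. What you would actually need is an operator-valued (vector-valued in $\mathcal{L}_1(H)$) weak-type $(1,1)$ theorem for the Hilbert transform, i.e.\ a non-commutative Calder\'on--Zygmund-type result; that is essentially equivalent in strength to the statement being proved, and it is precisely what the paper imports: the proof of Theorem \ref{weak est for T} cuts the kernel away from the diagonal so that $T(V_n)=\sum_{j,k}{\rm sgn}(j-k)P_jV_nP_k$ is a finite triangular truncation with respect to multiplication projections, applies the known bound $\|T(V_n)\|_{\mathcal{L}_{1,\infty}}\leq 10\|V_n\|_{\mathcal{L}_1}$ from \cite[Theorem 1.4]{DDPS}, uses $\|V_n\|_{\mathcal{L}_1}\leq 2\|V\|_{\mathcal{L}_1}$ (whence the $20$), and passes to the limit via $\mathcal{L}_2$-convergence and the Fatou property of $\mathcal{L}_{1,\infty}$. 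Your ``careful bookkeeping of constants'' has no source for the numerical bound because the underlying weak-type input is never established.

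Route B cannot work even in principle: the only property of $T$ it uses is $|{\rm sgn}|\leq 1$, i.e.\ that $T$ is a contraction on $\mathcal{L}_2(H)$, and $\mathcal{L}_2$-contractivity alone does not imply a weak $(1,1)$ bound. For instance, the rank-one orthogonal projection $\mathcal{T}(V)=\langle V,W\rangle_{\mathcal{L}_2}W$ with $W\geq 0$ diagonal, $\|W\|_{\mathcal{L}_2}=1$, $\|W\|_{\mathcal{L}_\infty}\leq 1$ but $W\notin\mathcal{L}_{1,\infty}(H)$, is a self-adjoint $\mathcal{L}_2$-contraction sending suitable rank-one $V\in\mathcal{L}_1(H)$ to a multiple of $W\notin\mathcal{L}_{1,\infty}(H)$. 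Concretely, in your splitting $V=V_{\leq N}+V_{>N}$ the low-rank part only admits $\mu(t,T(V_{\leq N}))\leq t^{-1/2}\|V_{\leq N}\|_{\mathcal{L}_2}$, and $\|V_{\leq N}\|_{\mathcal{L}_2}^2\leq \mu(0,V)\|V\|_{\mathcal{L}_1}$ cannot be converted into $c\,t^{-1}\|V\|_{\mathcal{L}_1}$ uniformly in $t$ (try $\mu(n,V)=1/n$ for $n\leq M$ and $t\gg\log^2 M$); the tail part has small trace norm, but since $T$ is not bounded on $\mathcal{L}_1$ there is no estimate for $\mu(\cdot,T(V_{>N}))$ beyond the same $\mathcal{L}_2$ bound. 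Any correct proof must exploit the cancellation structure of the kernel ${\rm sgn}(t-s)$ (as the cited result \cite[Theorem 1.4]{DDPS} does), not merely its boundedness.
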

\begin{proof} Let
$$(Vx)(t)=\int_{\mathbb{R}}K(t,s)x(s)ds, \,\,\ x\in L_2(\mathbb{R})$$
such that $V\in \mathcal{L}_{1}(H).$ For any $n\in \mathbb{Z}_{+},$ define
$$\Delta_{n}=\Big([-2^{n},2^{n}]\times[-2^{n},2^{n}]\Big)\setminus\bigcup_{k=-2^{2n}}^{2^{2n}-1}\Big(\Big[\frac{k}{2^{n}},\frac{k+1}{2^{n}}\Big]\times\Big[\frac{k}{2^{n}},\frac{k+1}{2^{n}}\Big]\Big).$$
Then $\Delta_{n}\nearrow \mathbb{R}\times\mathbb{R}$ as $n\rightarrow \infty.$
Let $$(V_{n}x)(t)=\int_{\mathbb{R}}K(t,s)\chi_{\Delta_{n}}(t,s)x(s)ds, \,\,\ x\in L_2(\mathbb{R}).$$ Hence, by \eqref{T-oper}, we have
$$(T(V_{n})x)(t)=\int_{\mathbb{R}}K_{n}(t,s){\rm sgn}(t-s)x(s)ds, \,\,\ x\in L_2(\mathbb{R}),$$
where $K_{n}=K\chi_{\Delta_{n}},$ $n\in\mathbb{Z}_{+}.$
If define $P_{k}:=M_{\chi_{[\frac{k}{2^{n}},\frac{k+1}{2^{n}}]}}$ on $L_{2}(\mathbb{R}),$ then
$$T(V_{n})=\sum_{j,k=-2^{n}}^{2^{n}-1}{\rm sgn}(j-k)P_{j}V_{n}P_{k}, \quad n\in \mathbb{Z}_{+}.$$
Since $T$ is linear and self-adjoint on $\mathcal{L}_{2}(H),$ it follows that
$$\|T(V)-T(V_{n})\|_{\mathcal{L}_{2}(H)}=\|V-V_{n}\|_{\mathcal{L}_{2}(H)}\rightarrow 0\quad \text{as} \quad n\rightarrow \infty.$$
On the other hand, for each $n\in \mathbb{Z}_{+},$ we have
\begin{equation}\label{int est}
 \|V_{n}\|_{\mathcal{L}_{1}(H)}\leq2\|V\|_{\mathcal{L}_{1}(H)}, \quad V\in \mathcal{L}_{1}(H).
\end{equation}
By Theorem 1.4 in \cite{DDPS} and \eqref{int est}, we have
$$\|T(V_{n})\|_{\mathcal{L}_{1,\infty}(H)}\leq 10\|V_{n}\|_{\mathcal{L}_{1}(H)}\leq 20\|V\|_{\mathcal{L}_{1}(H)}, \quad V\in \mathcal{L}_{1}(H).$$
Since the quasi-norm in $\mathcal{L}_{1,\infty}(H)$ has the Fatou property, it follows that
$$\|T(V)\|_{\mathcal{L}_{1,\infty}(H)}\leq 20\|V\|_{\mathcal{L}_{1}(H)}, \quad V\in \mathcal{L}_{1}(H).$$
\end{proof}

\subsection{Double operator integrals}\label{doi subsection} Let $A$ be a self-adjoint operator affiliated with $\mathcal{M}$ and $\xi$ be a bounded Borel function on $\mathbb{R}^2$. Symbolically, a double operator integral is defined by the formulae
\begin{equation}\label{doi def}
T_{\xi}^{A,A}(V)=\int_{\mathbb{R}^2}\xi(\lambda,\mu)dE_A(\lambda)VE_A(\mu),\quad V\in \mathcal{L}_2(\mathcal{M}).
\end{equation}
For a more rigorous definition, consider projection valued measures on $\mathbb{R}$ acting on the Hilbert space $\mathcal{L}_2(\mathcal{M})$ by the formulae $X\to E_A(\mathcal{B})X$ and $X\to XE_A(\mathcal{B}).$ These spectral measures commute and, hence (see Theorem
V.2.6 in \cite{BirSol}), there exists a countably additive (in the strong operator topology) projection-valued measure $\nu$ on $\mathbb{R}^2$ acting on the Hilbert space $\mathcal{L}_2(\mathcal{M})$ by the formulae
%\begin{equation}\label{birsol spectral measure}
$$\nu(\mathcal{B}_1\otimes\mathcal{B}_2):X\to E_A(\mathcal{B}_1)XE_A(\mathcal{B}_2),\quad X\in \mathcal{L}_2(\mathcal{M}).$$
%\end{equation}
Integrating a bounded Borel function $\xi$ on $\mathbb{R}^2$ with respect to the measure $\nu$ produces a bounded operator acting on the Hilbert space $\mathcal{L}_2(\mathcal{M}).$ In what follows, we denote the latter operator by
$T_{\xi}^{A,A}$ (see also \cite[Remark 3.1]{PSW}).

We are mostly interested in the case $\xi=f^{[1]}$ for a Lipschitz function $f:\mathbb{R}\rightarrow \mathbb{C}.$ Here,
$$f^{[1]}(\lambda,\mu)=
\begin{cases}
\frac{f(\lambda)-f(\mu)}{\lambda-\mu},\quad \lambda\neq\mu\\
0,\quad \lambda=\mu.
\end{cases}
$$

\section{Statement of the main results}\label{statement section}

In this section, we describe our main technical tools. Firstly, we emphasize the deep connection between the studies of operators $1\otimes\mathcal{H}$ and $T.$ While connection has been noted before (see \cite{Ar,DDdePS,DDPS,GK1,GK2,Ran1,Ran2}), our approach is distinct to all previous approaches. We are able to present a single abstract approach to the study of self-adjoint contractions on semifinite von Neumann algebras $(\mathcal{M},\tau)$ (see Subsection \ref{s number section}), which allows us to treat these two operators from a single perspective. In particular, we are in a position to give a precise description of optimal ranges of all just cited operators.

We need the following result.
\begin{proposition}\label{Separab} Let $\mathcal{M}$ be a semifinite  von Neumann algebra equipped with a faithful normal semifinite trace $\tau$ and $\mathcal{E}_0(\mathcal{M})$ be a symmetric (quasi-)Banach operator space.
Suppose that $\mathcal{E}_0(\mathcal{M})$ has order continuous (quasi-)norm. Then $(\mathcal{E}_0\cap \mathcal{L}_2)(\mathcal{M})$ is dense in $\mathcal{E}_0(\mathcal{M}).$
\end{proposition}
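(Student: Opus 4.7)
The plan is to approximate $A \in \mathcal{E}_0(\mathcal{M})$ by a double spectral truncation that is simultaneously bounded and supported on a $\tau$-finite projection, so that it automatically lands in $\mathcal{L}_2(\mathcal{M})$. Writing $A = U|A|$ for the polar decomposition, I define, for each $n \geq 1$, the spectral projection $p_n := E_{|A|}\bigl((1/n, n]\bigr)$ (which commutes with $|A|$) and set $A_n := U\,|A|\,p_n$. A direct computation with $A_n^*A_n$ gives $|A_n| = |A|\,p_n \leq n\, p_n$, while $A - A_n = U\,|A|(1 - p_n)$ has $|A - A_n| = |A|(1 - p_n)$.

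The first step is to verify $A_n \in \mathcal{L}_2(\mathcal{M})$. Since $|A_n|^2 = |A|^2 p_n \leq n^2 p_n$, it suffices to check $\tau(p_n) < \infty$, which in turn reduces to showing $\tau\bigl(E_{|A|}(\varepsilon,\infty)\bigr) < \infty$ for every $\varepsilon > 0$, equivalently $\mu(t,A) \to 0$ as $t \to \infty$. This is precisely where order continuity is invoked: if $c := \lim_{t\to\infty}\mu(t,A) > 0$, then $z_k := c\,\chi_{(k,\infty)}$ satisfies $z_k \leq \mu(A) \in E_0$, so $z_k \in E_0$; but $z_k \downarrow 0$ pointwise while $\mu(z_k) = c\,\chi_{(0,\infty)}$ is independent of $k$ (because $(k,\infty)$ has infinite Lebesgue measure), so $\|z_k\|_{E_0}$ is a positive constant, contradicting order continuity. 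Hence $\mu(t,A) \to 0$, $\tau(p_n) < \infty$, and $A_n \in \mathcal{L}_2(\mathcal{M})$. Membership $A_n \in \mathcal{E}_0(\mathcal{M})$ is immediate from $\mu(A_n) = \mu(|A|p_n) \leq \mu(A)$.

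For the convergence, the intervals $(1/n,n]$ exhaust $(0,\infty)$, so $p_n$ increases to the support projection $s(|A|)$, whence $|A|(1 - p_n) \downarrow 0$ in $S(\mathcal{M},\tau)$. Each term is dominated by $|A|$ and hence lies in $\mathcal{E}_0(\mathcal{M})$, so order continuity of the quasi-norm on $\mathcal{E}_0(\mathcal{M})$ (equivalent to order continuity on $E_0$, as noted after Definition \ref{NC Sym}) gives
$$\|A - A_n\|_{\mathcal{E}_0(\mathcal{M})} = \bigl\||A|(1-p_n)\bigr\|_{\mathcal{E}_0(\mathcal{M})} \longrightarrow 0,$$
completing the density statement.

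The one delicate point is the decay $\mu(t,A) \to 0$: the embedding $\mathcal{E}_0(\mathcal{M}) \subset \mathcal{L}_1(\mathcal{M}) + \mathcal{L}_\infty(\mathcal{M})$ does not suffice (any nonzero constant function lies in $L_1 + L_\infty$), so the order continuity hypothesis must be exploited in this genuinely non-trivial way. Everything else is routine spectral calculus, and since no triangle inequality with a nontrivial quasi-Banach constant is needed at any step, the Banach and quasi-Banach cases proceed uniformly.
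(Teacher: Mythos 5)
Your construction is correct in outline, and it is in effect a self-contained, explicit version of what the paper does by citation: the paper's proof simply observes that the $\tau$-finite range operators $\mathcal{F}^r(\mathcal{M},\tau)$ lie in $(\mathcal{E}_0\cap\mathcal{L}_2)(\mathcal{M})$ and invokes the known fact (\cite[Lemma 4.9, Chapter IV]{DdePS}, or \cite{HS}) that order continuity of the (quasi-)norm makes $\mathcal{F}^r(\mathcal{M},\tau)$ dense in $\mathcal{E}_0(\mathcal{M})$. Your double spectral truncation $A_n=U|A|\,E_{|A|}((1/n,n])$ reproves precisely this density: the identities $|A_n|=|A|p_n$ and $|A-A_n|=|A|(1-p_n)$, the monotone convergence $|A|(1-p_n)\downarrow 0$, the membership $\mu(A_n)\le\mu(A)$, and the final appeal to order continuity of $\mathcal{E}_0(\mathcal{M})$ are all fine, as is the remark that no quasi-triangle constant enters. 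What your route buys is independence from the unpublished reference; what the paper's route buys is brevity and the fact that it works verbatim for abstract symmetric operator spaces in the sense of Definition \ref{NC Sym}.

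The one step that is not justified as written is the decay $\mu(t,A)\to 0$, which you deduce from order continuity of the \emph{function} space $E_0$, asserting that this is ``equivalent'' to order continuity of $\mathcal{E}_0(\mathcal{M})$ as noted after Definition \ref{NC Sym}. The paper only notes the implication from $E$ to $\mathcal{E}(\mathcal{M},\tau)$; the converse, which is the direction you use, is false in general (take $\mathcal{M}=\mathbb{C}$ with $\tau(1)=1$ and $E_0=L_1+L_\infty$: the operator space is one-dimensional, hence trivially order continuous, while $E_0$ is not), and moreover the proposition's hypothesis is stated for $\mathcal{E}_0(\mathcal{M})$, which by Definition \ref{NC Sym} need not even arise from a function space $E_0$. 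The gap is easily repaired by running your $z_k$-argument inside $\mathcal{M}$ instead of on $(0,\infty)$: if $\tau(E_{|A|}(\varepsilon,\infty))=\infty$ for some $\varepsilon>0$, set $q:=E_{|A|}(\varepsilon,\infty)$, note $\varepsilon q\le |A|$, so $\varepsilon q\in\mathcal{E}_0(\mathcal{M})$; by semifiniteness choose $\tau$-finite projections $q_\beta\uparrow q$; then $\varepsilon(q-q_\beta)\downarrow 0$ while $\mu(\varepsilon(q-q_\beta))=\varepsilon\chi_{(0,\infty)}=\mu(\varepsilon q)$ for every $\beta$ (since $\tau(q-q_\beta)=\infty$), so all terms of the net have the same nonzero quasi-norm, contradicting order continuity of $\mathcal{E}_0(\mathcal{M})$ directly. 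With this substitution your proof is complete, needs no underlying $E_0$, and matches the stated hypothesis exactly.
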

\begin{proof}
Let $\mathcal{F}^{r}(\mathcal{M},\tau)$ be the set of all $\tau$-finite range (or $\tau$-finite rank) operators in $\mathcal{M}$ (see \cite[Section 2.4, p. 210]{DdeP}) and let
$\overline{\mathcal{F}^{r}(\mathcal{M},\tau)}^{\|\cdot\|_{\mathcal{E}_{0}(\mathcal{M})}}$ be its closure in $\mathcal{E}_{0}(\mathcal{M})$. It is clear that $\mathcal{F}^{r}(\mathcal{M},\tau)\subset(\mathcal{E}_0\cap \mathcal{L}_2)(\mathcal{M})\subset \mathcal{E}_0(\mathcal{M}).$
Since the (quasi-)norm on $\mathcal{E}_0(\mathcal{M})$ is order continuous by assumption, it follows that $\overline{\mathcal{F}^{r}(\mathcal{M},\tau)}^{\|\cdot\|_{\mathcal{E}_{0}(\mathcal{M})}}=\mathcal{E}_{0}(\mathcal{M})$ (see \cite[Lemma 4.9, Chapter
IV]{DdePS}, or \cite{HS} for a more general case). This shows that $(\mathcal{E}_0\cap \mathcal{L}_2)(\mathcal{M})$ is dense in $\mathcal{E}_0(\mathcal{M}).$
\end{proof}
Now, we adopt the following convention to explain our approach.
\begin{convention}\label{main setting} Let $\mathcal{M}$ be a semifinite von Neumann algebra equipped with a faithful normal semifinite trace $\tau.$
Also assume that $\mathcal{T}:\mathcal{L}_2(\mathcal{M})\to \mathcal{L}_2(\mathcal{M})$ is a self-adjoint contraction. Let $\mathcal{E}_0(\mathcal{M})$ and $\mathcal{E}_1(\mathcal{M})$ be symmetric (quasi-)Banach operator spaces. Suppose that the norm on $\mathcal{E}_0(\mathcal{M})$ is order continuous.
We say that $\mathcal{T}:\mathcal{E}_0(\mathcal{M})\to \mathcal{E}_1(\mathcal{M})$ if
\begin{equation}\label{Contraction}
\|\mathcal{T}(V)\|_{\mathcal{E}_1(\mathcal{M})}\leq c(\mathcal{T})\|V\|_{\mathcal{E}_0(\mathcal{M})},\quad \forall V\in (\mathcal{E}_0\cap \mathcal{L}_2)(\mathcal{M}).
\end{equation}
Since $(\mathcal{E}_0\cap \mathcal{L}_2)(\mathcal{M})$ is dense in $\mathcal{E}_0(\mathcal{M})$ by Proposition \ref{Separab}, it follows that $\mathcal{T}$ admits a unique bounded linear extension $\mathcal{T}:\mathcal{E}_0(\mathcal{M})\to \mathcal{E}_1(\mathcal{M}).$
\end{convention}
Throughout this paper, we shall use the symbol $\mathcal{A}\lesssim \mathcal{B}$ to indicate that there exists a universal positive constant $c_{abs},$ independent of all important parameters, such that $\mathcal{A}\leq
c_{abs}\mathcal{B}$.
$\mathcal{A}\approx \mathcal{B}$  means that $\mathcal{A}\lesssim \mathcal{B}$ and $\mathcal{A}\gtrsim \mathcal{B}.$
 Recall that the operator $S$ is given by formulae \eqref{S}.

 The main result of the paper is the following theorem, which underpins the solution to Problems \ref{main problem} and \ref{second problem}.
\begin{thm}\label{first main theorem} Let $\mathcal{M}$ be a semifinite von Neumann algebra equipped with a faithful normal semifinite trace $\tau.$ Let $\mathcal{T}:\mathcal{L}_2(\mathcal{M})\to \mathcal{L}_2(\mathcal{M})$ be a self-adjoint
contraction.
\begin{enumerate}[{\rm (i)}]
\item Suppose that $\mathcal{T}$ admits a bounded linear extension on $\mathcal{L}_{p}(\mathcal{M})$ for all $1<p\leq 2.$ If
\begin{equation}\label{p-est}
\|\mathcal{T}\|_{\mathcal{L}_p(\mathcal{M})\to \mathcal{L}_p(\mathcal{M})}\lesssim \frac{1}{p-1},\quad 1<p\leq 2,
\end{equation} then
$$\mu(\mathcal{T}(A))\prec\prec c_{abs}S\mu(A), \,\ A\in \Lambda_{\log}(\mathcal{M}).$$
\item Suppose that $\mathcal{T}$ admits a bounded linear extension from $\mathcal{L}_{1}(\mathcal{M})$ to $\mathcal{L}_{1,\infty}(\mathcal{M}),$ that is
 \begin{equation}\label{weak est}
\|\mathcal{T}\|_{\mathcal{L}_1(\mathcal{M})\to \mathcal{L}_{1,\infty}(\mathcal{M})}\lesssim 1.
\end{equation} We have
$$\mu(\mathcal{T}(A))\leq c_{abs}S\mu(A), \,\ A\in \Lambda_{\log}(\mathcal{M}).$$
\end{enumerate}
\end{thm}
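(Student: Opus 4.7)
The overall strategy is to decompose $A$ using the spectral projection of $|A|$ at the threshold $\alpha:=\mu(t,A)$ and to apply the operator bounds available on each piece. By Proposition~\ref{Separab} and the Fatou property of $\mathcal{L}_{1,\infty}(\mathcal{M})$, it suffices to prove the estimates on the dense subset $(\mathcal{L}_{1}\cap\mathcal{L}_{2})(\mathcal{M})$ and conclude by a density/Fatou argument. Since $\mathcal{T}$ is self-adjoint on $\mathcal{L}_{2}(\mathcal{M})$, the hypothesis $\|\mathcal{T}\|_{p\to p}\lesssim 1/(p-1)$ for $p\in(1,2]$ in~(i) dualises to $\|\mathcal{T}\|_{q\to q}\lesssim q$ for $q\in[2,\infty)$. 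Thus the full $L^{p}$ scale is at our disposal with norm growth $\max\{p,1/(p-1)\}$, mirroring that of the classical Hilbert transform.

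Fix $t>0$ and set $A_{1}:=A\,E_{|A|}((\alpha,\infty))$, $A_{2}:=A-A_{1}$. Routine singular-value calculations yield $\|A_{1}\|_{p}^{p}\leq\int_{0}^{t}\mu(s,A)^{p}\,ds$ and, up to a harmless overhead of order $\alpha^{p}t$, $\|A_{2}\|_{p}^{p}\lesssim\alpha^{p-1}I$ where $I:=\int_{t}^{\infty}\mu(s,A)\,ds$; moreover \eqref{triangle svf} gives $\mu(t,\mathcal{T}(A))\leq\mu(t/2,\mathcal{T}(A_{1}))+\mu(t/2,\mathcal{T}(A_{2}))$. For the small part, Chebyshev combined with the $L^{q}$-bound produces
\[
\mu(t/2,\mathcal{T}(A_{2}))\lesssim q\cdot (t/2)^{-1/q}\|A_{2}\|_{q}\lesssim q\,\alpha\,\bigl(2I/(\alpha t)\bigr)^{1/q}.
\]
Optimising over $q$ (choosing $q\simeq\log(e\,I/(\alpha t))$ when this ratio is $\geq 2$, and $q=2$ otherwise) delivers a bound by $c_{abs}\,(C'\mu(A))(t)$, where $C'$ is the tail part of the Calder\'on operator defined in Subsection~\ref{calderon}.

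For the large part in case~(ii), the weak-$(1,1)$ hypothesis gives directly
\[
\mu(t/2,\mathcal{T}(A_{1}))\leq\frac{2}{t}\|\mathcal{T}\|_{\mathcal{L}_{1}\to\mathcal{L}_{1,\infty}}\|A_{1}\|_{1}\lesssim\frac{1}{t}\int_{0}^{t}\mu(s,A)\,ds=(C\mu(A))(t),
\]
and summing with the small-part estimate finishes the pointwise bound $\mu(t,\mathcal{T}(A))\leq c_{abs}(S\mu(A))(t)$ claimed in~(ii). In case~(i), a pointwise bound of this size on $\mathcal{T}(A_{1})$ is out of reach, but H\"older's inequality $\int_{0}^{t}\mu(s,B)\,ds\leq t^{1-1/p}\|B\|_{p}$ combined with the $L^{p}$-bound gives $\int_{0}^{t}\mu(s,\mathcal{T}(A_{1}))\,ds\lesssim \frac{t^{1-1/p}}{p-1}\bigl(\int_{0}^{t}\mu(s,A)^{p}\,ds\bigr)^{1/p}$; together with the analogous integrated estimate for $\mathcal{T}(A_{2})$ and optimisation over $p$, one obtains $\int_{0}^{t}\mu(s,\mathcal{T}(A))\,ds\leq c_{abs}\int_{0}^{t}(S\mu(A))(s)\,ds$ for every $t>0$, which is precisely the submajorisation asserted in~(i).

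The main obstacle is the optimisation step over $p$ (or $q$): it is precisely the sharp rate $\|\mathcal{T}\|_{p\to p}\lesssim 1/(p-1)$ as $p\to 1^{+}$ (equivalently $\lesssim q$ as $q\to\infty$) that, via the elementary identity $\inf_{q>1}q\,x^{1/q}\simeq\log x$ for $x\gg 1$, is converted into the logarithmic weight hidden inside the Calder\'on operator; mere $L^{p}$-boundedness without control on the rate of blow-up would not suffice. A secondary difficulty is the density passage from the working subset $(\mathcal{L}_{1}\cap\mathcal{L}_{2})(\mathcal{M})$ to all of $\Lambda_{\log}(\mathcal{M})$, which is not directly contained in $\mathcal{L}_{1}+\mathcal{L}_{2}$; this is handled via the Fatou property of $\mathcal{L}_{1,\infty}(\mathcal{M})$ and the monotone stability of $S\mu(A)$ under the relevant approximations.
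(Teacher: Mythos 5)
Your treatment of the ``large'' part via the weak $(1,1)$ bound and your intention to reduce (ii) to integrated estimates are in the spirit of the paper, but both optimisation steps over a single exponent --- the core of your argument --- do not deliver the claimed bounds, so (i) and (ii) each contain a genuine gap. For (i): at a fixed $t$ you bound $\int_0^t\mu(s,\mathcal{T}(A_1))\,ds\lesssim\frac{t^{1-1/p}}{p-1}\bigl(\int_0^t\mu(s,A)^p\,ds\bigr)^{1/p}$ and then optimise over $p$. The target, however, is $\int_0^t(S\mu(A))(s)\,ds=\int_0^t\mu(u,A)\log\bigl(\frac{et}{u}\bigr)du+t\int_t^\infty\mu(u,A)\frac{du}{u}$, an $L\log L$-type quantity, and a single exponent cannot reach it: if $\mu(u,A)=u^{-1}\log^{-3}(et/u)$ near $u=0$ then the target is finite while $\|\mu(A)\chi_{(0,t)}\|_{L_p}=\infty$ for every $p>1$, so your infimum is $+\infty$; and for bounded truncations $g_\delta\uparrow\mu(A)\chi_{(0,t)}$ the infimum still blows up (if it stayed bounded, then $p$ bounded away from $1$ contradicts $\|g_\delta\|_{L_{1+\eta}}\to\infty$, while $p\to1$ forces $\frac{1}{p-1}\|g_\delta\|_{L_1(0,t)}\to\infty$ by H\"older), so no absolute constant can emerge. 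Yano-type extrapolation needs the exponent to vary with the level of the operator; this is precisely what the paper's proof supplies, via Lemma \ref{auxiliary lem1} (the CRSS extrapolation description of $M_{1,\infty}$), the duality Lemmas \ref{auxiliary lem2} and \ref{tail lemma}, and Lemma \ref{main lorentz lemma}, where the single-exponent optimisation $\inf_{\epsilon}\epsilon^{-1}t^{1/(1+\epsilon)}$ is used only for \emph{projections} and is then propagated to general $A$ by writing positive operators as $\sum_k\alpha_kP_k$ and using additivity of $\|\cdot\|_{\Lambda_\psi}$ on such sums; the submajorisation at every scale $t$ is finally obtained by rescaling the trace $\tau\to t^{-1}\tau$, not by a pointwise-in-$t$ H\"older argument.

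For (ii): your Chebyshev-plus-$L_q$ estimate for the small part gives, after optimising $q\simeq\log(eI/(\alpha t))$, only $\mu(t/2,\mathcal{T}(A_2))\lesssim\alpha\log\bigl(eI/(\alpha t)\bigr)$ with $\alpha=\mu(t,A)$, $I=\int_t^\infty\mu(s,A)\,ds$, and this is \emph{not} dominated by $c_{abs}(S\mu(A))(t)$: take $\mu(s,A)=\min\{\alpha,\alpha t/s\}\chi_{(0,T)}(s)$; then $(S\mu(A))(t)\le2\alpha$ while your bound is $\approx\alpha\log\log(T/t)\to\infty$ as $T\to\infty$. Indeed for decreasing $\mu\le\alpha$ one always has $\int_t^\infty\mu(s)\frac{ds}{s}\le\alpha\log\bigl(1+\frac{I}{\alpha t}\bigr)$, so your optimised quantity sits on the wrong side of $(C'\mu(A))(t)$; the identity $\inf_q q\,x^{1/q}\simeq\log x$ recovers the logarithm of the \emph{mass} of the tail, not the weighted tail integral. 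The pointwise control of the truncated part genuinely requires the integrated estimate of part (i): as in the paper, with $A_2=\min\{A,\mu(t,A)\}$ one writes $t\mu(t,\mathcal{T}(A_2))\le\int_0^t\mu(s,\mathcal{T}(A_2))\,ds\lesssim\int_0^t(S\mu(A_2))(s)\,ds\le2t(S\mu(A))(t)$, and then Lemma \ref{S by S} and the decomposition of a general $A$ into four positive parts complete the proof. (Two further, smaller, points: in case (ii) the $L_p$/$L_q$ bounds are not hypotheses but must first be produced by interpolating \eqref{weak est} with the $\mathcal{L}_2$ contraction, as the paper does via complex interpolation; and the passage from $(\mathcal{L}_1\cap\mathcal{L}_2)(\mathcal{M})$ to all of $\Lambda_{\log}(\mathcal{M})$ needs more than the Fatou property of $\mathcal{L}_{1,\infty}$, since the asserted conclusion is a pointwise bound by $S\mu(A)$, not membership in a fixed quasi-Banach space.)
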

%Note that, Theorem \ref{first main theorem} is also applicable for the double operator integral associated with a Lipschitz function (see Subsection \ref{doi subsection}).
In particular, this theorem extends \cite[Theorem III.4.8, p. 138]{BSh} and \cite[Theorem III.6.8, p. 160]{BSh}.
Moreover, this theorem is also applicable for the double operator integrals (see Subsection \ref{doi subsection}) associated with  Lipschitz functions $f$ defined on $\mathbb{R}$
(see also \cite[Corollary IV.6.8, 6.9, and 6.10, p. 251]{BSh} for the applications).

\begin{rmk}\label{T dom} By Theorem \ref{weak est for T}, the operator $T$ defined in \eqref{T-oper} satisfies the assumptions of the Theorem \ref{first main theorem}. Therefore,
$T$ is dominated by the operator $S^{d}$ in the following sense
$$\mu(T(A))\leq c_{abs}S^{d}\mu(A), \quad \forall A\in \Lambda_{\log}(H),$$
where $c_{abs}$ is a positive absolute constant.
 Since, the maximal domain of $S^{d}$ is Lorentz space $\Lambda_{\log}(\mathbb{Z}_{+})$ (see \eqref{S dis}),
it follows that $T$ is defined on the Schatten-Lorentz ideal $\Lambda_{\log}(H).$
\end{rmk}

\section{An abstract operator $\mathcal{T}$ and its upper estimate}\label{statement proof}

In this section, we prove our main result Theorem \ref{first main theorem}.
The proof of %Theorem \ref{first main theorem}
that requires some preparation.
\begin{lem}\label{auxiliary lem1} If $X\in\bigcap_{1<p\leq2} \mathcal{L}_p(\mathcal{M})$ is such that $$\sup_{1<p\leq 2}(p-1)\|X\|_{\mathcal{L}_p(\mathcal{M})}<\infty,$$ then
%\begin{equation}\label{sup-est2}
$$\|X\|_{\mathcal{M}_{1,\infty}+(\mathcal{L}_1\cap \mathcal{L}_2)(\mathcal{M})}\leq c_{abs}\cdot\sup_{1<p\leq 2}(p-1)\|X\|_{\mathcal{L}_p(\mathcal{M})}.$$
%\end{equation}
\end{lem}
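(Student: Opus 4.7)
The plan is to decompose $X$ by a spectral cut-off of $|X|$ at the height $\lambda:=\mu(1,X)$. Let $e:=E_{|X|}((\lambda,\infty))$; this is a spectral projection commuting with $|X|$, and since $\mu(1,X)=\lambda$ we have $\tau(e)\leq 1$. Define $X_{\mathrm{top}}:=Xe$ and $X_{\mathrm{rest}}:=X(1-e)$, so that $X=X_{\mathrm{top}}+X_{\mathrm{rest}}$ and, because $(1-e)=E_{|X|}([0,\lambda])$ commutes with $|X|$, one has $|X_{\mathrm{rest}}|=|X|(1-e)\leq \lambda(1-e)$. Writing $M:=\sup_{1<p\leq 2}(p-1)\|X\|_{\mathcal{L}_p(\mathcal{M})}$, the goal is to show $\|X_{\mathrm{top}}\|_{(\mathcal{L}_1\cap \mathcal{L}_2)(\mathcal{M})}\lesssim M$ and $\|X_{\mathrm{rest}}\|_{\mathcal{M}_{1,\infty}(\mathcal{M})}\lesssim M$, which yields the lemma by taking $X=X_{\mathrm{rest}}+X_{\mathrm{top}}$ in the infimum defining the sum-space norm.

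For the top piece, the hypothesis at $p=2$ gives $\|X\|_{\mathcal{L}_2}\leq M$, and thus $\|X_{\mathrm{top}}\|_{\mathcal{L}_2}\leq \|X\|_{\mathcal{L}_2}\leq M$. Moreover, $\lambda^2=\mu(1,X)^2\leq \int_0^1\mu(s,X)^2\,ds\leq M^2$, so $\lambda\leq M$. Applying Proposition~\ref{Holder} (Cauchy--Schwarz) yields
$$\|X_{\mathrm{top}}\|_{\mathcal{L}_1}=\tau(|X|e)\leq \|X\|_{\mathcal{L}_2}\|e\|_{\mathcal{L}_2}\leq M\sqrt{\tau(e)}\leq M,$$
so $X_{\mathrm{top}}\in (\mathcal{L}_1\cap \mathcal{L}_2)(\mathcal{M})$ with norm at most $M$.

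For the rest piece, I estimate $\int_0^t\mu(s,X_{\mathrm{rest}})\,ds$ separately on $(0,e-1]$ and $(e-1,\infty)$. On the first interval, $\mu(s,X_{\mathrm{rest}})\leq \lambda\leq M$ forces $\int_0^t\mu(s,X_{\mathrm{rest}})\,ds\leq tM$; since the map $t\mapsto t/\log(1+t)$ is increasing from $1$ to $e-1$ on $(0,e-1]$, this gives $\int_0^t\mu(s,X_{\mathrm{rest}})\,ds\leq (e-1)M\log(1+t)$. For $t>e-1$, use $\mu(s,X_{\mathrm{rest}})\leq \mu(s,X)$ together with H\"older's inequality at the exponent $p_t:=1+1/\log(1+t)$, which lies in $(1,2)$ because $\log(1+t)>1$:
$$\int_0^t\mu(s,X)\,ds\leq t^{(p_t-1)/p_t}\,\|X\|_{\mathcal{L}_{p_t}}\leq t^{1/(\log(1+t)+1)}\cdot \frac{M}{p_t-1}\leq e\,M\log(1+t),$$
where in the last step $t^{1/(\log(1+t)+1)}\leq t^{1/\log(1+t)}=e^{\log t/\log(1+t)}\leq e$ (using $t>1$ and $\log t\leq \log(1+t)$), and $1/(p_t-1)=\log(1+t)$. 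Combining the two ranges of $t$, $\|X_{\mathrm{rest}}\|_{\mathcal{M}_{1,\infty}}\leq c_{abs}M$, which together with the previous paragraph finishes the proof.

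The main subtlety is the need to decompose $X$ at all: the H\"older estimate alone does not place $X$ into $\mathcal{M}_{1,\infty}$. For example, $\mu(s,X)=s^{-1/4}\chi_{(0,1]}(s)$ satisfies $(p-1)\|X\|_{\mathcal{L}_p}\lesssim 1$ on $(1,2]$, yet $\int_0^t\mu(s,X)\,ds=\tfrac{4}{3}t^{3/4}$ and hence the quotient $\int_0^t\mu(s,X)\,ds/\log(1+t)$ diverges as $t\to 0^+$. The spectral truncation at $\lambda=\mu(1,X)$ is precisely what absorbs such unbounded behaviour of $\mu$ near zero into an $\mathcal{L}_1\cap \mathcal{L}_2$ piece of $\tau$-rank at most $1$, leaving behind a bounded remainder on which the optimised H\"older argument works.
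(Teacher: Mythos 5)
Your proof is correct, and it takes a genuinely different (more self-contained) route than the paper. The paper decomposes at the level of the singular value function, writing $\mu(X)=\mu(X)\chi_{(0,1)}+\mu(X)\chi_{(1,\infty)}$: the head is placed in $L_2(0,1)$ (hence in $\mathcal{L}_1\cap\mathcal{L}_2$) using the observation that $\sup_{1<p\leq 2}(p-1)\|\cdot\|_{L_p(0,1)}$ coincides with the $L_2(0,1)$-norm, and the tail is handled by invoking the extrapolation equivalence $\sup_{1<p\leq2}(p-1)\|a\|_{\ell_p(\mathbb{N})}\approx\|a\|_{M_{1,\infty}(\mathbb{N})}$ from \cite[Theorem 4.5]{CRSS}. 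You instead make the spectral cut $e=E_{|X|}((\mu(1,X),\infty))$ — which is the same decomposition in substance, since $\tau(e)\leq 1$ and $\mu(X(1-e))\leq\min\{\mu(1,X),\mu(X)\}$ — and you replace the citation by a direct proof of the one inequality you need, via H\"older with the optimized exponent $p_t=1+1/\log(1+t)$, which is exactly the elementary mechanism behind that extrapolation result. What your route buys is that the lemma becomes self-contained (no appeal to \cite{CRSS}) and the constants are explicit; what the paper's route buys is brevity and reuse of the $M_{1,\infty}$ description that appears elsewhere in their framework. Two small points worth making explicit in your write-up: the inequality $\tau(E_{|X|}((\mu(1,X),\infty)))\leq 1$ is the standard right-continuity property of $s\mapsto\tau(E_{|X|}((s,\infty)))$ from \cite{FK}, and $|Xe|=|X|e$ holds because $e$ is a spectral projection of $|X|$ and hence commutes with it; both are true but are used silently. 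Your closing example correctly explains why some decomposition is unavoidable, since the optimized H\"older bound alone cannot control $\frac{1}{\log(1+t)}\int_0^t\mu(s,X)\,ds$ as $t\to0^+$.
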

\begin{proof}
If $X\in S(\mathcal{M},\tau)$ is such that $\displaystyle{\sup_{1<p\leq 2}}(p-1)\|X\|_{\mathcal{L}_p(\mathcal{M})}<\infty,$ then by
$$\sum_{k=1}^{\infty}\mu^{p}(k,X)\leq \sum_{k=1}^{\infty}\int_{k-1}^{k}\mu^{p}(s,X)ds=\int_{0}^{\infty}\mu^{p}(s,X)ds,$$
we have
\begin{equation}\label{sup lp est}\begin{split}
\sup_{1<p\leq 2}(p-1)\|\mu(X)\chi_{(0,1)}\|_{L_p(0,\infty)}
+\sup_{1<p\leq 2}(p-1)\|\{\mu(k,X)\}_{k\geq 1}\|_{\ell_p(\mathbb{N})}\\
\leq 2\sup_{1<p\leq 2}(p-1)\|\mu(X)\|_{L_p(0,\infty)}=2\sup_{1<p\leq 2}(p-1)\|X\|_{\mathcal{L}_p(\mathcal{M})},
\end{split}\end{equation}
where $\ell_p(\mathbb{N})$ is the space of all $p$-summable sequences (see \cite[Chapter II, p. 53]{LT}).
Let $X\in S(\mathcal{M},\tau)$ such that $\|\mu(X)\chi_{(0,1)}\|_{L_2(0,\infty)}<\infty.$
Since $L_2(0,1)\subseteq L_p(0,1)$ for $p\in(1,2],$ it follows that
$$\|\mu(X)\chi_{(0,1)}\|_{L_p(0,\infty)}
\leq\|\mu(X)\chi_{(0,1)}\|_{L_2(0,\infty)}, \,\ 1<p\leq 2.$$
Taking supremum, over $1<p\leq 2,$ we obtain
\begin{equation}\label{l2approx}
\sup_{1<p\leq 2}(p-1)\|\mu(X)\chi_{(0,1)}\|_{L_p(0,\infty)}=\|\mu(X)\chi_{(0,1)}\|_{L_2(0,\infty)}.
\end{equation}
On the other hand, by Theorem 4.5 in \cite{CRSS}, we have
\begin{equation}\label{CRSS}
\sup_{1<p\leq 2}(p-1)\|\{\mu(k,X)\}_{k\geq 1}\|_{\ell_p(\mathbb{N})}\approx\|\{\mu(k,X)\}_{k\geq 1}\|_{M_{1,\infty}(\mathbb{N})},
\end{equation}
(see \eqref{Marsecuence}).
Combining \eqref{sup lp est}, \eqref{l2approx}, and  \eqref{CRSS}, we obtain
\begin{eqnarray*}\begin{split}\|X\|_{\mathcal{M}_{1,\infty}+(\mathcal{L}_1\cap \mathcal{L}_2)(\mathcal{M})}
&\lesssim\|\mu(X)\chi_{(0,1)}\|_{L_2(0,\infty)}+\|\mu(X)\chi_{(1,\infty)}\|_{\mathcal{M}_{1,\infty}(\mathcal{M})}\\
&\approx\|\mu(X)\chi_{(0,1)}\|_{L_2(0,\infty)}+\|\{\mu(k,X)\}_{k\geq 1}\|_{M_{1,\infty}(\mathbb{N})}\\
&\approx\sup_{1<p\leq 2}(p-1)\|\mu(X)\chi_{(0,1)}\|_{L_p(0,\infty)}\\
&+\sup_{1<p\leq 2}(p-1)\|\{\mu(k,X)\}_{k\geq 1}\|_{\ell_p(\mathbb{N})}\\
&\approx\sup_{1<p\leq 2}(p-1)\|X\|_{\mathcal{L}_p(\mathcal{M})}.
\end{split}\end{eqnarray*}
\end{proof}
\begin{lem}\label{auxiliary lem2} The following
%\begin{equation}\label{comdual}
$$\Big(\mathcal{M}_{1,\infty}+(\mathcal{L}_1\cap \mathcal{L}_2)(\mathcal{M})\Big)^{\times}=\Lambda_{\log}\cap(\mathcal{L}_2+ \mathcal{L}_\infty)(\mathcal{M})$$
%\end{equation}
is isometric.
\end{lem}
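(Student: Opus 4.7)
The plan is to derive the identity by two applications of Lozanovski\v{i}'s duality formulae, combined with the classical duality between the non-commutative Marcinkiewicz ideal and the Macaev/Lorentz ideal. The non-commutative versions of the formulae in \eqref{sum and inter dual} transfer verbatim via the identification $\mathcal{E}(\mathcal{M},\tau)^{\times}=\mathcal{E}^{\times}(\mathcal{M},\tau)$ (with equality of norms) recalled in Subsection \ref{Kote dual}; in fact, for any two symmetric (quasi-)Banach operator spaces $\mathcal{E},\mathcal{F}$, the isometric identities
$$(\mathcal{E}+\mathcal{F})^{\times}=\mathcal{E}^{\times}\cap\mathcal{F}^{\times},\qquad (\mathcal{E}\cap\mathcal{F})^{\times}=\mathcal{E}^{\times}+\mathcal{F}^{\times}$$
(with sum norms given by infimum and intersection norms by maximum) are immediate from the definition of the K\"othe dual by taking suprema against decompositions.

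First I apply the sum-formula to $\mathcal{M}_{1,\infty}+(\mathcal{L}_1\cap\mathcal{L}_2)(\mathcal{M})$:
$$\bigl(\mathcal{M}_{1,\infty}+(\mathcal{L}_1\cap\mathcal{L}_2)(\mathcal{M})\bigr)^{\times}=\mathcal{M}_{1,\infty}(\mathcal{M})^{\times}\cap\bigl((\mathcal{L}_1\cap\mathcal{L}_2)(\mathcal{M})\bigr)^{\times}.$$
Then I apply the intersection-formula to $\mathcal{L}_1\cap\mathcal{L}_2$, together with the standard self-duality $\mathcal{L}_1^{\times}=\mathcal{L}_{\infty}$ and $\mathcal{L}_2^{\times}=\mathcal{L}_2$, to obtain
$$\bigl((\mathcal{L}_1\cap\mathcal{L}_2)(\mathcal{M})\bigr)^{\times}=(\mathcal{L}_{\infty}+\mathcal{L}_2)(\mathcal{M}),$$
isometrically.

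Next I invoke the non-commutative duality $\mathcal{M}_{1,\infty}(\mathcal{M})^{\times}=\Lambda_{\log}(\mathcal{M})$ (isometric). This is the operator version of the classical Marcinkiewicz-Macaev duality, and reduces via $\mathcal{E}(\mathcal{M})^{\times}=\mathcal{E}^{\times}(\mathcal{M})$ to the commutative fact $M_{1,\infty}(0,\infty)^{\times}=\Lambda_{\log}(0,\infty)$, which in turn follows from a one-line argument combining the definition \eqref{kote dual norm} with the Hardy--Littlewood inequality $\int_0^{\infty}|xy|\leq\int_0^{\infty}\mu(s,x)\mu(s,y)\,ds$, the identification $\|x\|_{M_{1,\infty}}=\sup_{t>0}\tfrac{1}{\log(1+t)}\int_0^t\mu(s,x)\,ds$, and an Abel/integration-by-parts argument relating $\int_0^{\infty}\mu(s,x)\mu(s,y)\,ds$ to $\int_0^{\infty}\mu(s,y)\,d\log(1+s)$. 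Substituting these two identifications into the display above yields
$$\bigl(\mathcal{M}_{1,\infty}+(\mathcal{L}_1\cap\mathcal{L}_2)(\mathcal{M})\bigr)^{\times}=\Lambda_{\log}(\mathcal{M})\cap(\mathcal{L}_2+\mathcal{L}_{\infty})(\mathcal{M}),$$
isometrically, as claimed.

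The routine part is the double application of Lozanovski\v{i}'s formulae; the only point requiring care is ensuring that the constants are exactly one, i.e.\ that the identifications are isometric and not merely equivalent. For the sum/intersection formulae this is automatic from the definitions of the respective norms and of $\|\cdot\|_{\mathcal{E}^{\times}}$. The main obstacle is therefore the isometric version of $\mathcal{M}_{1,\infty}(\mathcal{M})^{\times}=\Lambda_{\log}(\mathcal{M})$: achieving equality of norms (not just equivalence up to an absolute constant, as in \eqref{CRSS}) requires the sharp Hardy--Littlewood--P\'olya comparison together with an exact Abel summation/integration-by-parts computation, which one extracts from \cite{LSZ} (or proves directly on decreasing rearrangements and then transfers to $(\mathcal{M},\tau)$ through the standard dictionary $\mathcal{E}(\mathcal{M})^{\times}=\mathcal{E}^{\times}(\mathcal{M})$).
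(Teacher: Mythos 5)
Your argument is correct and follows essentially the same route as the paper: a double application of Lozanovski\v{i}'s sum/intersection duality formulae \eqref{sum and inter dual}, transferred to the operator setting via $\mathcal{E}(\mathcal{M},\tau)^{\times}=\mathcal{E}^{\times}(\mathcal{M},\tau)$, together with $\mathcal{L}_1^{\times}=\mathcal{L}_{\infty}$, $\mathcal{L}_2^{\times}=\mathcal{L}_2$ and the Marcinkiewicz--Lorentz duality $\mathcal{M}_{1,\infty}^{\times}(\mathcal{M})=\Lambda_{\log}(\mathcal{M})$, which the paper simply cites (\cite[Theorem II.5.4]{KPS}, \cite[Theorem 2.6.14]{LSZ}, \cite[Example IV. 3.13 (a),(b), p. 32]{DdePS}) rather than reproving via your Hardy--Littlewood/Abel sketch. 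One small caveat: the intersection formula $(\mathcal{E}\cap\mathcal{F})^{\times}=\mathcal{E}^{\times}+\mathcal{F}^{\times}$ is not ``immediate from the definition'' (only the inclusion $\mathcal{E}^{\times}+\mathcal{F}^{\times}\subseteq(\mathcal{E}\cap\mathcal{F})^{\times}$ is), but since you, like the paper, ultimately invoke Lozanovski\v{i}'s theorem for it, this does not affect the validity of the proof.
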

\begin{proof}
Since
$$\Big(\mathcal{M}_{1,\infty}+(\mathcal{L}_1\cap \mathcal{L}_2)(\mathcal{M})\Big)^{\times}=\mathcal{M}_{1,\infty}^{\times}(\mathcal{M})\cap(\mathcal{L}_1\cap \mathcal{L}_2)^{\times}(\mathcal{M})$$ and $$(\mathcal{L}_1\cap
\mathcal{L}_2)^{\times}(\mathcal{M})=\mathcal{L}_1^{\times}(\mathcal{M})+\mathcal{L}_{2}^{\times}(\mathcal{M})$$ by \eqref{sum and inter dual}, it follows from \cite[Example IV. 3.13 (a),(b), p. 32]{DdePS} and \cite[Theorem II.5.4]{KPS} (see also
\cite[Theorem 2.6.14]{LSZ}) that
$$\Big(\mathcal{M}_{1,\infty}+(\mathcal{L}_1\cap \mathcal{L}_2)(\mathcal{M})\Big)^{\times}=\mathcal{M}_{1,\infty}^{\times}(\mathcal{M})\cap(\mathcal{L}_1\cap \mathcal{L}_2)^{\times}(\mathcal{M})$$
$$=\mathcal{M}_{1,\infty}^{\times}(\mathcal{M})\cap \Big(\mathcal{L}^{\times}_1(\mathcal{M})+ \mathcal{L}_2^{\times}(\mathcal{M})\Big)=\Lambda_{\log}\cap(\mathcal{L}_2+ \mathcal{L}_\infty)(\mathcal{M}).$$
\end{proof}

\begin{lem}\label{tail lemma}
Let $\mathcal{M}$ be a von Neumann algebra which satisfies the assumption in Theorem \ref{first main theorem} and let $\mathcal{T}$ satisfy the assumption in Theorem \ref{first main theorem} (i).
Then for each $A\in \mathcal{L}_2(\mathcal{M}),$ we have
$$\|\mathcal{T}(A)\|_{(\mathcal{L}_2+\mathcal{L}_{\infty})(\mathcal{M})}\leq c_{abs}\|A\|_{\Lambda_{\log}\cap(\mathcal{L}_2+\mathcal{L}_{\infty})(\mathcal{M})}.$$
\end{lem}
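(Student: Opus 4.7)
The plan is to exploit duality: the target space $(\mathcal{L}_2+\mathcal{L}_\infty)(\mathcal{M})$ is the K\"othe dual of $(\mathcal{L}_1\cap\mathcal{L}_2)(\mathcal{M})$ (by the Lozanovski\v{i} identity \eqref{sum and inter dual} applied to $\mathcal{L}_1\cap\mathcal{L}_2$), and both spaces have the Fatou property, so norms in $(\mathcal{L}_2+\mathcal{L}_\infty)(\mathcal{M})$ can be computed by dualising. Precisely, for $A\in\mathcal{L}_2(\mathcal{M})$ one has
\[
\|\mathcal{T}(A)\|_{(\mathcal{L}_2+\mathcal{L}_\infty)(\mathcal{M})}=\sup\bigl\{|\tau(\mathcal{T}(A)B)|:\,B\in(\mathcal{L}_1\cap\mathcal{L}_2)(\mathcal{M}),\;\|B\|_{(\mathcal{L}_1\cap\mathcal{L}_2)(\mathcal{M})}\le 1\bigr\}.
\]
Because $\mathcal{T}$ is self-adjoint on $\mathcal{L}_2(\mathcal{M})$ and both $A$ and any admissible $B$ lie in $\mathcal{L}_2(\mathcal{M})$, I can rewrite $\tau(\mathcal{T}(A)B)=\tau(A\,\mathcal{T}(B))$, which shifts the operator onto the test element.

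Next I would estimate $\tau(A\,\mathcal{T}(B))$ by the non-commutative H\"older inequality (Proposition \ref{Holder}) using the dual pair from Lemma \ref{auxiliary lem2}, namely
\[
\bigl(\mathcal{M}_{1,\infty}+(\mathcal{L}_1\cap\mathcal{L}_2)(\mathcal{M})\bigr)^{\times}=\Lambda_{\log}\cap(\mathcal{L}_2+\mathcal{L}_\infty)(\mathcal{M}),
\]
which gives
\[
|\tau(A\,\mathcal{T}(B))|\le \|A\|_{\Lambda_{\log}\cap(\mathcal{L}_2+\mathcal{L}_\infty)(\mathcal{M})}\cdot\|\mathcal{T}(B)\|_{\mathcal{M}_{1,\infty}+(\mathcal{L}_1\cap\mathcal{L}_2)(\mathcal{M})}.
\]
It then remains to control the second factor uniformly in $B$. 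For this I invoke Lemma \ref{auxiliary lem1}, which bounds the $\mathcal{M}_{1,\infty}+(\mathcal{L}_1\cap\mathcal{L}_2)(\mathcal{M})$-norm of $\mathcal{T}(B)$ by $c_{abs}\,\sup_{1<p\le 2}(p-1)\|\mathcal{T}(B)\|_{\mathcal{L}_p(\mathcal{M})}$.

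The quantitative hypothesis \eqref{p-est} gives $(p-1)\|\mathcal{T}(B)\|_{\mathcal{L}_p(\mathcal{M})}\lesssim \|B\|_{\mathcal{L}_p(\mathcal{M})}$ for each $1<p\le 2$. A routine log-convexity estimate for $L_p$-norms (with $\theta=(2-p)/p$) yields
\[
\|B\|_{\mathcal{L}_p(\mathcal{M})}\le \|B\|_{\mathcal{L}_1(\mathcal{M})}^{\theta}\|B\|_{\mathcal{L}_2(\mathcal{M})}^{1-\theta}\le \|B\|_{(\mathcal{L}_1\cap\mathcal{L}_2)(\mathcal{M})},
\]
valid uniformly in $p\in(1,2]$. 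Taking the supremum over $p$ and stringing the inequalities together produces the desired bound with an absolute constant. The chief point requiring care is the duality step: one must verify that the norm on $(\mathcal{L}_2+\mathcal{L}_\infty)(\mathcal{M})$ coincides with the K\"othe-dual norm against $(\mathcal{L}_1\cap\mathcal{L}_2)(\mathcal{M})$, which is standard since $(\mathcal{L}_2+\mathcal{L}_\infty)(\mathcal{M})$ has the Fatou property and equals its second K\"othe dual. Every other step is a direct application of the already-established lemmas.
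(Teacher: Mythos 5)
Your proposal is correct and follows essentially the same route as the paper: dualise the $(\mathcal{L}_2+\mathcal{L}_{\infty})$-norm against the unit ball of $(\mathcal{L}_1\cap\mathcal{L}_2)(\mathcal{M})$, move $\mathcal{T}$ onto the test element using self-adjointness on $\mathcal{L}_2(\mathcal{M})$, apply H\"older with the dual pair of Lemma \ref{auxiliary lem2}, and control $\|\mathcal{T}(B)\|_{\mathcal{M}_{1,\infty}+(\mathcal{L}_1\cap\mathcal{L}_2)(\mathcal{M})}$ via Lemma \ref{auxiliary lem1} together with the hypothesis \eqref{p-est}. The only differences are cosmetic (order of the two steps, writing the pairing without the adjoint, and spelling out the log-convexity bound $\sup_{1<p\le 2}\|B\|_{\mathcal{L}_p}\le\|B\|_{(\mathcal{L}_1\cap\mathcal{L}_2)}$ which the paper asserts directly).
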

\begin{proof} We split the argument into several steps.
It is easy to see that $\mathcal{L}_2(\mathcal{M})\subset\Lambda_{\log}\cap(\mathcal{L}_2+\mathcal{L}_{\infty})(\mathcal{M}).$
Indeed, for any $X\in \mathcal{L}_2(\mathcal{M})$ using Cauchy-Schwarz inequality, we obtain
\begin{eqnarray*}\begin{split}\|X\|_{\Lambda_{\log}\cap(\mathcal{L}_2+\mathcal{L}_{\infty})(\mathcal{M})}
&=\max\{\|X\|_{\Lambda_{\log}(\mathcal{M})},\|X\|_{(\mathcal{L}_2+\mathcal{L}_{\infty})(\mathcal{M})}\}\\
&\leq\|X\|_{\Lambda_{\log}(\mathcal{M})}+\|X\|_{(\mathcal{L}_2+\mathcal{L}_{\infty})(\mathcal{M})}\\
&\leq\int_{0}^{\infty}\mu(s,X)\frac{1}{1+s}ds+\|X\|_{\mathcal{L}_2(\mathcal{M})}\leq2\|X\|_{\mathcal{L}_2(\mathcal{M})}.
\end{split}\end{eqnarray*}

{\bf Step 1.} Let $Y\in (\mathcal{L}_1\cap \mathcal{L}_2)(\mathcal{M}).$ By assumption \eqref{p-est}, we have
$$(p-1)\|\mathcal{T}(Y)\|_{\mathcal{L}_p(\mathcal{M})}\lesssim\|Y\|_{\mathcal{L}_p(\mathcal{M})},\quad 1<p\leq 2.$$
Thus, taking supremum over $p\in(1,2],$ we obtain
\begin{equation}\label{sup-est}\sup_{1<p\leq 2}(p-1)\|\mathcal{T}(Y)\|_{\mathcal{L}_p(\mathcal{M})}\lesssim\sup_{1<p\leq 2}\|Y\|_{\mathcal{L}_p(\mathcal{M})}\leq c_{abs}\|Y\|_{(\mathcal{L}_1\cap \mathcal{L}_2)(\mathcal{M})}.
\end{equation}
Combining \eqref{sup-est} and Lemma \ref{auxiliary lem1}, we obtain
\begin{equation}\label{step1}
\|\mathcal{T}(Y)\|_{\mathcal{M}_{1,\infty}+(\mathcal{L}_1\cap \mathcal{L}_2)(\mathcal{M})}\leq c_{abs}\|Y\|_{(\mathcal{L}_1\cap \mathcal{L}_2)(\mathcal{M})}, \,\,\ \ Y\in(\mathcal{L}_1\cap \mathcal{L}_2)(\mathcal{M}).
\end{equation}

{\bf Step 2.} Now let $A\in \mathcal{L}_2(\mathcal{M}).$
By Definition \ref{duality}, we have
$$\|X\|_{(\mathcal{L}_2+\mathcal{L}_{\infty})(\mathcal{M})}=\sup_{\|Y\|_{(\mathcal{L}_1\cap \mathcal{L}_2)(\mathcal{M})}\leq 1}|\tau(XY^*)|, \,\ X\in (\mathcal{L}_2+\mathcal{L}_{\infty})(\mathcal{M}).
$$
Thus, for any $A\in \mathcal{L}_2(\mathcal{M}),$ we obtain
$$\|\mathcal{T}(A)\|_{(\mathcal{L}_2+\mathcal{L}_{\infty})(\mathcal{M})}=\sup_{\|Y\|_{(\mathcal{L}_1\cap \mathcal{L}_2)(\mathcal{M})}\leq 1}|\tau(\mathcal{T}(A)Y^*)|.$$
Since $A,Y\in \mathcal{L}_2(\mathcal{M})$ and by assumption $\mathcal{T}$ is self-adjoint in $\mathcal{L}_{2}(\mathcal{M})$, it follows that
\begin{equation}\label{T-eq}
\|\mathcal{T}(A)\|_{(\mathcal{L}_2+\mathcal{L}_{\infty})(\mathcal{M})}=\sup_{\|Y\|_{(\mathcal{L}_1\cap \mathcal{L}_2)(\mathcal{M})}\leq 1}|\tau(A(\mathcal{T}(Y))^*)|.
\end{equation}

So, by Lemma \ref{auxiliary lem2} and
H\"older's inequality (see Proposition \ref{Holder}),
we have
$$|\tau(A\mathcal{T}(Y)^*)|\leq c_{abs}\|A\|_{\Lambda_{\log}\cap(\mathcal{L}_2+\mathcal{L}_{\infty})(\mathcal{M})}\|\mathcal{T}(Y)\|_{\mathcal{M}_{1,\infty}+(\mathcal{L}_1\cap \mathcal{L}_2)(\mathcal{M})}.$$
By \eqref{step1}, we obtain
\begin{equation}\label{T-est}
|\tau(A(\mathcal{T}(Y))^*)|\leq c_{abs}\|A\|_{\Lambda_{\log}\cap(\mathcal{L}_2+\mathcal{L}_{\infty})(\mathcal{M})}\|Y\|_{(\mathcal{L}_1\cap \mathcal{L}_2)(\mathcal{M})}.
\end{equation}
Thus, taking supremum in \eqref{T-est} over all $Y\in(\mathcal{L}_1\cap \mathcal{L}_{2})(\mathcal{M})$ and using \eqref{T-eq}, we obtain for each $A\in \mathcal{L}_2(\mathcal{M})$
$$\|\mathcal{T}(A)\|_{(\mathcal{L}_2+\mathcal{L}_{\infty})(\mathcal{M})}\leq c_{abs}\cdot\sup_{\|Y\|_{(\mathcal{L}_1\cap \mathcal{L}_2)(\mathcal{M})}\leq 1}\|A\|_{\Lambda_{\log}\cap(\mathcal{L}_2+\mathcal{L}_{\infty})(\mathcal{M})}\|Y\|_{(\mathcal{L}_1\cap \mathcal{L}_2)(\mathcal{M})}.$$
In other words, we have
$$\|\mathcal{T}(A)\|_{(\mathcal{L}_2+\mathcal{L}_{\infty})(\mathcal{M})}\leq c_{abs}\|A\|_{\Lambda_{\log}\cap(\mathcal{L}_2+\mathcal{L}_{\infty})(\mathcal{M})}, \,\ \forall A\in \mathcal{L}_2(\mathcal{M}).$$
\end{proof}

\begin{lem}\label{main lorentz lemma}

Let the assumptions of Lemma \ref{tail lemma} hold.
We have
$$\|\mathcal{T}(A)\|_{(\mathcal{L}_1+\mathcal{L}_{\infty})(\mathcal{M})}\leq c_{abs}\|A\|_{\Lambda_{\psi}(\mathcal{M})}, \,\,\  A\in\Lambda_{\psi}(\mathcal{M}),$$
where
%\begin{equation}\label{psi}
$$\psi(t)=\left\{
                                                                                               \begin{array}{ll}
                                                                                                 t\log(\frac{e^2}{t}), & 0<t\leq 1\hbox{,} \\
                                                                                                 2\log(et), & 1\leq t<\infty\hbox{.}
                                                                                               \end{array}
                                                                                             \right.$$
%\end{equation}
\end{lem}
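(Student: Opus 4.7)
The approach is a head/tail decomposition of $A$ at the spectral cutoff corresponding to rank $1$. By Convention \ref{main setting} and order continuity of the norm on $\Lambda_\psi$, it suffices to prove the estimate on $\Lambda_\psi(\mathcal{M})\cap \mathcal{L}_2(\mathcal{M})$. I would set $e=E_{|A|}(\mu(1,A),\infty)$, $A_1=Ae$ and $A_2=A-A_1$, so that $\mu(A_1)$ is supported on $[0,1]$, $\|A_2\|_{\mathcal{L}_\infty(\mathcal{M})}\leq \mu(1,A)$ and $\mu(A_2)\leq \mu(A)$.

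For the tail $A_2$, the comparisons $\frac{1}{1+s}\leq 1-\log s$ on $(0,1]$ and $\frac{1}{1+s}\leq \frac{2}{s}$ on $[1,\infty)$, combined with $\mu(1,A)\leq \int_0^1\mu(s,A)(1-\log s)\,ds\leq \|A\|_{\Lambda_\psi(\mathcal{M})}$, yield $\|A_2\|_{\Lambda_{\log}\cap(\mathcal{L}_2+\mathcal{L}_\infty)(\mathcal{M})}\lesssim \|A\|_{\Lambda_\psi(\mathcal{M})}$. Lemma \ref{tail lemma} then delivers $\|\mathcal{T}(A_2)\|_{(\mathcal{L}_2+\mathcal{L}_\infty)(\mathcal{M})}\lesssim \|A\|_{\Lambda_\psi(\mathcal{M})}$, and Cauchy--Schwarz on $[0,1]$ (that is, the embedding $\|\cdot\|_{\mathcal{L}_1+\mathcal{L}_\infty}\leq \|\cdot\|_{\mathcal{L}_2+\mathcal{L}_\infty}$) upgrades this to the desired bound on $A_2$.

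For the head $A_1$, I would dyadically decompose $A_1=\sum_{k\geq 0}D_k$ with $D_k=A\,E_{|A|}\bigl(\mu(2^{-k},A),\mu(2^{-k-1},A)\bigr]$, so that $\mu(s,D_k)\leq \mu(2^{-k-1},A)\,\chi_{[0,2^{-k-1})}(s)$. Choosing $p_k=1+\frac{1}{k+1}\in(1,2]$, the hypothesis \eqref{p-est} together with the Hölder embedding $\|\cdot\|_{\mathcal{L}_1+\mathcal{L}_\infty}\leq \|\cdot\|_{\mathcal{L}_{p_k}}$ gives
\[
\|\mathcal{T}(D_k)\|_{(\mathcal{L}_1+\mathcal{L}_\infty)(\mathcal{M})}\lesssim (k+1)\,\mu(2^{-k-1},A)\,(2^{-k-1})^{1/p_k}\lesssim (k+1)\,\mu(2^{-k-1},A)\,2^{-k},
\]
where the last inequality uses $(2^{-k-1})^{1/p_k}=2^{-(k+1)^2/(k+2)}\leq 2\cdot 2^{-(k+1)}=2^{-k}$. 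Summing over $k$, reindexing $j=k+1$, and using the pointwise bound $1-\log s\geq j\log 2$ on $[2^{-j-1},2^{-j}]$ yields
\[
\sum_{k\geq 0}(k+1)\mu(2^{-k-1},A)\,2^{-k}=2\sum_{j\geq 1}j\,\mu(2^{-j},A)\,2^{-j}\lesssim \int_0^1\mu(s,A)(1-\log s)\,ds\leq \|A\|_{\Lambda_\psi(\mathcal{M})}.
\]

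The main obstacle is the matching in the head estimate: the blow-up $\|\mathcal{T}\|_{\mathcal{L}_p\to \mathcal{L}_p}\sim 1/(p-1)$ produces a linear-in-$k$ cost on each dyadic level, and this cost must be absorbed exactly by the logarithmic weight $1-\log s\sim k\log 2$ in $\psi$ on $(0,1]$. The tuning $p_k=1+\frac{1}{k+1}$ together with the exponent bound $(2^{-k-1})^{1/p_k}\leq 2^{-k}$ is precisely what makes the dyadic sum reconstruct $\|A\|_{\Lambda_\psi}$ up to absolute constants; this is where the content of the lemma lies.
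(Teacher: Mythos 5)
Your plan is correct, and it reaches the estimate by a genuinely different organization than the paper, although the two analytic pillars are the same: Lemma \ref{tail lemma} (through $\mathcal{L}_2+\mathcal{L}_\infty$ and the embedding $\|\cdot\|_{\mathcal{L}_1+\mathcal{L}_\infty}\leq\|\cdot\|_{\mathcal{L}_2+\mathcal{L}_\infty}$ on the bounded part) and the hypothesis \eqref{p-est} with an exponent tuned logarithmically to the trace of the support (your $p_k=1+\tfrac{1}{k+1}$ at scale $2^{-k}$ is exactly the paper's choice $\epsilon=1/\log(1/t)$ at $t\approx 2^{-k}$). The paper instead proves the bound first for single projections of small and large trace (Steps 1--2), then passes to positive operators of the layer-cake form $A=\sum_k\alpha_kP_k$, where $\|A\|_{\Lambda_\psi}=\sum_k\alpha_k\psi(\tau(P_k))$ makes the triangle inequality match the Lorentz norm exactly, and finally extends by density of such simple operators in $\Lambda_\psi(\mathcal{M})$. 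Your route avoids the reduction to positive simple operators by cutting a general $A\in\Lambda_\psi\cap\mathcal{L}_2$ at the level $\mu(1,A)$ and dyadically slicing the unbounded head; what it buys is a direct estimate on arbitrary elements with the Lorentz norm reconstructed from the weight $1-\log s$, at the price of an infinite decomposition whose summability you must justify. Both versions still need the same final density/extension step ($\Lambda_\psi\cap\mathcal{L}_2$ dense in $\Lambda_\psi$ via order continuity, i.e.\ Proposition \ref{Separab}; the paper uses the analogous separability of $\Lambda_\psi(\mathcal{M})$ in its Step 4).

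Two small points to repair when writing this up. First, for $D_k=A\,E_{|A|}\bigl(\mu(2^{-k},A),\mu(2^{-k-1},A)\bigr]$ one only has $\tau\bigl(E_{|A|}(\mu(2^{-k},A),\infty)\bigr)\leq 2^{-k}$, so the correct bound is $\mu(s,D_k)\leq\mu(2^{-k-1},A)\chi_{[0,2^{-k})}(s)$ rather than $\chi_{[0,2^{-k-1})}(s)$; this only doubles the absolute constant, since $(2^{-k})^{1/p_k}\leq 2\cdot 2^{-k}$. Second, the inequality $\|\mathcal{T}(A_1)\|_{\mathcal{L}_1+\mathcal{L}_\infty}\leq\sum_k\|\mathcal{T}(D_k)\|_{\mathcal{L}_1+\mathcal{L}_\infty}$ over the infinite sum needs a word: the partial sums $\sum_{k\leq N}D_k$ converge to $A_1$ in $\mathcal{L}_2(\mathcal{M})$ (dominated convergence for $\int_0^{2^{-N}}\mu(s,A)^2\,ds$), hence $\sum_{k\leq N}\mathcal{T}(D_k)\to\mathcal{T}(A_1)$ in $\mathcal{L}_2$, and then one concludes by lower semicontinuity of $\|X\|_{\mathcal{L}_1+\mathcal{L}_\infty}=\int_0^1\mu(s,X)\,ds$ under convergence in measure. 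With these routine additions your argument is complete and matches the paper's constant-level conclusion.
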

\begin{proof} The proof will be divided into several steps.

{\bf Step 1.} Suppose first that $A$ is a projection and let $\tau(A)=t\in[0,1].$ We claim that
\begin{equation}\label{claim step1}
\|\mathcal{T}(A)\|_{(\mathcal{L}_1+\mathcal{L}_{\infty})(\mathcal{M})}\leq e\psi(t).
\end{equation}
By \eqref{L+L}, we have
$$
\|X\|_{(\mathcal{L}_1+\mathcal{L}_{\infty})(\mathcal{M})}\leq\inf_{\epsilon\in(0,1)}\|X\|_{\mathcal{L}_{1+\epsilon}(\mathcal{M})}, \,\,  X\in \mathcal{L}_{1+\epsilon}(\mathcal{M}).
$$
Thus,
\begin{equation}\label{inf-est}
\|\mathcal{T}(A)\|_{(\mathcal{L}_1+\mathcal{L}_{\infty})(\mathcal{M})}\leq\inf_{\epsilon\in(0,1)}\|\mathcal{T}(A)\|_{\mathcal{L}_{1+\epsilon}(\mathcal{M})}.
\end{equation}
Applying \eqref{p-est} and recalling that $\|A\|_{\mathcal{L}_{1+\varepsilon}(\mathcal{M})}=t^{\frac1{1+\epsilon}}$, we obtain
$$\|\mathcal{T}(A)\|_{\mathcal{L}_{1+\epsilon}(\mathcal{M})}\leq\epsilon^{-1}\|A\|_{\mathcal{L}_{1+\epsilon}(\mathcal{M})}=\epsilon^{-1}t^{\frac1{1+\epsilon}}.$$
Hence, taking infimum over all $\varepsilon\in(0,1)$ from the preceding inequality and by \eqref{inf-est}, we have
\begin{equation}\label{Teps-est}
\|\mathcal{T}(A)\|_{(\mathcal{L}_1+\mathcal{L}_{\infty})(\mathcal{M})}\leq\inf_{\epsilon\in(0,1)}\epsilon^{-1}t^{\frac1{1+\epsilon}}.
\end{equation}
For $t<\frac1e,$ set $\epsilon=\frac1{\log(\frac1t)}.$
We have
\begin{equation}\label{comb1}
\|\mathcal{T}(A)\|_{(\mathcal{L}_1+\mathcal{L}_{\infty})(\mathcal{M})}\leq t\log\left(\frac1t\right)\cdot \left(\frac1t\right)^{\frac{\epsilon}{1+\epsilon}}=t\log\left(\frac1t\right)\cdot e^{\frac1{1+\epsilon}}\leq et\log\left(\frac1t\right)\leq e\psi(t).
\end{equation}
If $t\in[\frac1e,1],$ then setting $\epsilon=1,$ from \eqref{Teps-est} we obtain
\begin{equation}\label{comb2}
\|\mathcal{T}(A)\|_{(\mathcal{L}_1+\mathcal{L}_{\infty})(\mathcal{M})}\leq t^{\frac12}\leq 1\leq e\psi(t).
\end{equation}
A combination of the \eqref{comb1} and \eqref{comb2} establishes the claim \eqref{claim step1} of Step 1.

{\bf Step 2.} Suppose now that $A$ is a projection and let $\tau(A)=t\in[1,\infty).$ We claim that
\begin{equation}\label{claim step2}
\|\mathcal{T}(A)\|_{(\mathcal{L}_1+\mathcal{L}_{\infty})(\mathcal{M})}\leq c_{abs}\psi(t).
\end{equation}
Since $\mathcal{L}_{2}(\mathcal{M}),\mathcal{L}_{\infty}(\mathcal{M})\subset(\mathcal{L}_1+\mathcal{L}_{\infty})(\mathcal{M})$ by \eqref{imbed},
 it follows that
\begin{equation}\label{x-est}
\|X\|_{(\mathcal{L}_1+\mathcal{L}_{\infty})(\mathcal{M})}\leq c_{abs}\|X\|_{(\mathcal{L}_2+\mathcal{L}_{\infty})(\mathcal{M})}, \,\, X\in (\mathcal{L}_2+\mathcal{L}_{\infty})(\mathcal{M}).
\end{equation}
By \eqref{x-est} and Lemma \ref{tail lemma}, we have
$$\|\mathcal{T}(A)\|_{(\mathcal{L}_1+\mathcal{L}_{\infty})(\mathcal{M})}\leq c_{abs}\|A\|_{\Lambda_{\log}(\mathcal{M})\cap(\mathcal{L}_2+\mathcal{L}_{\infty})(\mathcal{M})}.$$
Since $A$ is a projection with $\tau(A)>1,$ it follows from the preceding inequality that
$$\|\mathcal{T}(A)\|_{(\mathcal{L}_1+\mathcal{L}_{\infty})(\mathcal{M})}\leq c_{abs}\log(1+t)\leq c_{abs}\psi(t).$$

{\bf Step 3.} Let $A$ be a positive operator of the form

$$
A=\sum_{k=1}^n\alpha^{'}_kP^{'}_k,
$$
where, $\alpha^{'}_k\in (0,\infty)$ and the $P^{'}_k$ are pairwise orthogonal projections with finite trace. Rearranging the summation, we may assume that $\{\alpha^{'}_k\}_{k=1}^{n}$ is increasing. Let
$\alpha_k=\alpha^{'}_k-\alpha^{'}_{k+1}$ and $P_k=P_1^{'}+\cdots+P_k^{'}$
for $1\leq k\leq n$ with $\alpha^{'}_{n+1}=0.$ Then $\{P_k\}_{k=1}^{n}$ is an increasing sequence of projections and
\begin{equation}\label{simple form}
A=\sum_{k=1}^n\alpha_kP_k.
\end{equation}
Since $\mu(A)=\sum_{k=1}^n\alpha_k\chi_{[0,\tau(P_k))}$ (see \cite[Example III. 2.2 (i), p. 10]{DdePS}), it follows from \eqref{Lphi} that
\begin{equation}\label{xi-eq}
\|A\|_{\Lambda_{\psi}(\mathcal{M})}=\sum_{k=1}^n\alpha_k\psi(\tau(P_k)).
\end{equation}
On the other hand, by the triangle inequality, we have
\begin{equation}\label{sum est}
\|\mathcal{T}(A)\|_{(\mathcal{L}_1+\mathcal{L}_{\infty})(\mathcal{M})}\leq\sum_{k=1}^n\alpha_k\|\mathcal{T}(P_k)\|_{(\mathcal{L}_1+\mathcal{L}_{\infty})(\mathcal{M})}.
\end{equation}
By \eqref{claim step1} and \eqref{claim step2}, we have
\begin{equation}\label{pk-est}\|\mathcal{T}(P_k)\|_{(\mathcal{L}_1+\mathcal{L}_{\infty})(\mathcal{M})}\leq c_{abs}\psi(\tau(P_k)).
\end{equation}
Thus, combining \eqref{xi-eq}, \eqref{sum est}, and \eqref{pk-est}, we obtain
\begin{equation}\label{step-3}\|\mathcal{T}(A)\|_{(\mathcal{L}_1+\mathcal{L}_{\infty})(\mathcal{M})}\leq c_{abs}\cdot\sum_{k=1}^n\alpha_k\psi(\tau(P_k))=c_{abs}\|A\|_{\Lambda_{\psi}(\mathcal{M})}.
\end{equation}

{\bf Step 4.} As in the proof of Proposition \ref{Separab}, let $\mathcal{F}^{r}(\mathcal{M},\tau)$ be the set of all $\tau$-finite range operators in $\mathcal{M}.$ The spectral theorem guaranties that the set of positive operators
having the form \eqref{simple form} is dense in $\mathcal{F}^{r}(\mathcal{M},\tau)$ (see the proof of \cite[Lemma IV. 6.8, p. 65]{DdePS}). Since $\Lambda_{\psi}(\mathcal{M})$ is a separable Banach space, it follows from \cite[Lemma IV.8.5, p. 87]{DdePS} (see also \cite[Theorem 55]{DdeP}) that $\mathcal{F}^{r}(\mathcal{M},\tau)$ is dense in $\Lambda_{\psi}(\mathcal{M}).$ So, it follows that the set of positive
operators having the form \eqref{simple form} is dense in $\Lambda_{\psi}(\mathcal{M}).$
Let $A\in\Lambda_{\psi}(\mathcal{M})$ and let $\{A_n\}_{n=1}^{\infty}$ be a sequence of positive operators of the form given by \eqref{simple form} and approximating $A$ in the norm $\|\cdot\|_{\Lambda_{\psi}(\mathcal{M})}.$
Then by \eqref{step-3}, we have $\|\mathcal{T}(A_n-A_m)\|_{(\mathcal{L}_1+\mathcal{L}_{\infty})(\mathcal{M})}\leq c_{abs}\|A_n-A_m\|_{\Lambda_{\psi}(\mathcal{M})},$ and so the sequence $\{\mathcal{T}(A_n)\}_{n\geq 1}$ is Cauchy in
$(\mathcal{L}_1+\mathcal{L}_{\infty})(\mathcal{M}).$ Since the space $(\mathcal{L}_1+\mathcal{L}_{\infty})(\mathcal{M})$ is complete (see \cite[Chapter III, p. 98]{DdePS} for more details), it follows that $\mathcal{T}(A_n)$ converges to an element of
$(\mathcal{L}_1+\mathcal{L}_{\infty})(\mathcal{M}),$ and we denote the limit by $\mathcal{T}(A).$  Thus, again using \eqref{step-3}, we obtain
\begin{eqnarray*}\begin{split}\|\mathcal{T}(A)\|_{(\mathcal{L}_1+\mathcal{L}_{\infty})(\mathcal{M})}&=\lim_{n\rightarrow \infty}\|\mathcal{T}(A_n)\|_{(\mathcal{L}_1+\mathcal{L}_{\infty})(\mathcal{M})}\\
&\leq c_{abs}\lim_{n\rightarrow \infty}\|A_n\|_{\Lambda_{\psi}(\mathcal{M})}=c_{abs}\|A\|_{\Lambda_{\psi}(\mathcal{M})}, \,\,\ A\in\Lambda_{\psi}(\mathcal{M}).
\end{split}\end{eqnarray*}
Therefore, the proof is complete.
\end{proof}

We are now fully equipped to prove the first part of our main result.
\begin{proof}[Proof of Theorem \ref{first main theorem} (i)] By Lemma \ref{main lorentz lemma}, we have
$$\|\mathcal{T}(A)\|_{(\mathcal{L}_1+\mathcal{L}_{\infty})(\mathcal{M})}\leq c_{abs}\|A\|_{\Lambda_{\psi}(\mathcal{M})}, \,\, A\in\Lambda_{\psi}(\mathcal{M}).$$
By \eqref{L+L} and \eqref{Lphi}, we obtain
%\begin{equation}\label{s equal}
$$\|A\|_{\Lambda_{\psi}(\mathcal{M})}\approx\|S\mu(A)\|_{(\mathcal{L}_1+\mathcal{L}_{\infty})(\mathcal{M})}, \,\, A\in\Lambda_{\psi}(\mathcal{M}).$$
%\end{equation}
Thus, again using \eqref{L+L}, we infer from the preceding estimate

\begin{equation}\label{*}
\int_0^1\mu(s,\mathcal{T}(A))ds\leq c_{abs}\cdot\int_0^1(S\mu(A))(s)ds, \,\, A\in\Lambda_{\psi}(\mathcal{M}).
\end{equation}
Now, we scale the trace $\tau\rightarrow t^{-1}\tau.$ We have $\mu_{t^{-1}\tau}(s,X)=\mu_{\tau}(st,X).$ Note that,
$$\|\mathcal{T}(A)\|_{\mathcal{L}_{p}(\mathcal{M},\frac{\tau}{t})}\lesssim \frac{1}{p-1}\|A\|_{\mathcal{L}_{p}(\mathcal{M},\frac{\tau}{t})}, 1<p\leq2.$$
Hence, applying \eqref{*} to $(\mathcal{M},\frac{\tau}{t}),$ we obtain
$$\int_0^1\mu_{\tau}(st,\mathcal{T}(A))ds\leq c_{abs}\cdot\int_0^1(S\mu_{\tau}(A))(st)ds, \,\, A\in\Lambda_{\psi}(\mathcal{M}).$$
Therefore, we have
\begin{eqnarray*}\begin{split}
\frac1t\int_0^t\mu(s,\mathcal{T}(A))ds
&=\int_0^1\mu(st,\mathcal{T}(A))ds\\
&\leq c_{abs}\cdot\int_0^1(S\mu(A))(st)ds=c_{abs}\cdot\frac1t\int_0^t(S\mu(A))(s)ds.
\end{split}\end{eqnarray*}
Since $t>0$ is arbitrary, the assertion follows.
\end{proof}

To prove Theorem \ref{first main theorem} (ii), we need the following lemma.
\begin{lem}\label{S by S} Let $A\in \Lambda_{\log}(\mathcal{M}).$ We have,
$$(S\mu(A))(t)\leq4(S\mu(A))(2t), \,\ t>0.$$
\end{lem}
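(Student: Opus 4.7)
The plan is to exploit that $x:=\mu(A)$ is a nonnegative decreasing function on $(0,\infty)$ and then compare the two summands defining $(Sx)(t)$ with $(Sx)(2t)$ piece by piece. In particular no trick is required: the whole estimate follows from the monotonicity of $x$ and the monotonicity of the weight $1/s$.

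Concretely, I would first split
\begin{equation*}
(Sx)(t)=\frac{1}{t}\int_{0}^{t}x(s)\,ds+\int_{t}^{2t}\frac{x(s)}{s}\,ds+\int_{2t}^{\infty}\frac{x(s)}{s}\,ds.
\end{equation*}
For the first summand, since $x\geq 0$ we have $\int_{0}^{t}x(s)\,ds\leq\int_{0}^{2t}x(s)\,ds$, so
\begin{equation*}
\frac{1}{t}\int_{0}^{t}x(s)\,ds\leq 2\cdot\frac{1}{2t}\int_{0}^{2t}x(s)\,ds.
\end{equation*}
For the middle summand, since $1/s\leq 1/t$ on $[t,2t]$,
\begin{equation*}
\int_{t}^{2t}\frac{x(s)}{s}\,ds\leq\frac{1}{t}\int_{t}^{2t}x(s)\,ds\leq\frac{1}{t}\int_{0}^{2t}x(s)\,ds=2\cdot\frac{1}{2t}\int_{0}^{2t}x(s)\,ds.
\end{equation*}
Adding the three pieces yields
\begin{equation*}
(Sx)(t)\leq 4\cdot\frac{1}{2t}\int_{0}^{2t}x(s)\,ds+\int_{2t}^{\infty}\frac{x(s)}{s}\,ds\leq 4(Sx)(2t),
\end{equation*}
which is the claim.

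There is no real obstacle here; the lemma is an elementary weight manipulation. As a sanity check I note that the compact identity $(Sx)(t)=\int_{0}^{\infty}x(s)\max(s,t)^{-1}\,ds$ combined with the trivial inequality $\max(s,2t)\leq 2\max(s,t)$ (considered separately for $s<t$, $t\leq s<2t$, $s\geq 2t$) even gives the sharper bound $(Sx)(t)\leq 2(Sx)(2t)$; but the constant $4$ asserted in the statement is all that is needed for the subsequent proof of Theorem \ref{first main theorem}(ii).
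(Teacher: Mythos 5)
Your proof is correct and follows essentially the same route as the paper: split off $\int_t^{2t}x(s)\,\frac{ds}{s}$, bound it using $1/s\le 1/t$ there, and absorb everything into $\frac{1}{2t}\int_0^{2t}x$ plus the tail, yielding the constant $4$ (the paper bounds $\int_t^{2t}\mu\le\int_0^t\mu$ via monotonicity, while you use the even simpler $\int_t^{2t}x\le\int_0^{2t}x$, an immaterial difference). Your closing observation via the kernel identity $(Sx)(t)=\int_0^\infty x(s)\max(s,t)^{-1}ds$ and $\max(s,2t)\le 2\max(s,t)$ is also valid and gives the sharper constant $2$, though the constant is irrelevant for the application.
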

\begin{proof} If $A\in \Lambda_{\log}(\mathcal{M}),$ then
\begin{eqnarray*}\begin{split}
&(S\mu(A))(t)\stackrel{\eqref{S}}{=}\frac{1}{t}\int_{0}^{t}\mu(s,A)ds+\int_t^{\infty}\mu(s,A)\frac{ds}{s}\\
&\leq \frac{1}{t}\int_{0}^{t}\mu(s,A)ds+\int_{2t}^{\infty}\mu(s,A)\frac{ds}{s}+\int_{t}^{2t}\mu(s,A)\frac{ds}{s}\\
&\leq \frac{2}{t}\int_{0}^{t}\mu(s,A)ds+\int_{2t}^{\infty}\mu(s,A)\frac{ds}{s}\\
&\leq 4\left(\frac{1}{2t}\int_{0}^{2t}\mu(s,A)ds+\int_{2t}^{\infty}\mu(s,A)\frac{ds}{s}\right)=4(S\mu(A))(2t), \,\ t>0.
\end{split}\end{eqnarray*}
\end{proof}

We are now ready to prove second part of the Theorem \ref{first main theorem}.
\begin{proof}[Proof of Theorem \ref{first main theorem} (ii)] By complex interpolation (see for instance \cite[Theorem 4.8]{Di}),
%\cite[Theorem 4.1]{DDdeP}),
%(see \cite[Corollary 1.6.12, p. 39]{Xu}),
we have
$$\|\mathcal{T}\|_{\mathcal{L}_p(\mathcal{M})\to \mathcal{L}_p(\mathcal{M})}\lesssim\frac1{p-1},\quad 1<p\leq 2.$$
Thus, $\mathcal{T}$ satisfies the assumptions (i) in Theorem \ref{first main theorem}.

First we prove the assertion for positive elements from $\Lambda_{\log}(\mathcal{M}).$ Let $A\in \Lambda_{\log}(\mathcal{M})$ be a positive operator.
Fix $t>0$ and set
$$A_{1}=(A-\mu(t,A))_{+}, \,\, A_{2}=\min\{A,\mu(t,A)\}.$$
Then, by Lemma 2.5 (iv) in \cite{FK}, we have
\begin{equation}\label{BC}
 \mu(A_1)=(\mu(A)-\mu(t,A))_+,  \,\,\ \mu(A_2)=\min\{\mu(A),\mu(t,A)\}.
\end{equation}
By \eqref{triangle svf}, we obtain
$$\mu(2t,\mathcal{T}(A))\leq\mu(t,\mathcal{T}(A_1))+\mu(t,\mathcal{T}(A_2)).$$
Then, it follows from \eqref{weakL1} and assumption \eqref{weak est} that
$$t\mu(t,\mathcal{T}(A_1))\leq\|\mathcal{T}(A_1)\|_{\mathcal{L}_{1,\infty}(\mathcal{M})}\lesssim\|A_1\|_{\mathcal{L}_{1}(\mathcal{M})}=\int_0^t(\mu(s,A)-\mu(t,A))ds.$$
Hence, dividing by $t,$ we have
\begin{equation}\label{T(B)}
\mu(t,\mathcal{T}(A_1))\lesssim\frac{1}{t}\int_0^t\left(\mu(s,A)-\mu(t,A)\right)ds\lesssim(S\mu(A))(t).
\end{equation}
Since $\mu$ is decreasing function (see \cite[Chapter II, p. 59]{KPS}), it follows that
$$t\mu(t,\mathcal{T}(A_2))\leq\int_0^t\mu(s,\mathcal{T}(A_2))ds.$$
Applying Theorem \ref{first main theorem} (i) to the last inequality, we infer

\begin{equation} \label{T(C)}
t\mu(t,\mathcal{T}(A_2))\leq c_{abs}\cdot\int_0^t(S\mu(A_2))(s)ds.
\end{equation}
We now compute
\begin{eqnarray*}\begin{split}\int_0^t(S\mu(A_2))(s)ds
&\stackrel{\eqref{S}}{=}\int_0^t\Big(\frac1s\int_0^s\mu(u,A_2)du+\int_s^{\infty}\mu(u,A_2)\frac{du}{u}\Big)ds\\
&=\int_0^t\Big(\frac1s\int_0^s\mu(u,A_2)du+\int_s^t\mu(u,A_2)\frac{du}{u}+\int_t^{\infty}\mu(u,A_2)\frac{du}{u}\Big)ds.
\end{split}\end{eqnarray*}
It is clear from \eqref{BC} that
%\begin{equation}\label{mu}
$$\mu(u,A_2)=\left\{
    \begin{array}{ll}
      \mu(t,A), & \,\,\ 0<u\leq t\hbox{,} \\
      \mu(u,A), & \,\,\ t\leq u<\infty \hbox{.}
    \end{array}
  \right.$$
%\end{equation}
Therefore,
\begin{eqnarray*}\begin{split}\int_0^t(S\mu(A_2))(s)ds
&=\int_0^t\Big(\mu(t,A)+\mu(t,A)\cdot\int_s^t\frac{du}{u}+\int_t^{\infty}\mu(u,A)\frac{du}{u}\Big)ds\\
&=t\mu(t,A)+\mu(t,A)\cdot\int_0^t\log\left(\frac{t}{s}\right)ds+t\int_t^{\infty}\mu(u,A)\frac{du}{u}\\
&=2t\mu(t,A)+t\int_t^{\infty}\mu(u,A)\frac{du}{u}\leq2t(S\mu(A))(t), \,\, t>0.
\end{split}\end{eqnarray*}
Thus, by \eqref{T(C)}

$$\mu(t,\mathcal{T}(A_2))\leq c_{abs}(S\mu(A))(t), \,\, t>0.$$
Combining \eqref{T(B)} and the preceding estimate, we obtain
\begin{equation}\label{A2 est}
\mu(2t,\mathcal{T}(A))\leq c_{abs}(S\mu(A))(t), \,\, t>0.
\end{equation}
Using \eqref{A2 est} and Lemma \ref{S by S}, we obtain
$$\mu(2t,\mathcal{T}(A))\leq c_{abs}(S\mu(A))(2t), \,\ t>0.$$
Since $t>0$ is arbitrary, the assertion follows.

Let us prove the assertion for the general case.
Note that, every operator $A$ in $S(\mathcal{M},\tau)$ can be decomposed into self-adjoint components $A=\Re(A)+i\Im(A)$  and
$\Re(A)=\frac{1}{2}(A+A^{*})$ and $\Im(A)=\frac{1}{2i}(A-A^{*})$ (see \cite[Chapter II., pp. 14-15]{DdePS}). Every self-adjoint operator $A=A^{*}$ decomposes into positive components
$$A=A_{+}-A_{-},$$
where $A_{+}=\frac{1}{2}(A+|A|)$  and $A_{-}=\frac{1}{2}(A-|A|), \,\ (|A|^{2}:=A^{*}A).$
Thus, any operator $A$ in $S(\mathcal{M},\tau)$ is represented as a linear combination of four positive operators, i.e. $A=A_1-A_2+iA_3-iA_4$ (see also \cite[Chapter I., p. 27]{LSZ}).
Note that $\mu(A_{k})\leq \sigma_{2}\mu(A),$ $k=1,2,3,4.$ So, using linearity of the operator $\mathcal{T},$ applying equation \eqref{triangle svf},
 we have
\begin{eqnarray*}\begin{split}\mu(\mathcal{T}(A))(t)
&\leq\mu\Big(\mathcal{T}(A_{1})-\mathcal{T}(A_{2})\Big)\Big(\frac{t}{2}\Big)+\mu\Big(\mathcal{T}(A_{3})-\mathcal{T}(A_{4})\Big)\Big(\frac{t}{2}\Big)\\
&\leq\sum_{k=1}^{4}\mu\Big(\mathcal{T}(A_{k})\Big)\Big(\frac{t}{4}\Big)\leq \sum_{k=1}^{4}\Big(S\mu(A_{k})\Big)\Big(\frac{t}{4}\Big)\\
&\leq c_{abs}\Big(S\mu(A)\Big)\Big(\frac{t}{8}\Big)\leq c_{abs}(S\mu(A))(t), \,\ t>0.
\end{split}\end{eqnarray*}
Since it is hold for any $t>0$, this concludes the proof.
\end{proof}

\section{Lower estimate for the triangular truncation operator $T$}

Let $S^{d}$ be the discrete version of the operator $S$ defined in \eqref{S dis}.
We will denote by $\mathbb{T}$ the circle, i.e. $\mathbb{T}=\{e^{i\theta}: \theta\in \mathbb{R}\}.$
There is an obvious identification between functions on $\mathbb{T}$ and $2\pi$-periodic functions on $\mathbb{R}$  (see \cite[Chapter I]{K}).
We identify $L_2(-\pi,\pi)$ with $L_2(\mathbb{T}),$ where $L_2(\mathbb{T})$ is the Lebesgue space of (equivalence classes) measurable functions such that
$$\|f\|_{L_{2}(\mathbb{T})}:=\left(\frac{1}{2\pi}\int_{\mathbb{T}}|f(t)|^{2}dt\right)^{1/2}$$
is finite. For more details on Fourier analysis on $\mathbb{T}$, we refer to \cite[Chapter I]{K}.

Let $T$ be the operator defined by \eqref{T-oper} and we will denote by $T_{[-\pi,\pi]}$ the operator defined by
\begin{equation}\label{T-oper on the interval}(T_{[-\pi,\pi]}(V)x)(t)=\int_{-\pi}^{\pi}K(t,s){\rm sgn}(t-s)x(s)ds,\quad x\in L_2(-\pi,\pi).
\end{equation}
Here,
$$(Vx)(t)=\int_{-\pi}^{\pi}K(t,s)x(s)ds,\quad x\in L_2(-\pi,\pi).$$
Let $U: L_{2}(\mathbb{R})\rightarrow L_{2}(-\pi,\pi)$ be a unitary operator is defined by
\begin{equation}\label{unitary op}
(Ux)(t):=\frac{1}{\sqrt{2}}x\Big(\tan\Big(\frac{t}{2}\Big)\Big)\cdot\frac{1}{\cos\Big(\frac{t}{2}\Big)}, \quad t\in(-\pi,\pi).
\end{equation}
If we define $R$ by the formulae
$$R(V)=UVU^{-1},$$ then
\begin{equation}\label{rest of T oper}T_{[-\pi,\pi]}\circ R=R\circ T.
\end{equation}

We denote by $\mathcal{D}$ the differential operator $\mathcal{D}:=\frac{1}{i}\frac{d}{dt}$ defined
on the set of all absolutely continuous functions $f$ on $[-\pi,\pi]$ such that $f'\in L_{2}(-\pi,\pi)$ and $f(-\pi)=f(\pi).$
For more details on this differential operator we refer the reader to \cite[Chapter VIII, pp. 275-285 ]{RS}.

Define the operator (we will denote it by $\mathcal{H}_{d}$) as follows
\begin{equation}\label{H dis}(\mathcal{H}_{d}a)(n):=\frac{2}{\pi i}\sum_{\substack{k\in\mathbb{Z}\\ k=n+1mod2}}\frac{a(k)}{k-n}, \,\,\ a\in \ell_{\infty}(\mathbb{Z}).
\end{equation}
The following theorem gives a non-commutative analogue of the Proposition 4.10 in \cite[Chapter III., p. 140]{BSh}.
\begin{thm} \label{T theorem}For every $a\in \ell_{\infty}(\mathbb{Z}),$ there exists an operator $V$ on $L_{2}(-\pi,\pi)$ with $\mu(V)=\mu(a)$ such that
$$S^{d}\mu(V)\leq c_{abs}\mu(T(V)).$$
\end{thm}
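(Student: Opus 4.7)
The plan is to construct $V$ as a direct sum of rank-one operators supported in disjoint intervals, so that triangular truncation decomposes blockwise into scaled copies of the classical sign-kernel operator whose singular values are known explicitly. The desired lower bound then reduces to a counting argument in the merged multi-set of block singular values.

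Concretely, let $\mu_k := \mu(k,a)$ and pick pairwise disjoint bounded intervals $I_k \subset (-\pi,\pi)$ together with the normalised characteristic functions $\phi_k := |I_k|^{-1/2}\chi_{I_k}$, an orthonormal system in $L_2(-\pi,\pi)$. Set
$$V := \sum_{k\geq 0}\mu_k\,(\phi_k\otimes\phi_k^*).$$
Because the $\phi_k$ are orthonormal and $\mu_k$ is non-increasing, $V$ is positive, self-adjoint and bounded, with $\mu(V)=(\mu_k)_{k\geq 0}=\mu(a)$.

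Since the kernels of the rank-one summands have pairwise disjoint supports in $I_k\times I_k$, the operator $T(V)$ splits as an orthogonal direct sum $\bigoplus_k V_k$, where
$$(V_kf)(t)=\frac{\mu_k}{|I_k|}\int_{I_k}\mathrm{sgn}(t-s)f(s)\,ds\qquad (f\in L_2(I_k))$$
is a scaled copy of the classical sign-kernel operator. Solving its eigenvalue equation (a first-order ODE which, combined with the integral constraint, yields an anti-periodic boundary condition $f(0)=-f(|I_k|)$) gives $\mu(j,V_k)=2\mu_k/(\pi(2j+1))\asymp\mu_k/(j+1)$, uniformly in $k$ and in $|I_k|$.

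The non-zero singular values of $T(V)$ are therefore the decreasing rearrangement of the multi-set $\{2\mu_k/(\pi(2j+1)):k,j\geq 0\}$. For any $\lambda>0$, an elementary bound on the counting function $N(\lambda):=\#\{\mu(\cdot,T(V))>\lambda\}$ gives
$$N(\lambda)\gtrsim\frac{1}{\lambda}\sum_{k:\,\mu_k\geq 2\lambda}\mu_k.$$
To match $S^d\mu(a)(n)$, I would fix $n$ and set $\lambda:=c_0S^d\mu(a)(n)$ for an absolute constant $c_0>0$, split
$$S^d\mu(a)(n)=\frac{1}{n+1}\sum_{k\leq n}\mu_k+\sum_{k>n}\frac{\mu_k}{k},$$
and argue by sub-cases. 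When the Ces\`aro average dominates, one has $\mu_n\geq 2\lambda$ and $\sum_{k\leq n}\mu_k\gtrsim n\lambda$, yielding $N(\lambda)\gtrsim n$ directly. When the tail dominates, the elementary inequality $n\sum_{k>n}\mu_k/k\leq\sum_{k>n}\mu_k$ combined with a layer-cake argument in the level sets of $\mu$ shows that $\sum_{k:\,\mu_k\geq 2\lambda}\mu_k\gtrsim n\lambda$ in this regime as well, so again $N(\lambda)\gtrsim n$.

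From $N(\lambda)\geq n$ one reads off $\mu(n,T(V))\geq\lambda=c_0S^d\mu(V)(n)$, which is the required inequality with $c_{abs}=1/c_0$. The main obstacle is the tail-dominated sub-case of Step six: one must carefully book-keep the contributions of indices $k>n$ with $\mu_k$ above the threshold $2\lambda$, leveraging the monotonicity of $\mu$, in order to ensure that the absolute constant $c_0$ can be chosen independently of the sequence $a$; the layer-cake rearrangement is the key device here.
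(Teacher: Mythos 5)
There is a genuine gap, and it is fatal to the approach rather than a fixable bookkeeping issue. For a block-diagonal $V=\bigoplus_k \mu_k\,\phi_k\otimes\phi_k^*$ the truncation $T(V)$ is again block-diagonal, each block being a scaled sign-kernel operator of norm $\tfrac{2}{\pi}\mu_k$; hence $\mu(0,T(V))=\|T(V)\|_{B(L_2)}=\tfrac{2}{\pi}\mu_0$. But the statement you must prove requires $\mu(0,T(V))\geq c^{-1}_{abs}\,S^{d}\mu(a)(0)=c^{-1}_{abs}\bigl(\mu_0+\sum_{k\geq1}\mu_k/k\bigr)$, and the tail series can be arbitrarily large compared with $\mu_0$: take $\mu_k=1$ for $k\leq M$ and $\mu_k=0$ for $k>M$, so that $S^{d}\mu(a)(n)\asymp 1+\log(M/n)$ while your $T(V)$ has singular values contained in $[0,2/\pi]$ with $\mu(n,T(V))\asymp\min(1,M/n)$. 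Thus the inequality fails by a factor $\log(M/n)$, and no choice of $c_{abs}$, of the intervals $I_k$, or of the threshold constant $c_0$ can rescue it. Concretely, this is where your Step-six analysis breaks: in the tail-dominated regime the claimed bound $\sum_{k:\,\mu_k\geq2\lambda}\mu_k\gtrsim n\lambda$ is false (in the example above the set $\{k:\mu_k\geq2\lambda\}$ is empty once $M$ is large), and the Ces\`aro sub-case assertion ``$\mu_n\geq2\lambda$'' is also false in general (e.g. $\mu_k=0$ for $k\geq1$), although that sub-case could be repaired. The conceptual point you are missing is that a direct sum of localized blocks destroys all long-range off-diagonal structure, and it is precisely that structure which makes the triangular truncation produce the logarithmic tail $\sum_{k>n}\mu_k/k$ of the Calder\'on operator.

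The paper's construction is designed exactly to create these long-range interactions: one takes $V=c(\mathcal{D})$, a function of the differentiation operator on $[-\pi,\pi]$, i.e.\ a convolution (Toeplitz-type) operator, with the values $\mu(k,a)$ placed on one side of $\mathbb{Z}$ along even indices. Then $T_{[-\pi,\pi]}(c(\mathcal{D}))=(\mathcal{H}_{d}c)(\mathcal{D})$, where $\mathcal{H}_{d}$ is the discrete Hilbert transform of the symbol, and the one-sided placement makes $|(\mathcal{H}_{d}c)(n)|$ dominate the full Calder\'on sum $\tfrac1{n+1}\sum_{k\leq n}\mu_k+\sum_{k>n}\mu_k/k$, which is what your localized blocks can never reproduce. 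If you want to salvage a matrix-style argument, you would need a globally coupled model (e.g.\ a Toeplitz/Hankel-type matrix built from $\mu(a)$) rather than a direct sum.
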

The proof needs some preparation.

\begin{lem}\label{first lem for T} For any $a\in\ell_{\infty}(\mathbb{Z}),$ we have
$$T_{[-\pi,\pi]}(a(\mathcal{D}))=(\mathcal{H}_{d}a)(\mathcal{D}).$$
\end{lem}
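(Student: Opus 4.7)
The plan is to diagonalize $\mathcal D$ in the Fourier basis $\{e_n(t) = (2\pi)^{-1/2}e^{int}\}_{n\in\mathbb Z}$ of $L_2(-\pi,\pi)$, in which $\mathcal{D}e_n=ne_n$, and to match both sides of the desired identity matrix entry by matrix entry. Writing $a(\mathcal D)=\sum_n a(n)P_n$ with $P_n$ the rank-one projection onto $e_n$, $a(\mathcal D)$ has formal integral kernel $K_a(t,s)=\frac{1}{2\pi}\sum_n a(n)e^{in(t-s)}$, while $(\mathcal H_d a)(\mathcal D)$ is, by construction, diagonal in $\{e_n\}$ with entries $(\mathcal H_d a)(n)$. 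Since $T_{[-\pi,\pi]}$ is a bounded self-adjoint operator on $\mathcal L_2(L_2(-\pi,\pi))$ by Theorem~\ref{weak est for T} together with the unitary equivalence \eqref{rest of T oper}, a density argument reduces the proof to verifying the identity on the family $\{P_k:k\in\mathbb Z\}$ and then reassembling by linearity.

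For a fixed $k$, the definition \eqref{T-oper on the interval} tells us that $T_{[-\pi,\pi]}(P_k)$ has integral kernel $\frac{1}{2\pi}\operatorname{sgn}(t-s)e^{ik(t-s)}$. I would then compute the matrix entry $\langle T_{[-\pi,\pi]}(P_k)e_m,e_j\rangle$ by substituting $u=t-s$: swapping the order of integration over $(t,s)\in(-\pi,\pi)^2$ produces the Jacobian weight $(2\pi-|u|)$ on $u\in(-2\pi,2\pi)$ and reduces the matrix entry to a one-dimensional integral of the form $\int_{-2\pi}^{2\pi}(2\pi-|u|)\operatorname{sgn}(u)e^{i\ell u}\,du$, where $\ell$ is a linear combination of $k,m,j$. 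A single integration by parts evaluates this elementary integral to $4\pi i/\ell$ for integer $\ell\ne 0$ and $0$ for $\ell=0$, which produces exactly the factor $1/(k-m)$ present in the summands of \eqref{H dis}. Summing on $k$ weighted by $a(k)$ and collecting what survives then recovers the matrix of $(\mathcal H_d a)(\mathcal D)$ in $\{e_n\}$.

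The main obstacle is that $\operatorname{sgn}(t-s)$ is not $2\pi$-periodic on $(-\pi,\pi)^2$ (the difference $t-s$ ranges over the larger interval $(-2\pi,2\pi)$), so a priori $T_{[-\pi,\pi]}(a(\mathcal D))$ need not be a Fourier multiplier at all: one must check that the off-diagonal matrix entries in $\{e_n\}$ actually cancel and that the diagonal entries reconstruct the precise parity-restricted sum in \eqref{H dis}. A clean bookkeeping device is the decomposition
$$\operatorname{sgn}(t-s)=\widetilde{\operatorname{sgn}}(t-s)+2\operatorname{sgn}(t-s)\chi_{\{|t-s|>\pi\}},$$
where $\widetilde{\operatorname{sgn}}$ denotes the $2\pi$-periodic extension of $\operatorname{sgn}$ on $(-\pi,\pi)$. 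The first summand produces a genuine Fourier multiplier whose symbol is the convolution of $a$ with the Fourier coefficients of $\widetilde{\operatorname{sgn}}$ (namely $2/(\pi ik)$ for $k$ odd and $0$ for $k$ even, which is the origin of the parity constraint in \eqref{H dis}), while the boundary correction $2\operatorname{sgn}(t-s)\chi_{\{|t-s|>\pi\}}$ supplies precisely the terms needed to pin down the overall constant and to force the residual off-diagonal entries to vanish.
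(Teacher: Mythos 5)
Your core computation is the same as the paper's. The paper's proof also represents $a(\mathcal D)$ as the convolution operator with kernel $f(t-s)$, $f(t)=\frac1{2\pi}\sum_n a(n)e^{int}$, multiplies the kernel by ${\rm sgn}(t-s)$, expands $f\cdot{\rm sgn}$ in a Fourier series on $(-\pi,\pi)$ with coefficient sequence $b$, asserts $T_{[-\pi,\pi]}(a(\mathcal D))=b(\mathcal D)$, and then evaluates $\int_{-\pi}^{\pi}e^{i(k-n)t}{\rm sgn}(t)\,dt=2i\int_0^{\pi}\sin((k-n)t)\,dt$, which vanishes for $k-n$ even and gives the factor $1/(k-n)$ for $k-n$ odd; that is exactly your "first summand" computation, i.e.\ the Fourier coefficients of the \emph{periodized} sign, and it is the source of the parity restriction in \eqref{H dis}. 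Up to that point you and the paper coincide.

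The genuine gap is in your final step. You correctly isolate the difficulty (the kernel uses ${\rm sgn}(t-s)$ on $(-2\pi,2\pi)$, not its $2\pi$-periodization), and your decomposition ${\rm sgn}=\widetilde{{\rm sgn}}+2\,{\rm sgn}\cdot\chi_{\{|t-s|>\pi\}}$ is correct, with the periodic part giving precisely the multiplier with symbol $\mathcal H_d a$ (up to normalizing constants). But the claim that the corner term "forces the residual off-diagonal entries to vanish" is asserted, not proved, and it does not hold: the operator with kernel $2f(t-s)\,{\rm sgn}(t-s)\chi_{\{|t-s|>\pi\}}$, supported on the two corner triangles, is not a Fourier multiplier, and nothing cancels its off-diagonal entries. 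Concretely, take $a=\delta_0$, so $V=a(\mathcal D)$ is the projection onto constants with kernel $\frac1{2\pi}$; then $\bigl(T_{[-\pi,\pi]}(V)\mathbf 1\bigr)(t)=\frac1{2\pi}\int_{-\pi}^{\pi}{\rm sgn}(t-s)\,ds=\frac t\pi$, which is not a multiple of $\mathbf 1$, so $\langle T_{[-\pi,\pi]}(V)e_0,e_1\rangle\neq0$ while $(\mathcal H_d\delta_0)(\mathcal D)$ is diagonal. Moreover, your own $u=t-s$ substitution with the triangular weight $2\pi-|u|$ gives diagonal entries proportional to $1/(n-k)$ for \emph{all} $k\neq n$, without the parity restriction of \eqref{H dis}, so the arithmetic of your plan would not reproduce $(\mathcal H_d a)(\mathcal D)$ either. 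Be aware that the paper's proof does not resolve this point: identifying the operator with kernel $f(t-s){\rm sgn}(t-s)$ with $b(\mathcal D)$ amounts to silently replacing ${\rm sgn}$ by its periodization, which is exactly the issue you spotted; your corner operator is the precise discrepancy, and it must either be controlled separately (what is ultimately needed downstream in Theorem \ref{T theorem} is only a singular-value lower bound) or the statement must be read with the periodized truncation — it cannot be made to disappear by a cancellation of matrix entries.
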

\begin{proof}
Take $a\in \ell_{\infty}(\mathbb{Z}),$ and consider an operator $V=a(\mathcal{D})$ (see Subsection 2.8) on $L_2(-\pi,\pi).$ It is easy to see by functional calculus that
\begin{equation}\label{ak}
a(\mathcal{D})e_k:=a(k)e_k, \,\,\  k\in\mathbb{Z},
\end{equation}
where $\{e_k(t)=e^{ikt}\}_{k=-\infty}^{+\infty}$ is complete
orthonormal system in $L_2(-\pi,\pi).$

It is well known that
$$x=\lim_{N\rightarrow \infty}\sum_{-N}^{N}\widehat{x}(n)e^{int}$$
in the $L_2(-\pi,\pi)$ norm (see \cite[Theorem I. 5.5, pp. 29-30]{K}), where $\widehat{x}(n)$ $(n\in\mathbb{Z}),$ is the n'th Fourier coefficient of the function $x$ defined by
\begin{equation}\label{xn}
\widehat{x}(n)=\frac{1}{2\pi}\int_{-\pi}^{\pi}x(t)e^{-int}dt, \,\,\  n\in\mathbb{Z}.
\end{equation}
It follows from \eqref{ak} and \eqref{xn} by an easy calculation that $$(Vx)(t)=\int_{-\pi}^{\pi}f(t-s)x(s)ds,$$
where
\begin{equation}\label{ft}
f(t)=\frac{1}{2\pi}\sum_{n\in\mathbb{Z}}a(n)e^{int}.
\end{equation}
Thus, \eqref{T-oper on the interval} implies
$$
(T_{[-\pi,\pi]}(V)x)(t)=\int_{-\pi}^{\pi}f(t-s){\rm sgn}(t-s)x(s)ds,$$
where sgn is the sign function, i.e.
$$\text{sgn}(t):=\left\{
            \begin{array}{ll}
              1, & t>0 \hbox{,} \\
              0, & t=0 \hbox{,} \\
              -1, & t<0 \hbox{.}
            \end{array}
          \right.
$$
If
\begin{equation}\label{T(V)}
 f(t)\cdot{\rm sgn}(t)=\frac{1}{2\pi}\sum_{n\in\mathbb{Z}}b(n)e^{int},
\end{equation}
then similar to \eqref{ak}, we have
\begin{equation}\label{bk}
T_{[-\pi,\pi]}(V)=b(\mathcal{D}).
\end{equation}
Let us now identify $b.$
Fix $n\in \mathbb{Z},$ multiplying both sides of \eqref{T(V)} by the function $e^{-int},$ and integrating over the interval $[-\pi,\pi),$ we obtain
$$b(n)=\frac1{2\pi}\int_{-\pi}^{\pi}f(t)\cdot{\rm sgn}(t)e^{-int}dt.$$
Thus, by \eqref{ft}, we have
\begin{equation}\label{bn-sum}
b(n)=\frac1{2\pi}\sum_{k\in\mathbb{Z}}a(k)\cdot\int_{-\pi}^{\pi}e^{i(k-n)t}\cdot{\rm sgn}(t)dt, \,\  n\in\mathbb{Z}.
\end{equation}
Clearly,
$$\int_{-\pi}^{\pi}e^{i(k-n)t}\cdot{\rm sgn}(t)dt=2i\int_0^{\pi}\sin((k-n)t)dt.$$
If $k=n,$ then
$$\int_0^{\pi}\sin((k-n)t)dt=0.$$
If $k>n,$ then
$$\int_0^{\pi}\sin((k-n)t)dt=\frac1{k-n}\int_0^{(k-n)\pi}\sin(t)dt=\frac2{k-n}\delta_{k-n+1{\rm mod}2},$$
where $\delta_{k-n+1{\rm mod}2}=\frac{(-1)^{k-n+1}+1}{2}, \,\ k,n\in\mathbb{Z}.$
If $k<n,$ then also
$$\int_0^{\pi}\sin((k-n)t)dt=\frac2{k-n}\delta_{k-n+1{\rm mod}2}.$$
Therefore, combining above three cases, from \eqref{bn-sum}, we obtain
\begin{equation}\label{bn1}
b(n)=\frac2{\pi i}\sum_{\substack{k\in\mathbb{Z}\\ k=n+1mod2}}\frac{a(k)}{n-k}, \,\,  n\in\mathbb{Z}.
\end{equation}
Therefore, by \eqref{H dis} and \eqref{bk}, we obtain the desired result.
\end{proof}

\begin{lem}\label{second lem for T} For each $a\in\ell_{\infty}(\mathbb{Z})$  there exists a sequence $c$ such that $\mu(a)=\mu(c)$ and
$$S^{d}\mu(a))\leq c_{abs}\mu(\mathcal{H}_{d}c).$$
\end{lem}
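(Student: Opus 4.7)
The plan is to exhibit an explicit rearrangement $c$ of $\mu(a)$ placed on indices of one parity, then evaluate $\mathcal{H}_d c$ at indices of the opposite parity where \textbf{all} kernel terms $\frac{1}{k-n}$ carry the same sign, so nothing cancels. Concretely, I would set $c(2k)=\mu(k,a)$ for every $k\geq 0$ and $c(j)=0$ for all other $j\in\mathbb{Z}$. Since $|c|$ takes the values $\mu(0,a),\mu(1,a),\dots$ exactly once each (at positions $0,2,4,\dots$) and is zero elsewhere, its decreasing rearrangement is literally $\mu(a)$, so the requirement $\mu(c)=\mu(a)$ holds.

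Next I would evaluate $(\mathcal{H}_d c)(n)$ at $n=-2m-1$ for $m\geq 0$. Because $n$ is odd, the parity condition $k\equiv n+1\pmod{2}$ in \eqref{H dis} selects exactly the even indices $k=2j$, $j\geq 0$, which are precisely the positions where $c$ is nonzero. All differences $k-n=2j+2m+1$ are strictly positive, so the series defining $(\mathcal{H}_d c)(-2m-1)$ has no cancellation and
\[
|(\mathcal{H}_d c)(-2m-1)|=\frac{2}{\pi}\sum_{j=0}^{\infty}\frac{\mu(j,a)}{2j+2m+1}.
\]

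Now I would split the sum at $j=m$. For $j\leq m$ one has $2j+2m+1\leq 4(m+1)$, and for $j\geq m+1$ one has $2j+2m+1\leq 4j$; this gives
\[
|(\mathcal{H}_d c)(-2m-1)|\geq \frac{1}{2\pi}\left(\frac{1}{m+1}\sum_{j=0}^{m}\mu(j,a)+\sum_{j=m+1}^{\infty}\frac{\mu(j,a)}{j}\right)=\frac{1}{2\pi}(S^{d}\mu(a))(m),
\]
by the definition \eqref{S dis} of $S^d$. Since $\mu(a)$ is decreasing, $S^d\mu(a)$ is decreasing too, so for each fixed $m$ the $m+1$ distinct indices $-1,-3,\dots,-2m-1$ all satisfy $|(\mathcal{H}_d c)(-2j-1)|\geq \frac{1}{2\pi}(S^d\mu(a))(j)\geq \frac{1}{2\pi}(S^d\mu(a))(m)$. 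Hence at least $m+1$ values of $|\mathcal{H}_d c|$ exceed $\frac{1}{2\pi}(S^d\mu(a))(m)$, which forces $\mu(m,\mathcal{H}_d c)\geq\frac{1}{2\pi}(S^d\mu(a))(m)$. As $m$ is arbitrary, $S^d\mu(a)\leq 2\pi\cdot\mu(\mathcal{H}_d c)$.

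The only real obstacle is precisely the choice made in the first step: if one placed the mass of $\mu(a)$ anywhere that the kernel $\frac{1}{k-n}$ can alternate in sign over the support, the resulting series for $\mathcal{H}_d c$ could be made arbitrarily small through cancellations, and no pointwise lower bound by $S^d\mu(a)$ would survive. Exploiting the parity gap built into \eqref{H dis} (the condition $k\equiv n+1\pmod{2}$) so that support and evaluation indices live on opposite parities and on the same side of zero is what makes the construction work; once that is observed, the remaining estimates are the standard two-sided comparison between $(S^d\mu(a))(m)$ and $\sum_j \mu(j,a)/(j+m+1)$.
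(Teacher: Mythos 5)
Your proof is correct and follows essentially the same route as the paper: you place the decreasing rearrangement $\mu(a)$ on indices of a single parity and evaluate $\mathcal{H}_d c$ at indices of the opposite parity on the other side of zero, so that all kernel terms have one sign, and the split of the sum at $j=m$ reproduces $(S^{d}\mu(a))(m)$ up to the constant $\tfrac{1}{2\pi}$. The only (cosmetic) differences are that the paper supports $c$ on non-positive even integers and passes to the rearrangement via the doubling estimate $(S^{d}b)(n)\leq(S^{d}b)(n/2)\leq 2(S^{d}b)(n)$, whereas you evaluate at negative odd integers and finish with a direct counting argument, which gives the same conclusion with an explicit constant.
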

\begin{proof}
Let $a\in\ell_{\infty}(\mathbb{Z}),$ for any $k\in\mathbb{Z}$ define
\begin{equation}\label{ck}
c(k)=\left\{
       \begin{array}{ll}
         0, & \,\,\ k>0 \hbox{,} \\
         \mu(-\frac{k}{2},a), & \,\,\ k\leq0, \, k=0mod2 \hbox{,} \\
         0, \,\,\ & \,\,\ k\leq0, \, k=1mod2 \hbox{.}
       \end{array}
     \right.
\end{equation}
and define an operator $V=c(\mathcal{D})$ on $L_{2}(-\pi,\pi).$ Hence, by \eqref{H dis} and functional calculus, we have

\begin{equation}\label{bn}
(\mathcal{H}_{d}c)(n)=\frac2{\pi i}\sum_{\substack{k\in\mathbb{Z}\\ k=n+1mod2}}\frac{c(k)}{n-k}, \,\, n\in\mathbb{Z}
\end{equation} and
$$\mu(c)=\mu(a).$$
Let $n\geq0$ such that $n=0mod2,$ then by \eqref{ck} and \eqref{bn}, we have
\begin{eqnarray*}\begin{split}|(\mathcal{H}_{d}c)(n)|
&=\frac2{\pi }\sum_{k\geq0}\frac{\mu(k,a)}{n+2k}\geq\frac2{\pi }\sum_{k\geq0}\frac{\mu(k,a)}{2(n+1)+2k}\\
&\geq\frac1{2\pi }\sum_{k\geq0}\mu(k,a)\min\{\frac{1}{n+1},\frac{1}{k}\}
=\frac1{2\pi }S^{d}\mu(n,a).
\end{split}\end{eqnarray*}
Hence,
$$|(\mathcal{H}_{d}c)(n)|\geq \left\{
               \begin{array}{ll}
                \frac1{2\pi}S^{d}\mu(n,a), & n\geq0,\, n=0mod2 \hbox{,} \\
                 0, & \text{otherwise} \hbox{.}
               \end{array}
             \right.
$$
Taking decreasing rearrangement from the last inequality and using the fact
$(S^{d}a)(n)\leq (S^{d}a)(n/2)\leq 2\cdot(S^d a)(n)$ for any positive sequence $a=\{a(n)\}_{n\in \mathbb{Z}_{+}},$ we obtain
$$ S^{d}\mu(a)\leq c_{abs} \mu(\mathcal{H}_{d}c).$$
%Since $\mu(a(\mathcal{D}))=\mu(a)$ and $\mu(\mathcal{H}_{d}a)=\mu((\mathcal{H}_{d}a)(\mathcal{D})),$ it follows from the last inequality that
%$$S^{d}\mu(a(\mathcal{D}))=S^{d}\mu(a)\leq c_{abs}\mu(\mathcal{H}_{d}a)=c_{abs}\mu((\mathcal{H}_{d}a)(\mathcal{D})).$$
\end{proof}

\begin{proof}[Proof of Theorem \ref{T theorem}] By \eqref{rest of T oper}, we may consider $T_{[-\pi,\pi]}$ instead of $T.$ Let $a\in\ell_{\infty}(\mathbb{Z}).$ Then, by Lemma \ref{second lem for T} there is a sequence $c$ such that $\mu(a)=\mu(c)$ and
$$ S^{d}\mu(a)\leq c_{abs} \mu(\mathcal{H}_{d}c).$$
  Define an operator $V=c(\mathcal{D})$ on $L_{2}(-\pi,\pi).$  Since $\mu(c(\mathcal{D}))=\mu(c)$ and $\mu(\mathcal{H}_{d}c)=\mu((\mathcal{H}_{d}c)(\mathcal{D})),$ it follows from the preceding inequality that
  $$ S^{d}\mu(V)=S^{d}\mu(a)\leq c_{abs} \mu(\mathcal{H}_{d}c)=c_{abs} \mu((\mathcal{H}_{d}c)(\mathcal{D})).$$
Therefore, using Lemma \ref{first lem for T}, we conclude the proof.

\end{proof}

\section{optimal symmetric quasi-banach range for the operator $S$}
In this section, we describe the optimal symmetric quasi-Banach function range for the Calder\'{o}n operator $S$ defined in \eqref{S}. We need the following lemma.
\begin{lem}\label{measure conv}
Let $\{x_n\}_{n=1}^{\infty}\subset S(0,\infty).$
If the series
$$\sum_{n=1}^{\infty}\sigma_{2^{n}}\mu(x_n)$$
converges almost everywhere (a.e.) in $S(0,\infty),$ then the series
$\sum_{n=1}^{\infty}x_{n}$ converges in measure in $S(0,\infty)$ and we have
\begin{equation}\label{est conv}
\mu\Big(\sum_{n=1}^{\infty}x_n\Big)\leq\sum_{n=1}^{\infty}\sigma_{2^{n}}\mu(x_n).
\end{equation}
\end{lem}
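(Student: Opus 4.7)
The plan is to combine the iterated singular value triangle inequality \eqref{triangle svf} with completeness of convergence in measure. Set $S_N := \sum_{n=1}^{N} x_n$. Iterating \eqref{triangle svf} with weights $t_n := t/2^n$, whose partial sums $\sum_{n=1}^{N} t/2^n = t(1-2^{-N})$ stay below $t$, and using that $\mu$ is decreasing yields
$$
\mu(S_N)(t) \leq \mu(S_N)\Big(\sum_{n=1}^{N} t/2^n\Big) \leq \sum_{n=1}^{N} \mu(x_n)(t/2^n) = \sum_{n=1}^{N}(\sigma_{2^n}\mu(x_n))(t).
$$
The same device applied to $S_N - S_M = \sum_{n=M+1}^{N} x_n$ gives, for every $N>M$,
$$
\mu(S_N - S_M)(t) \leq \sum_{n=M+1}^{N}(\sigma_{2^n}\mu(x_n))(t).
$$

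Next, I would verify that $(S_N)$ is Cauchy in measure. Write $R_M(t) := \sum_{n=M+1}^{\infty}(\sigma_{2^n}\mu(x_n))(t)$ and let $E$ be the full-measure set on which $R_M(t) \to 0$. Given $\varepsilon, \delta > 0$, choose any $t_0 \in E \cap (0,\delta)$; since $R_M(t_0) \to 0$, for all sufficiently large $M$ and every $N > M$ one has $\mu(S_N - S_M)(t_0) \leq R_M(t_0) < \varepsilon$, and therefore $m(\{|S_N - S_M|>\varepsilon\}) \leq t_0 < \delta$. Thus $(S_N)$ is Cauchy in measure, so $S_N \to S$ in measure for some $S \in S(0,\infty)$.

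Finally, to obtain \eqref{est conv}, I would pass to the limit by a standard lower-semicontinuity argument. Put $\phi(t) := \sum_{n=1}^{\infty}(\sigma_{2^n}\mu(x_n))(t)$. Given $s > \phi(t)$, fix $s' \in (\phi(t), s)$; then $\mu(S_N)(t) \leq \phi(t) < s'$ yields $m(\{|S_N|>s'\}) \leq t$ for every $N$. Combining this with the inclusion
$$
\{|S|>s\} \subseteq \{|S_N|>s'\} \cup \{||S|-|S_N||>s-s'\}
$$
and the fact that $|S_N| \to |S|$ in measure gives $m(\{|S|>s\}) \leq t$; right-continuity of $s \mapsto m(\{|S|>s\})$ then forces $\mu(S)(t) \leq \phi(t)$, which is precisely \eqref{est conv}.

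The main subtlety is passing from the merely a.e.\ control of the tails $R_M$ to a genuine distribution function estimate on $S_N - S_M$; it is resolved by invoking the monotonicity of each $R_M$ in $t$, so that choosing a single small $t_0$ from the full-measure convergence set suffices to bound $n_{S_N - S_M}(\varepsilon)$.
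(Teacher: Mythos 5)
Your proof is correct and takes essentially the same route as the paper's: dyadic splitting of the time variable together with the subadditivity of the decreasing rearrangement (the paper cites the analogous inequality (2.23) in \cite{KPS} where you iterate \eqref{triangle svf}), tail control at a single point chosen from the a.e.-convergence set, Cauchyness and completeness in the measure topology of $S(0,\infty)$, and then passage to the limit in the rearrangement estimate. The only differences are cosmetic: you use the unnormalized weights $t/2^{n}$ plus monotonicity of $\mu$ where the paper normalizes the weights to sum exactly to $\varepsilon$, and your final distribution-function limiting argument spells out what the paper compresses into ``$\mu$ is continuous from the right.''
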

\begin{proof} Fix $\varepsilon,\delta>0,$ and choose $N=N(\varepsilon,\delta)$ such that
\begin{equation}\label{epsilon delta est}
\left(\sum_{n=N}^{\infty}\sigma_{2^{n}}\mu(x_n)\right)(\varepsilon)<\delta.
\end{equation}
Then, for any $N_1,N_2\geq N$ and by (2.23) in \cite[Corollary II.2, p. 67]{KPS} and \eqref{epsilon delta est}, we have
\begin{equation}\label{partial sum est}
\begin{split}\mu\left(\varepsilon,\sum_{n=N_{1}+1}^{N_2}x_{n}\right)
&=\mu\left(\varepsilon\cdot\frac{\sum_{n=N_{1}+1}^{N_2}2^{-n}}{\sum_{m=N_{1}+1}^{N_2}2^{-m}},\sum_{n=N_{1}+1}^{N_2}x_{n}\right)\\
&\stackrel{(2.23)}\leq\sum_{n=N_{1}+1}^{N_2}\mu\left(\varepsilon\cdot\frac{2^{-n}}{\sum_{m=N_{1}+1}^{N_2}2^{-m}},x_{n}\right)\\
&\leq\sum_{n=N_{1}+1}^{N_2}\mu(\varepsilon\cdot2^{N_{1}-n},x_{n})\\
&\leq\sum_{n=N}^{\infty}\mu(\varepsilon\cdot2^{-n},x_{n})=\left(\sum_{n=N}^{\infty}\sigma_{2^{n}}\mu(x_n)\right)(\varepsilon)\stackrel{\eqref{epsilon delta est}}<\delta.
\end{split}
\end{equation}
Let us denote $a_{N_1}=\sum_{n=1}^{N_1}x_{n}$ and  $a_{N_2}=\sum_{n=1}^{N_2}x_{n}.$
Then, by the preceding inequality for any $N_{1},N_{2}\geq N,$ we obtain
$$a_{N_2}-a_{N_{1}}\in U(\varepsilon,\delta):=\{x\in S(0,\infty): m(\{|x|>\delta\})<\varepsilon\},$$
which shows that $\{a_k\}_{k=1}^{\infty}$ is a Cauchy sequence in measure in $S(0,\infty).$
 Since $S(0,\infty)$ is complete in measure topology, it follows that
the series $\sum_{n=1}^{\infty}x_{n}$ converges in measure.
Therefore, since the decreasing rearrangement $\mu$ is continuous from the right, it follows from \eqref{partial sum est} that
$$\mu\left(\sum_{n=1}^{\infty}x_n\right)\leq\sum_{n=1}^{\infty}\sigma_{2^{n}}\mu(x_n).$$
\end{proof}

\begin{definition}\label{quasi-banach range} Let $E$ be a quasi-Banach symmetric space on $(0,\infty).$ Let $E(0,\infty)\subset \Lambda_{\log}(0,\infty)$ and let $S$ be the operator defined in \eqref{S} . Define
$$F(0,\infty):=\{x\in(L_{1,\infty}+L_{\infty})(0,\infty):  \exists y\in E(0,\infty), \, \mu(x)\leq S\mu(y)\}$$
such that
$$\|x\|_{F(0,\infty)}:=\inf\{\|y\|_{E(0,\infty)}:\mu(x)\leq S\mu(y)\}<\infty.$$
\end{definition}

The following result provides solution to Problem \ref{main problem} in the special case $\mathcal{M}=\mathbb{C}.$

\begin{thm}\label{quasi-banach opt range} Let $E$ be a quasi-Banach symmetric space on $(0,\infty).$ If $E(0,\infty)\subset\Lambda_{\log}(0,\infty),$  then
\begin{enumerate}[{\rm (i)}]
\item $(F(0,\infty),\|\cdot\|_{F(0,\infty)})$ is a quasi-Banach space.
\item Moreover, $(F(0,\infty),\|\cdot\|_{F(0,\infty)})$ is the optimal symmetric quasi-Banach range for the operator $S$ on $E(0,\infty).$
\end{enumerate}
\end{thm}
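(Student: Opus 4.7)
The plan is to handle (i) and (ii) in sequence. The structural facts I will lean on throughout are the identity $\sigma_a\circ S=S\circ\sigma_a$ on non-negative functions (immediate from a change of variable), the dilation estimate $(Sf)(t/2)\leq 4(Sf)(t)$ from Lemma \ref{S by S}, and the fact that on any symmetric quasi-Banach space on $(0,\infty)$ the dilation $\sigma_s$ has finite operator norm.

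For (i), absolute homogeneity and invariance under equimeasurable rearrangements of $\|\cdot\|_F$ are immediate from the definition. For the quasi-triangle inequality, given $x_1,x_2\in F$ with admissible representatives $y_1,y_2\in E$, the subadditivity \eqref{triangle svf} combined with monotonicity of $S$ yields $\mu(x_1+x_2)(t)\leq S\mu(y_1)(t/2)+S\mu(y_2)(t/2)$, and Lemma \ref{S by S} upgrades this to $\mu(x_1+x_2)\leq 4S(\mu(y_1)+\mu(y_2))$. Since $\mu(y_1)+\mu(y_2)$ is already decreasing, it equals $\mu(y)$ for any $y$ equimeasurable to it, and the quasi-triangle inequality of $E$ gives $\|x_1+x_2\|_F\leq 4K(\|y_1\|_E+\|y_2\|_E)$ with $K$ the quasi-norm constant of $E$; passing to infima yields the quasi-triangle inequality for $F$. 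That $F\hookrightarrow (L_{1,\infty}+L_\infty)(0,\infty)$ continuously follows from the hypothesis $E\subset\Lambda_{\log}$ combined with the boundedness of $S:\Lambda_{\log}\to (L_{1,\infty}+L_\infty)(0,\infty)$.

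Completeness of $F$ is the main technical step and the principal obstacle. Given a Cauchy sequence $\{x_n\}\subset F$, the continuous embedding into $(L_{1,\infty}+L_\infty)$ produces a candidate limit $x$ in measure, and one must promote this to convergence in $F$. Using the Aoki--Rolewicz theorem to replace the quasi-norm on $E$ by an equivalent $p$-norm, I would extract a subsequence $\{x_{n_k}\}$ with $\|x_{n_{k+1}}-x_{n_k}\|_F\leq \epsilon_k$, where $\epsilon_k$ decreases so rapidly that $\sum_{k\geq 1}\|\sigma_{2^{k}}\|_{E\to E}^{p}\,\epsilon_k^{p}<\infty$ (feasible because the dilation norms on $E$ are finite). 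Choosing $y_k\geq 0$ in $E$ with $\mu(x_{n_{k+1}}-x_{n_k})\leq S\mu(y_k)$ and $\|y_k\|_E\leq 2\epsilon_k$, the function $Y_j:=\sum_{k\geq j}\sigma_{2^{k-j+1}}\mu(y_k)$ is decreasing and lies in $E$ by $p$-summability. Applying Lemma \ref{measure conv} to the tail $\sum_{k\geq j}(x_{n_{k+1}}-x_{n_k})$ and using $\sigma_a\circ S=S\circ\sigma_a$ gives $\mu(x-x_{n_j})\leq S Y_j=S\mu(Y_j)$, so $x-x_{n_j}\in F$ with $\|x-x_{n_j}\|_F\leq \|Y_j\|_E\to 0$. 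A standard Cauchy argument transfers this convergence to the whole sequence, completing the proof that $F$ is quasi-Banach.

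For (ii), boundedness of $S:E\to F$ follows immediately from \eqref{S proper}: for $y\in E$ one has $\mu(Sy)\leq S\mu(y)=\mu(S\mu(y))$ (since $S\mu(y)$ is already decreasing), so $y$ itself witnesses $Sy\in F$ with $\|Sy\|_F\leq \|y\|_E$. For optimality, suppose $G$ is any symmetric quasi-Banach space with $\|Sz\|_G\leq C\|z\|_E$ for every $z\in E$. For $x\in F$ pick $y\in E$ with $\mu(x)\leq S\mu(y)$ and $\|y\|_E\leq (1+\varepsilon)\|x\|_F$; since $\mu(x)\leq \mu(S\mu(y))$, the symmetry of $G$ yields $\|x\|_G\leq \|S\mu(y)\|_G\leq C\|y\|_E\leq C(1+\varepsilon)\|x\|_F$, and letting $\varepsilon\to 0$ gives the continuous embedding $F\hookrightarrow G$ with constant $C$, establishing $F$ as the optimal symmetric quasi-Banach range.
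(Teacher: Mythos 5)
Your proof is correct and follows essentially the same route as the paper: the quasi-norm axioms are verified via the interplay of $S$ with dilations (you invoke Lemma \ref{S by S} where the paper uses the commutation $\sigma_2\circ S=S\circ\sigma_2$, an equivalent device), completeness is obtained from Aoki--Rolewicz plus a rapidly Cauchy subsequence and Lemma \ref{measure conv}, and optimality by feeding a near-optimal majorant $S\mu(y)$ into the symmetry of the competing space $G$. If anything, your completeness step is slightly more careful than the paper's, since you estimate the tails $\|x-x_{n_j}\|_{F(0,\infty)}\to 0$ explicitly rather than only exhibiting the limit as an element of $F(0,\infty)$.
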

First we need following Lemmas.
\begin{lem}\label{linearity of op range} Let $E$ be a symmetric space on $(0,\infty).$ If $E(0,\infty)\subset\Lambda_{\log}(0,\infty),$ then $F(0,\infty)$ is a linear space.
\end{lem}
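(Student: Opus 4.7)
\textbf{Proof plan for Lemma \ref{linearity of op range}.} The task is to verify closure of $F(0,\infty)$ under both scalar multiplication and addition. Scalar multiplication is immediate: if $\mu(x)\leq S\mu(y)$ with $y\in E(0,\infty)$ and $\lambda\in\mathbb{R}$, then $\mu(\lambda x)=|\lambda|\mu(x)\leq |\lambda|\,S\mu(y)=S(|\lambda|\mu(y))=S\mu(\lambda y)$ by linearity of $S$, and $\lambda y\in E(0,\infty)$; also $\lambda x\in(L_{1,\infty}+L_\infty)(0,\infty)$. So the real content is closure under addition, for which I plan to combine three ingredients already developed in the paper.

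The ingredients are: (a) the inequality $\mu(t,x_1+x_2)\leq\mu(t/2,x_1)+\mu(t/2,x_2)$ from \eqref{triangle svf}, which I will rewrite as $\mu(x_1+x_2)\leq\sigma_2\mu(x_1)+\sigma_2\mu(x_2)$; (b) the commutation relation $\sigma_2\circ S=S\circ\sigma_2$ on decreasing functions, which I will verify by a single change of variables $u=s/2$ in the defining formula \eqref{S}, giving $(S(\sigma_2 g))(t)=(Sg)(t/2)=\sigma_2(Sg)(t)$ for decreasing $g$; (c) boundedness of the dilation operator $\sigma_2$ on every symmetric quasi-Banach space $E$, a classical fact (cf.\ \cite[Chapter II]{KPS}) extending to the quasi-Banach setting.

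Given $x_1,x_2\in F(0,\infty)$ with witnesses $y_1,y_2\in E(0,\infty)$ such that $\mu(x_i)\leq S\mu(y_i)$, I will set $y:=\sigma_2(\mu(y_1)+\mu(y_2))$. Because $E$ is symmetric, $\mu(y_i)\in E$, hence $\mu(y_1)+\mu(y_2)\in E$ by linearity, and ingredient (c) then gives $y\in E$. Since $y$ is decreasing, $\mu(y)=y$. Assembling (a), (b), linearity of $S$, and the given estimates,
\[
\mu(x_1+x_2)\leq \sigma_2\mu(x_1)+\sigma_2\mu(x_2)\leq \sigma_2 S\mu(y_1)+\sigma_2 S\mu(y_2)=S(\sigma_2\mu(y_1)+\sigma_2\mu(y_2))=S\mu(y),
\]
so $x_1+x_2\in F(0,\infty)$. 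Membership of $x_1+x_2$ in $(L_{1,\infty}+L_\infty)(0,\infty)$ is automatic since this space is linear.

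The only nontrivial point, and hence the ``main obstacle'' in what is essentially a bookkeeping argument, is item (c): one must know that $\sigma_2$ maps a general symmetric quasi-Banach space into itself. In the Banach case this is standard with operator norm at most $2$; for quasi-Banach symmetric spaces one can appeal to the Aoki--Rolewicz theorem, or argue directly using the quasi-triangle constant together with the decreasing-rearrangement identity $\mu(\sigma_2 x)=\sigma_2\mu(x)$. Once (c) is granted, the commutation $S\circ\sigma_2=\sigma_2\circ S$ is the structural observation that makes the whole argument go through without any loss of the witness $y$ from $E$.
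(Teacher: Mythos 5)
Your proposal is correct and takes essentially the same route as the paper: the paper likewise passes from $\mu(\alpha_1x_1+\alpha_2x_2)\leq\sigma_2\mu(\alpha_1x_1)+\sigma_2\mu(\alpha_2x_2)$ to the witness $|\alpha_1|\sigma_2\mu(y_1)+|\alpha_2|\sigma_2\mu(y_2)\in E(0,\infty)$ using the commutation of $S$ with $\sigma_2$, linearity of $S$, and dilation invariance of $E$. The only cosmetic difference is that you treat scalar multiplication separately, while the paper absorbs the scalars $\alpha_1,\alpha_2$ into the single estimate.
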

\begin{proof} For $j=1,2,$ let $x_{j}\in F(0,\infty),$ and $\alpha_{j}$ be any scalars from the field of complex numbers. Then by the definition of $F(0,\infty),$ there exist corresponding $y_{j}\in E(0,\infty)$ such that
 $\mu(x_{j})\leq S\mu(y_{j}).$ Therefore, for any $x_{j}\in F(0,\infty)$ and $\alpha_{j},$  by \cite[Proposition II.1.7, p. 41]{BSh}, we have
\begin{equation}\begin{split}\label{linearity}\mu(\alpha_{1}x_{1}+\alpha_{2}x_{2})
&\leq\sigma_{2}\mu(\alpha_{1}x_{1})+\sigma_{2}\mu(\alpha_{2}x_2)=|\alpha_{1}|\cdot\sigma_{2}\mu(x_{1})+|\alpha_{2}|\cdot\sigma_{2}\mu(x_{2})\\
&\leq |\alpha_{1}|\cdot\sigma_{2}\left(S(\mu(y_{1}))\right)+|\alpha_{2}|\cdot\sigma_{2}\left(S(\mu(y_{2}))\right)\\
&= S(|\alpha_{1}|\cdot\sigma_{2}\mu(y_{1}))+S(|\alpha_{2}|\cdot\sigma_{2}\mu(y_{2}))\\
&=S(|\alpha_{1}|\cdot\sigma_{2}\mu(y_{1})+|\alpha_{2}|\cdot\sigma_{2}\mu(y_{2})).
\end{split}\end{equation}
Since $E(0,\infty)$ is a linear space and  $|\alpha_{1}|\cdot\sigma_{2}\mu(y_{1})+|\alpha_{2}|\cdot\sigma_{2}\mu(y_{2})\in E(0,\infty)$, it follows that $\alpha_{1}x_{1}+\alpha_{2}x_{2}\in F(0,\infty).$ This shows that
$F(0,\infty)$ is a linear space.
\end{proof}

\begin{lem}\label{quasi-norm of op range} Let $E$ be a quasi-Banach symmetric space on $(0,\infty).$ If $E(0,\infty)\subset\Lambda_{\log}(0,\infty),$ then $(F(0,\infty),\|\cdot\|_{F(0,\infty)})$ is a
quasi-normed space.
\end{lem}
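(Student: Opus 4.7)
The plan is to verify the three quasi-norm axioms for $\|\cdot\|_{F(0,\infty)}$ in Definition \ref{quasi-banach range}: absolute homogeneity, positive definiteness, and a quasi-triangle inequality with a finite constant. Homogeneity is immediate: using $\mu(\alpha x)=|\alpha|\mu(x)$ together with the positive homogeneity $S(|\alpha|\mu(y))=|\alpha|S\mu(y)$, the admissible sets $\{y\in E:\mu(\alpha x)\le S\mu(y)\}$ and $|\alpha|\cdot\{y\in E:\mu(x)\le S\mu(y)\}$ coincide, yielding $\|\alpha x\|_F=|\alpha|\|x\|_F$.

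For positive definiteness the interesting direction is that $\|x\|_F=0$ forces $x=0$. From the infimum I extract $\{y_n\}\subset E(0,\infty)$ with $\mu(x)\le S\mu(y_n)$ and $\|y_n\|_E\to 0$. The hypothesis $E\subset\Lambda_{\log}$ combined with the closed graph theorem (both are quasi-Banach symmetric spaces, hence continuously embedded in $S(0,\infty)$ with convergence in quasi-norm implying convergence in measure) gives a continuous embedding $E\hookrightarrow\Lambda_{\log}$, so $\|y_n\|_{\Lambda_{\log}}\to 0$. The bound $\|Sy\|_{(L_{1,\infty}+L_\infty)(0,\infty)}\lesssim\|y\|_{\Lambda_{\log}(0,\infty)}$, which is precisely what makes $\Lambda_{\log}$ the maximal domain of $S$ (compare Remark \ref{dom of Hilbert trans} and \eqref{S proper}), then yields $S\mu(y_n)\to 0$ in $(L_{1,\infty}+L_\infty)(0,\infty)$, in particular in measure. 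The pointwise domination $0\le\mu(x)\le S\mu(y_n)$ then forces $\mu(x)=0$, i.e.\ $x=0$.

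For the quasi-triangle inequality, the computation follows the template used in Lemma \ref{linearity of op range}. Given $x_1,x_2\in F$ and $\varepsilon>0$, pick $y_j\in E$ with $\mu(x_j)\le S\mu(y_j)$ and $\|y_j\|_E\le\|x_j\|_F+\varepsilon$. Combining the elementary rearrangement bound $\mu(x_1+x_2)\le\sigma_2\mu(x_1)+\sigma_2\mu(x_2)$ with the scale invariance $\sigma_2 S=S\sigma_2$ of the Calder\'on operator (a direct change of variables in \eqref{S}), one obtains
$$\mu(x_1+x_2)\le S\bigl(\sigma_2\mu(y_1)+\sigma_2\mu(y_2)\bigr).$$
Setting $y:=\sigma_2\mu(y_1)+\sigma_2\mu(y_2)\in E$ and invoking the quasi-norm constant $C_E$ of $E$ together with the standard boundedness of the dilation $\sigma_2$ on every symmetric quasi-Banach space (with operator norm $K_E$), we get $\|y\|_E\le C_E K_E(\|y_1\|_E+\|y_2\|_E)$. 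Taking the infimum over admissible $y_1,y_2$ and letting $\varepsilon\to 0$ produces
$$\|x_1+x_2\|_F\le C_E K_E\bigl(\|x_1\|_F+\|x_2\|_F\bigr).$$

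The only step requiring genuine care is the definiteness argument, since it is the unique place where the hypothesis $E\subset\Lambda_{\log}$ is used in an essential way, through both the continuity of the embedding and the $\Lambda_{\log}\to L_{1,\infty}+L_\infty$ estimate for $S$. The remaining axioms reduce to bookkeeping once the identity $\sigma_2 S=S\sigma_2$ and the boundedness of $\sigma_2$ on symmetric quasi-Banach spaces are in hand.
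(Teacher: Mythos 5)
Your proof is correct, and the homogeneity and quasi-triangle steps coincide with the paper's argument (rearrangement bound $\mu(x_1+x_2)\le\sigma_2\mu(x_1)+\sigma_2\mu(x_2)$, commutation of $S$ with dilations, boundedness of $\sigma_2$ on $E$, and the $\varepsilon$-choice of $y_1,y_2$). The genuine difference is in the definiteness step. The paper argues directly on $E$: since $S$ is a positive operator from the (quasi-)Banach lattice $E(0,\infty)$ into $(L_{1,\infty}+L_\infty)(0,\infty)$, it is automatically bounded (citing Meyer-Nieberg, Proposition 1.3.5), so $\|y_n\|_E\to0$ gives $\|S\mu(y_n)\|_{L_{1,\infty}+L_\infty}\to0$ and hence $\|\mu(x)\|_{L_{1,\infty}+L_\infty}=0$. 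You instead factor through the hypothesis $E\subset\Lambda_{\log}$: closed graph gives the continuous embedding $E\hookrightarrow\Lambda_{\log}$, and then you use boundedness of $S:\Lambda_{\log}\to L_{1,\infty}+L_\infty$. Both routes work; yours makes the role of the hypothesis $E\subset\Lambda_{\log}$ explicit and avoids invoking the positivity-implies-continuity principle in the quasi-Banach lattice setting (the cited Meyer-Nieberg statement is formulated for Banach lattices, so the paper's appeal to it implicitly needs the quasi-Banach analogue), at the cost of needing the lattice-of-functions facts that quasi-norm convergence in symmetric spaces implies convergence in measure. One small point: the estimate $\|S\mu(y)\|_{(L_{1,\infty}+L_\infty)(0,\infty)}\lesssim\|y\|_{\Lambda_{\log}(0,\infty)}$ is not literally contained in Remark \ref{dom of Hilbert trans} or \eqref{S proper}, so you should record its (two-line) proof: since $S\mu(y)$ is decreasing, $\|S\mu(y)\|_{L_{1,\infty}+L_\infty}\approx\sup_{0<t\le1}t\,(S\mu(y))(t)$, and for $t\le1$
\begin{equation*}
t\,(S\mu(y))(t)=\int_0^t\mu(s,y)\,ds+t\int_t^1\frac{\mu(s,y)}{s}\,ds+t\int_1^\infty\frac{\mu(s,y)}{s}\,ds\le 2\int_0^1\mu(s,y)\,ds+2\int_1^\infty\frac{\mu(s,y)}{1+s}\,ds\lesssim\|y\|_{\Lambda_{\log}(0,\infty)},
\end{equation*}
after which your conclusion $\mu(x)=0$ follows either by passing to an a.e.\ convergent subsequence, or more directly from $\|\mu(x)\|_{L_{1,\infty}+L_\infty}\le\|S\mu(y_n)\|_{L_{1,\infty}+L_\infty}\to0$ as in the paper.
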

\begin{proof}Let us prove that the expression
\begin{equation}\label{norm exp}\|x\|_{F(0,\infty)}:=\inf\{\|y\|_{E(0,\infty)}:\mu(x)\leq S\mu(y)\}
\end{equation}
defines a quasi-norm in $F(0,\infty).$ Clearly, if $x=0,$ then, by \eqref{norm exp} we obtain that $\|x\|_{F(0,\infty)}=0,$ and for every scalar $\alpha,$ we have $\|\alpha
x\|_{F(0,\infty)}=|\alpha|\cdot\|x\|_{F(0,\infty)}.$ We shall prove the non-trivial part. If $\|x\|_{F(0,\infty)}=0,$ then there exists $y_n$ in $E(0,\infty)$ with $\mu(x)\leq S\mu(y_n)$ such that
$\|y_n\|_{E(0,\infty)}\rightarrow0,$ as $n\rightarrow \infty.$ By assumption, we have that $S:E(0,\infty)\rightarrow (L_{1,\infty}+L_{\infty})(0,\infty)$ (see \ref{S}). Since $S$ is positive operator (see \cite[Chapter
III, p. 134]{BSh}), it follows from \cite[Proposition 1.3.5, p. 27]{MN} that
 $S:E(0,\infty)\rightarrow (L_{1,\infty}+L_{\infty})(0,\infty)$ is bounded. Hence, $\|S\mu(y_n)\|_{(L_{1,\infty}+L_{\infty})(0,\infty)}\rightarrow 0$ as $n\rightarrow \infty.$  From the condition $\mu(x)\leq S\mu(y_n),$ for
 every $n\in\mathbb{Z}_{+},$ we have
$$\|\mu(x)\|_{(L_{1,\infty}+L_{\infty})(0,\infty)}\leq\|S\mu(y_n)\|_{(L_{1,\infty}+L_{\infty})(0,\infty)}\rightarrow 0,$$
which shows that $x=0.$

Let us prove that $\|\cdot\|_{F(0,\infty)}$ satisfies quasi-triangle inequality. For $j=1,2,$ let $x_{j}\in F(0,\infty),$ and fix $\varepsilon>0.$ Then there exist $y_{j}\in E(0,\infty)$ with
$\mu(x_{j})\leq S\mu(y_{j})$ and $\|y_{j}\|_{E(0,\infty)}<\|x_{j}\|_{F(0,\infty)}+\varepsilon.$
Hence, from \eqref{norm exp} and since $\|\cdot\|_{E(0,\infty)}$ is quasi-norm, it follows from  \eqref{linearity} and \cite[Remark 18]{F}
 %and \cite[Proposition 2 (c)]{HM}
 that
\begin{eqnarray*}\begin{split}\|x_{1}+x_{2}\|_{F(0,\infty)}
&\leq\|\sigma_{2}\mu(y_{1})+\sigma_{2}\mu(y_{2})\|_{E(0,\infty)}=\|\sigma_{2}\left(\mu(y_{1})+\mu(y_{2})\right)\|_{E(0,\infty)}\\
&\leq2\cdot c_{E}\|\mu(y_{1})+\mu(y_{2})\|_{E(0,\infty)}\leq2\cdot c^{2}_{E}\left(\|y_{1}\|_{E(0,\infty)}+\|y_{2}\|_{E(0,\infty)}\right)
\end{split}
\end{eqnarray*}
and by choice of $y_{1},y_{2}\in E(0,\infty)$
$$\|x_{1}+x_{2}\|_{F(0,\infty)}\leq2\cdot c^{2}_{E}\left(\|x_{1}\|_{F(0,\infty)}+\|x_{2}\|_{F(0,\infty)}\right)+4c^{2}_{E}\cdot\varepsilon.$$
Since $\varepsilon$ is arbitrary, letting $\varepsilon\rightarrow 0,$ we obtain that $\|\cdot\|_{F(0,\infty)}$ defines a quasi-norm. Thus, $(F(0,\infty),\|\cdot\|_{F(0,\infty)})$ is a linear quasi-normed
space.
\end{proof}
Now we are ready to prove Theorem \ref{quasi-banach opt range}.
\begin{proof}[Proof of Theorem \ref{quasi-banach opt range}]
 By Lemma \ref{linearity of op range} and \ref{quasi-norm of op range}, $(F(0,\infty),\|\cdot\|_{F(0,\infty)})$ is a linear quasi-normed space.

First, we prove that $F(0,\infty)$ is a quasi-Banach space. To show that $F(0,\infty)$ is quasi-Banach, it remains to see that it is complete. Since the dilation operator is bounded in any quasi-Banach
symmetric space, it follows that there is a constant $c_{E}$ depending on $E$ such that
\begin{equation}\label{dilation bound}
\|\sigma_{2^{n}}y\|_{E(0,\infty)}\leq c_{E}^{n}\|y\|_{E(0,\infty)}
\end{equation}
(see \cite[Remark 18]{F}) for all $y\in E(0,\infty)$ and $n\in \mathbb{N}.$
On the other hand, since $E(0,\infty)$ is quasi-Banach symmetric space, it follows from Aoki-Rolewicz theorem that every quasi-normed space (such as $E(0,\infty))$ is metrizable (see \cite[Theorem 1.3]{KPR}) and there exists $0<p<1,$ such that
$$\|y_{1}+y_{2}\|_{E(0,\infty)}^{p}\leq\|y_{1}\|_{E(0,\infty)}^{p}+\|y_{2}\|_{E(0,\infty)}^{p}$$
for all $y_{1},y_{2}\in E(0,\infty).$ We have to show that an arbitrary Cauchy sequence in $F(0,\infty)$
converges to an element from $F(0,\infty).$ Fix such a sequence $\{x_n\}_{n=1}^{\infty}\subset F(0,\infty).$  Without loss of generality, assume that for $\varepsilon< c_{E}^{-1},$ we have
$$\|x_{n+1}-x_{n}\|_{F(0,\infty)}\leq \varepsilon^{n} $$ and
$$\mu(x_{n+1}-x_{n})\leq S\mu(y_n)$$ such that
$\|y_n\|_{E(0,\infty)}\leq 2\cdot \varepsilon^{n}.$ Let us show that the series $\sum_{n=1}^{\infty}\sigma_{2^{n}}\mu(x_{n+1}-x_{n})$ converges a.e.
 Since $S$ is linear and commutes with the dilation operator,
it follows that
\begin{equation}\begin{split}\label{partial S est}
\sum_{n=1}^{\infty}\sigma_{2^{n}}\mu(x_{n+1}-x_{n})\leq\sum_{n=1}^{\infty}\sigma_{2^{n}}S\mu(y_{n})=\sum_{n=1}^{\infty}S(\sigma_{2^{n}}\mu(y_{n}))
=S\left(\sum_{n=1}^{\infty}\sigma_{2^{n}}\mu(y_{n})\right).
\end{split}\end{equation}
Hence,
\begin{eqnarray*}\begin{split}%\label{p norm est}
\left\|\sum_{n=1}^{\infty}\sigma_{2^{n}}\mu(y_{n})\right\|_{E(0,\infty)}^{p}
\leq\sum_{n=1}^{\infty}\|\sigma_{2^{n}}\mu(y_{n})\|_{E(0,\infty)}^{p}\\
\stackrel{\eqref{dilation bound}}\leq\sum_{n=1}^{\infty}c_{E}^{np}\|y_{n}\|_{E(0,\infty)}^{p}
\leq\sum_{n=1}^{\infty}(c_{E}\cdot\varepsilon)^{np}<\infty.
\end{split}\end{eqnarray*}
Therefore, the series $\sum_{n=1}^{\infty}\sigma_{2^{n}}\mu(y_{n})$ converges in a.e. in $E(0,\infty).$ Since $S$ is continuous on $E(0,\infty)$ by assumption, it follows from \eqref{partial S est} that the series
$\sum_{n=1}^{\infty}\sigma_{2^{n}}\mu(x_{n+1}-x_{n})$ belongs to $F(0,\infty).$ Then, by Lemma \ref{measure conv}, the series $\sum_{n=1}^{\infty}(x_{n+1}-x_{n})$ converges in measure and belongs to $F(0,\infty),$
and we have
$$\|x-x_{1}\|_{F(0,\infty)}=\left\|\sum_{n=1}^{\infty}(x_{n+1}-x_{n})\right\|_{F(0,\infty)}\stackrel{\eqref{est conv}}\leq\left\|\sum_{n=1}^{\infty}\sigma_{2^{n}}\mu(x_{n+1}-x_{n})\right\|_{F(0,\infty)}<\infty.$$
 This shows that $x\in F(0,\infty).$ So, $F(0,\infty)$ is complete. On the other hand, since $\|x\|_{F(0,\infty)}=\|\mu(x)\|_{F(0,\infty)},$ it follows that $F(0,\infty)$ is a symmetric space. So, the  space $(F(0,\infty),\|\cdot\|_{F(0,\infty)})$ is a quasi-Banach symmetric space.

 Next, we prove second part of the theorem. Let $G(0,\infty)\subset S(0,\infty)$ be a symmetric module over the algebra $L_{\infty}(0,\infty)$ (i.e. if $x_1\sim x_2$ and $x_1\in G(0,\infty),$  then $x_2\in
 G(0,\infty)$). If $S:E(0,\infty)\to G(0,\infty),$ then $S\mu(x)\in G(0,\infty).$ By definition of $F(0,\infty),$ we have that $F(0,\infty)\subset G(0,\infty).$
Hence, $F(0,\infty)$ is minimal receptacle in the category of all symmetric function modules. However, $F(0,\infty)$ is a symmetric quasi-Banach space. Thus, it is the minimal receptacle in the category of
quasi-Banach spaces.
\end{proof}

\section{the non-commutative optimal symmetric quasi-banach space for the triangular truncation operator $T$}

In this section, we resolve Problems \ref{main problem} and \ref{second problem} in full generality.
 Let $E$ be a symmetric quasi-Banach sequence space on $\mathbb{Z}_{+}.$ If $E(\mathbb{Z}_{+})\subset\Lambda_{\log}(\mathbb{Z}_{+}),$ then the operator
 $$S^{d}:\Lambda_{\log}(\mathbb{Z}_{+})\rightarrow (\ell_{1,\infty}+\ell_{\infty})(\mathbb{Z}_{+})$$
  is well defined (see \eqref{S dis}).

 Let $E(\mathbb{Z}_{+})$ be a symmetric quasi-Banach sequence space and $\mathcal{E}(H)$ be the corresponding non-commutative symmetric quasi-Banach space (see \cite{F}).
If $S:E(\mathbb{Z}_{+})\rightarrow (\ell_{1,\infty}+\ell_{\infty})(\mathbb{Z}_{+}),$ then by Theorem \ref{first main theorem}, the operator $T$ (see \eqref{T-oper}) is well defined on the corresponding non-commutative symmetric quasi-Banach space
$\mathcal{E}(H).$
The optimal quasi-Banach symmetric space for the discrete Calder\'{o}n operator $S^{d}$ is constructed similarly to Definition \ref{quasi-banach range}.
\begin{definition}\label{discrete quasi-banach range} Let $E$ be a quasi-Banach symmetric sequence space on $\mathbb{Z}_{+}.$ Let $E(\mathbb{Z}_{+})\subset \Lambda_{\log}(\mathbb{Z}_{+})$ and let
$S^{d}$ be the operator defined in \eqref{S dis}. Define
$$F(\mathbb{Z}_{+}):=\{a\in(\ell_{1,\infty}+\ell_{\infty})(\mathbb{Z}_{+}):  \exists b\in E(\mathbb{Z}_{+}), \, \mu(a)\leq S^{d}\mu(b)\}$$
such that
$$\|a\|_{F(\mathbb{Z}_{+})}:=\inf\{\|b\|_{E(\mathbb{Z}_{+})}:\mu(a)\leq S^{d}\mu(b)\}<\infty.$$
\end{definition}

Let $E(\mathbb{Z}_{+})$ be a symmetric quasi-Banach sequence space.
Then the space $$E(0,\infty)=\Big\{x\in L_{\infty}(0,\infty):\left\{\int_{n}^{n+1}\mu(s,x)ds\right\}_{n\geq0}\in E(\mathbb{Z}_{+})\Big\}$$ is a symmetric quasi-Banach space
equipped with the norm
$$\|x\|_{E(0,\infty)}:=\max\Big\{\|x\|_{L_{\infty}(0,\infty)},\Big\|\left\{\int_{n}^{n+1}\mu(s,x)ds\right\}_{n\geq0}\Big\|_{E(\mathbb{Z}_{+})}\Big\}.$$
By Theorem \ref{quasi-banach opt range}, $F(0,\infty)$ is also symmetric quasi-Banach space, and we have
$$F(\mathbb{Z}_{+})=F(0,\infty)\cap\ell_{\infty}(\mathbb{Z}_{+}).$$ Hence, $F(\mathbb{Z}_{+})$ is a symmetric quasi-Banach space.

%As in Theorem, by a similar argument, it can be proved that the space $(F(\mathbb{Z}_{+}),\|\cdot\|_{F(\mathbb{Z}_{+})})$ is a
%symmetric quasi-Banach space.

\begin{rmk}\label{coincidence opt range}
Since $F(\mathbb{Z}_{+})$ is a symmetric quasi-Banach sequence space, it follows from \cite[Theorem 4]{F} that there exists corresponding non-commutative symmetric quasi-Banach space defined
by
\begin{equation}\label{Calkin space}\mathcal{F}(H):=\{A\in B(H):\mu(A)\in F(\mathbb{Z}_{+})\}
\end{equation}
with the norm
$$\|A\|_{\mathcal{F}(H)}:=\|\mu(A)\|_{F(\mathbb{Z}_{+})}.$$
%Then, it is easy to see by \cite[Theorem 4]{F} that the space $\mathcal{R}[S^{d},E(B(H))]$ %coincides with $\mathcal{R}[S^{d},E](B(H)).$
\end{rmk}

\begin{proposition} Let $\mathcal{E}(H)=\mathcal{L}_{1,\infty}(H),$ then
$$\mathcal{F}(H)=\left\{A\in (\mathcal{L}_{1,\infty}+\mathcal{L}_{\infty})(H):\exists c_{A}, \mu(n,A)\leq c_{A}\frac{\log(n+2)}{n+1}, \, n\in \mathbb{Z}_{+}\right\}.$$
\end{proposition}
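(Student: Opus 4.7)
By the construction of $\mathcal{F}(H)$ (see Definition \ref{discrete quasi-banach range} and Remark \ref{coincidence opt range}), the plan is to compute explicitly the set of sequences of the form $S^{d}\mu(b)$ with $b \in \ell_{1,\infty}(\mathbb{Z}_{+})$ and show that, up to a universal constant, this produces exactly the majorants of the form $c\,\frac{\log(n+2)}{n+1}$.

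\textbf{Upper bound: $\mathcal{F}(H)$ is contained in the right-hand side.} Fix $A\in\mathcal{F}(H)$. By definition, there exists $b\in \ell_{1,\infty}(\mathbb{Z}_{+})$ with $\mu(A)\leq S^{d}\mu(b)$. Setting $M=\|b\|_{\ell_{1,\infty}(\mathbb{Z}_{+})}$, one has $\mu(k,b)\leq M/(k+1)$ for all $k\geq 0$. Splitting $S^{d}\mu(b)(n)$ according to formula \eqref{S dis}, the first term is estimated as
$$\frac{1}{n+1}\sum_{k=0}^{n}\mu(k,b)\leq \frac{M}{n+1}\sum_{k=0}^{n}\frac{1}{k+1}\lesssim M\cdot\frac{\log(n+2)}{n+1},$$
while the tail satisfies
$$\sum_{k=n+1}^{\infty}\frac{\mu(k,b)}{k}\leq M\sum_{k=n+1}^{\infty}\frac{1}{k(k+1)}\lesssim \frac{M}{n+1}\leq M\cdot\frac{\log(n+2)}{n+1}.$$
Summing the two estimates yields $\mu(n,A)\leq S^{d}\mu(b)(n)\leq c_{abs}\,M\,\frac{\log(n+2)}{n+1}$, which gives the inclusion (with $c_{A}=c_{abs}M$) together with $A\in (\mathcal{L}_{1,\infty}+\mathcal{L}_{\infty})(H)$.

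\textbf{Lower bound: the right-hand side is contained in $\mathcal{F}(H)$.} Here I would choose the canonical witness $b_{0}\in \ell_{1,\infty}(\mathbb{Z}_{+})$ given by $b_{0}(k)=\frac{1}{k+1}$. Direct computation using \eqref{S dis} shows that
$$S^{d}\mu(b_{0})(n)=\frac{1}{n+1}\sum_{k=0}^{n}\frac{1}{k+1}+\sum_{k=n+1}^{\infty}\frac{1}{k(k+1)}\gtrsim \frac{\log(n+2)}{n+1},\qquad n\geq 0.$$
Consequently, if $A$ belongs to the right-hand side of the claimed identity, i.e., $A\in(\mathcal{L}_{1,\infty}+\mathcal{L}_{\infty})(H)$ and $\mu(n,A)\leq c_{A}\,\frac{\log(n+2)}{n+1}$, then $\mu(A)\leq c_{abs}\,c_{A}\,S^{d}\mu(b_{0})$, so by Definition \ref{discrete quasi-banach range} and Remark \ref{coincidence opt range} we conclude $A\in\mathcal{F}(H)$.

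\textbf{Main obstacle.} The argument is essentially a bookkeeping of the two halves of the Calder\'on sum on $b_{0}(k)=1/(k+1)$, so there is no real technical obstruction; the only mild care needed is to check both the harmonic-sum upper bound and a matching lower bound to ensure that $b_{0}$ is genuinely extremal within $\ell_{1,\infty}(\mathbb{Z}_{+})$, which is exactly what makes the two inclusions match and identifies the optimal range.
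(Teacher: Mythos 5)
Your proof is correct and follows essentially the same route as the paper: compute $S^{d}$ applied to the extremal sequence $1/(k+1)$ and observe it is comparable to $\log(n+2)/(n+1)$, then read off both inclusions from Definition \ref{discrete quasi-banach range} and Remark \ref{coincidence opt range}. You merely spell out the two inclusions (splitting the Calder\'on sum for the upper bound, using positive homogeneity of $S^{d}$ to absorb the constant for the lower bound) more explicitly than the paper's terse argument does.
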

\begin{proof}If $\mu(k,A)=\frac{1}{k+1}, \,\ k\in\mathbb{Z}_{+},$ then by \eqref{S dis}, we have
$$(S^{d}\mu(A))(n):=\frac{1}{n+1}\sum_{k=0}^{n}\frac{1}{k+1}+\sum_{k=n+1}^{\infty}\frac{1}{k(k+1)}$$
and
$$
(S^{d}\mu(A))(n)\approx \frac{\log(n+1)}{n+1}
$$
for large $n.$
Therefore, if $\mathcal{E}(H)=\mathcal{L}_{1,\infty}(H),$ then the optimal range for the operator $T$ is
$$\mathcal{F}(H)=\{A\in(\mathcal{L}_{1,\infty}+\mathcal{L}_{\infty})(H):\exists c_{A}, \mu(n,A)\leq c_{A}\frac{\log(n+2)}{n+1}, \, n\in \mathbb{Z}_{+}\},$$
where $c_{A}$ is a constant depending only $A.$
\end{proof}

\begin{rmk} The optimal symmetric quasi-Banach space $F(\mathbb{Z}_{+})$ (resp. $F(0,\infty)$) in Definition \ref{discrete quasi-banach range} (resp. Definition \ref{quasi-banach range}) is defined similarly on $\mathbb{Z}$ (resp. $\mathbb{R}$) and becomes symmetric quasi-Banach space.
\end{rmk}

The following theorem is a main result of this section which completely resolves Problem \ref{main problem}.
\begin{thm}\label{opt. range th H} Let $E=E(\mathbb{R})\subset\Lambda_{\log}(\mathbb{R})$ be a symmetric quasi-Banach function space and let $\mathcal{M}$ be a semifinite atomless von Neumann algebra. If
 $F$ is given by Definition \ref{quasi-banach range}, then
 the space $\mathcal{F}(\mathcal{M}\bar{\otimes}L_{\infty}(\mathbb{R}))$ is the optimal symmetric quasi-Banach range for the Hilbert transform $1\otimes\mathcal{H}$ on $\mathcal{E}(\mathcal{M}\bar{\otimes}L_{\infty}(\mathbb{R})).$
\end{thm}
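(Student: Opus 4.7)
The plan is to split the proof into an \emph{upper bound}, obtained via Theorem \ref{first main theorem}, and an \emph{optimality} argument extracted from Remark \ref{dom of Hilbert trans}. For the upper bound I set $\mathcal{T} := 1 \otimes (i\mathcal{H})$. Since $\mathcal{H}^{\ast} = -\mathcal{H}$ on $L_{2}(\mathbb{R})$ with $\|\mathcal{H}\|_{L_{2} \to L_{2}} = 1$ by Plancherel, the operator $\mathcal{T}$ is a self-adjoint contraction on $\mathcal{L}_{2}(\mathcal{M} \bar{\otimes} L_{\infty}(\mathbb{R}))$. The weak-type $(1,1)$ bound $\|(1 \otimes \mathcal{H})(A)\|_{\mathcal{L}_{1,\infty}} \lesssim \|A\|_{\mathcal{L}_{1}}$ is available from the non-commutative Calder\'on--Zygmund theory (an operator-valued analogue of Kolmogorov's inequality). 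Theorem \ref{first main theorem} (ii) then yields $\mu(\mathcal{T}(A)) \leq c_{abs} S\mu(A)$ for every $A \in \Lambda_{\log}(\mathcal{M} \bar{\otimes} L_{\infty}(\mathbb{R}))$, and since multiplication by $i$ is unitary we have $\mu(\mathcal{T}(A)) = \mu((1 \otimes \mathcal{H})(A))$. Taking $B := A \in \mathcal{E}(\mathcal{M} \bar{\otimes} L_{\infty}(\mathbb{R}))$ as the witnessing element in Definition \ref{quasi-banach range} gives $(1 \otimes \mathcal{H})(A) \in \mathcal{F}(\mathcal{M} \bar{\otimes} L_{\infty}(\mathbb{R}))$ with norm $\leq c_{abs} \|A\|_{\mathcal{E}}$.

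For the optimality, suppose $G$ is any symmetric quasi-Banach space on $\mathbb{R}$ such that $1 \otimes \mathcal{H} : \mathcal{E}(\mathcal{M} \bar{\otimes} L_{\infty}(\mathbb{R})) \to \mathcal{G}(\mathcal{M} \bar{\otimes} L_{\infty}(\mathbb{R}))$ with some constant $C$; I will then show $F(\mathbb{R}) \hookrightarrow G(\mathbb{R})$, from which $\mathcal{F}(\mathcal{M} \bar{\otimes} L_{\infty}(\mathbb{R})) \hookrightarrow \mathcal{G}(\mathcal{M} \bar{\otimes} L_{\infty}(\mathbb{R}))$ is immediate. Since $\mathcal{M}$ is semifinite and atomless, fix a projection $e \in \mathcal{M}$ with $\tau(e) = 1$. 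For any $z \in E(\mathbb{R})$, set $\tilde z := \mu(z)\chi_{(0,\infty)}$, a non-negative function on $\mathbb{R}$ that decreases on $(0,\infty)$ and vanishes on $(-\infty,0]$. By equimeasurability $\mu(\tilde z) = \mu(z)$, so Definition \ref{Sym} (d) gives $\tilde z \in E(\mathbb{R})$ with $\|\tilde z\|_E = \|z\|_E$. A short computation of distribution functions yields $\mu(e \otimes \tilde z) = \mu(\tilde z) = \mu(z)$ since $\tau(e) = 1$, so $e \otimes \tilde z \in \mathcal{E}(\mathcal{M} \bar{\otimes} L_{\infty}(\mathbb{R}))$ with $\|e \otimes \tilde z\|_{\mathcal{E}} = \|z\|_{E}$; similarly $\|(1\otimes\mathcal{H})(e \otimes \tilde z)\|_{\mathcal{G}} = \|\mathcal{H}\tilde z\|_{G}$, so the hypothesis delivers $\|\mathcal{H}\tilde z\|_{G} \leq C \|z\|_{E}$.

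Next I would apply Remark \ref{dom of Hilbert trans} to $\tilde z$ to get $|(\mathcal{H}\tilde z)(-t)| \geq (2\pi)^{-1}(S\mu(z))(t)$ for all $t > 0$. Since $S\mu(z)$ is decreasing on $(0,\infty)$, a direct distribution-function argument upgrades this to the pointwise singular-value bound $\mu(t, \mathcal{H}\tilde z) \geq (2\pi)^{-1}(S\mu(z))(t)$ for all $t > 0$, and the lattice ideal property of $G$ then gives $\|S\mu(z)\|_{G} \leq 2\pi \|\mathcal{H}\tilde z\|_{G} \leq 2\pi C \|z\|_{E}$. Finally, for any $y \in F(\mathbb{R})$ and any $\varepsilon > 0$, Definition \ref{quasi-banach range} furnishes some $z \in E(\mathbb{R})$ with $\mu(y) \leq S\mu(z)$ and $\|z\|_{E} \leq \|y\|_{F} + \varepsilon$; hence $\|y\|_{G} = \|\mu(y)\|_{G} \leq \|S\mu(z)\|_{G} \leq 2\pi C(\|y\|_{F} + \varepsilon)$, and letting $\varepsilon \to 0$ completes the embedding.

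The main technical obstacle is the non-commutative weak-type $(1,1)$ estimate for $1 \otimes \mathcal{H}$ needed to trigger Theorem \ref{first main theorem} (ii); this cannot be reduced fibrewise to the scalar Kolmogorov inequality and relies on the non-commutative Calder\'on--Zygmund decomposition developed in the literature. Once this ingredient is in hand, the remainder is a clean pairing of the abstract upper bound from Theorem \ref{first main theorem} with the elementary lower bound on the negative semi-axis supplied by Remark \ref{dom of Hilbert trans}, and the two together pin down $\mathcal{F}(\mathcal{M} \bar{\otimes} L_{\infty}(\mathbb{R}))$ as the minimal symmetric quasi-Banach receptacle for $1 \otimes \mathcal{H}$ on $\mathcal{E}(\mathcal{M} \bar{\otimes} L_{\infty}(\mathbb{R}))$.
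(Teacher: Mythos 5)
Your proposal is correct and its overall architecture (upper bound via Theorem \ref{first main theorem}, then minimality by testing on rearranged elements) matches the paper, but the two halves are realized somewhat differently. For the upper bound the paper simply invokes Randrianantoanina's result (\cite[Theorem 4.3 / Corollary 4.6]{Ran2}) to verify the hypotheses of Theorem \ref{first main theorem}; your version is the same in substance, with two small improvements in bookkeeping: you pass to $1\otimes(i\mathcal{H})$ to make the contraction genuinely self-adjoint (the paper leaves the skew-adjointness of $\mathcal{H}$ implicit), and you should pin the weak-type $(1,1)$ input to the concrete citation rather than ``the literature,'' since that estimate is exactly the nontrivial external ingredient. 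For optimality the paper quotes \cite[Proposition III.4.10, p.~140]{BSh} (for each $x$ there is an equimeasurable $y$ with $S\mu(x)\leq c_{abs}\mu(\mathcal{H}y)$) and asserts that boundedness of $1\otimes\mathcal{H}$ ``immediately'' gives $\mathcal{H}:E(\mathbb{R})\to G(\mathbb{R})$; you instead rebuild the lower bound from Remark \ref{dom of Hilbert trans} applied to $\tilde z=\mu(z)\chi_{(0,\infty)}$, and you make the reduction from the operator-valued to the scalar setting explicit through the embedding $z\mapsto e\otimes\tilde z$ with $\mu(e\otimes\tilde z)=\mu(z)$. This buys a self-contained argument using only material already in the paper, at the price of one small caveat: a projection with $\tau(e)=1$ exists only when $\tau(1)\geq 1$; if $\tau(1)<1$ take $e$ with $\tau(e)=c<1$, so that $\mu(e\otimes\tilde z)=\sigma_{c}\mu(z)$, and absorb the dilation using its boundedness on symmetric quasi-Banach spaces (the same device the paper uses elsewhere), which only changes the constant. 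Your rearrangement upgrade from $|(\mathcal{H}\tilde z)(-t)|\geq(2\pi)^{-1}(S\mu(z))(t)$ to $\mu(t,\mathcal{H}\tilde z)\geq(2\pi)^{-1}(S\mu(z))(t)$ is valid because $S\mu(z)$ is decreasing and rearrangement is monotone under restriction to the negative half-line, and the final $\varepsilon$-argument giving $F(\mathbb{R})\hookrightarrow G(\mathbb{R})$, hence $\mathcal{F}\hookrightarrow\mathcal{G}$ by the Calkin correspondence, is exactly the conclusion the paper reaches.
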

\begin{proof}
By Theorem 4.3 (or Corollary 4.6) in \cite{Ran2}, the operator $1\otimes\mathcal{H}$ satisfies the conditions of Theorem \ref{first main theorem}. Hence, by Theorem \ref{first main theorem}, we have
$$\mu((1\otimes\mathcal{H})(x))\leq c_{{\rm abs}}S\mu(x)$$
for all $x\in\Lambda_{\log}(\mathcal{M}\bar{\otimes} L_{\infty}(\mathbb{R})).$ By the definition of $F$ (see Definition \ref{quasi-banach range}), we have that $(1\otimes\mathcal{H})(x)\in \mathcal{F}(\mathcal{M}\bar{\otimes}L_{\infty}(\mathbb{R}))$ for all $x\in\Lambda_{\log}(\mathcal{M}\bar{\otimes} L_{\infty}(\mathbb{R})).$

Now, suppose that $G(\mathbb{R})$ is another symmetric quasi-Banach space such that $1\otimes\mathcal{H}:\mathcal{E}(\mathcal{M}\bar{\otimes}L_{\infty}(\mathbb{R}))\rightarrow \mathcal{G}(\mathcal{M}\bar{\otimes}L_{\infty}(\mathbb{R}))$ is bounded. It follows immediately that $\mathcal{H}:E(\mathbb{R})\to G(\mathbb{R}).$ Take $x\in E(\mathbb{R}).$ By \cite[Proposition III. 4.10, p. 140]{BSh} there exists $y$ with $\mu(x)=\mu(y)$ such that $S\mu(x)\leq c_{abs}\mu(\mathcal{H}y),$ which shows that $S\mu(x)\in G(0,\infty).$ Since $x\in E(\mathbb{R})$ is arbitrary, it follows from \eqref{S proper} that
$S:E(0,\infty)\rightarrow F(0,\infty).$ Hence, $F(0,\infty)\subset G(0,\infty).$
\end{proof}

If $\mathcal{M}=\mathbb{C},$ then the result of Theorem \ref{opt. range th H} coincides with that of Theorem \ref{quasi-banach opt range}.

The following theorem gives a solution to Problem \ref{second problem}.
\begin{thm}\label{opt. range th T} Let $E=E(\mathbb{Z}_{+})\subset\Lambda_{\log}(\mathbb{Z}_{+})$ be a symmetric quasi-Banach sequence space and $\mathcal{E}(H)$ be the corresponding non-commutative symmetric quasi-Banach ideal. If $\mathcal{F}(H)$ is given by Remark \ref{coincidence opt range}, then the space $\mathcal{F}(H)$ is the optimal symmetric quasi-Banach range for the operator $T$ on $\mathcal{E}(H).$

\end{thm}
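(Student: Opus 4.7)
The proof will follow the same two-step pattern as Theorem \ref{opt. range th H}, but with the discrete Calder\'{o}n operator $S^{d}$ in place of $S$, with the input from Theorem \ref{T theorem} replacing Boyd's commutative lower estimate \cite[Proposition III.4.10, p.~140]{BSh}. The plan is to (a) produce the boundedness $T\colon \mathcal{E}(H)\to \mathcal{F}(H)$ via Theorem~\ref{first main theorem}, and (b) show that any other receptacle $\mathcal{G}(H)$ for $T$ on $\mathcal{E}(H)$ contains $\mathcal{F}(H)$, using Theorem~\ref{T theorem} to ``invert'' $S^{d}$ against $T$.

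For the upper estimate, recall that Theorem~\ref{weak est for T} establishes $T\colon \mathcal{L}_{1}(H)\to\mathcal{L}_{1,\infty}(H)$ with an absolute constant. Moreover, $T$ is self-adjoint on $\mathcal{L}_{2}(H)$ (this is essentially the content of the proof of Theorem~\ref{weak est for T}). Hence the hypotheses of Theorem~\ref{first main theorem}(ii) are satisfied, and Remark~\ref{T dom} records the ensuing pointwise domination $\mu(T(A))\leq c_{abs}\, S^{d}\mu(A)$ for every $A\in\Lambda_{\log}(H)$. Given $A\in\mathcal{E}(H)$, the sequence $\mu(A)$ lies in $E(\mathbb{Z}_{+})\subset\Lambda_{\log}(\mathbb{Z}_{+})$, so the inequality $\mu(T(A))\leq c_{abs}\, S^{d}\mu(A)$ together with Definition~\ref{discrete quasi-banach range} immediately yields $T(A)\in\mathcal{F}(H)$ with
\[
\|T(A)\|_{\mathcal{F}(H)}\leq c_{abs}\|A\|_{\mathcal{E}(H)}.
\]

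For the optimality part, suppose $\mathcal{G}(H)$ is a symmetric quasi-Banach ideal such that $T\colon\mathcal{E}(H)\to\mathcal{G}(H)$ is bounded, and let $G=G(\mathbb{Z}_{+})$ be the corresponding symmetric quasi-Banach sequence space. I wish to show $F(\mathbb{Z}_{+})\subseteq G(\mathbb{Z}_{+})$, i.e.\ $\mathcal{F}(H)\subseteq \mathcal{G}(H)$. Fix $a\in F(\mathbb{Z}_{+})$. By Definition~\ref{discrete quasi-banach range}, pick $b\in E(\mathbb{Z}_{+})$ with $\mu(a)\leq S^{d}\mu(b)$. Extend $b$ by zero on the negative integers and apply Theorem~\ref{T theorem} to obtain an operator $V$ on $L_{2}(-\pi,\pi)$ with $\mu(V)=\mu(b)$ and $S^{d}\mu(V)\leq c_{abs}\,\mu(T(V))$. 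Transporting $V$ through the unitary $U$ of~\eqref{unitary op} and using the intertwining~\eqref{rest of T oper}, we may regard $V\in\mathcal{E}(H)$ with $T(V)\in\mathcal{G}(H)$ by hypothesis. Combining the three estimates,
\[
\mu(a)\leq S^{d}\mu(b)=S^{d}\mu(V)\leq c_{abs}\,\mu(T(V)),
\]
so any operator $A\in B(H)$ with $\mu(A)=\mu(a)$ satisfies $\mu(A)\leq c_{abs}\,\mu(T(V))$, whence $A\in\mathcal{G}(H)$ and
\[
\|a\|_{G(\mathbb{Z}_{+})}=\|A\|_{\mathcal{G}(H)}\leq c_{abs}\|T(V)\|_{\mathcal{G}(H)}\leq c_{abs}\|V\|_{\mathcal{E}(H)}=c_{abs}\|b\|_{E(\mathbb{Z}_{+})}.
\]
Taking the infimum over all admissible $b$ in Definition~\ref{discrete quasi-banach range} yields $\|a\|_{G(\mathbb{Z}_{+})}\leq c_{abs}\|a\|_{F(\mathbb{Z}_{+})}$, which gives the continuous embedding $\mathcal{F}(H)\hookrightarrow\mathcal{G}(H)$.

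The only step requiring any delicacy is the transport between $L_{2}(\mathbb{R})$ and $L_{2}(-\pi,\pi)$: one must verify that applying the unitary conjugation $R$ does not distort singular values or ideal memberships (which is immediate since $R$ is a $\ast$-isomorphism onto its image), and that the relation~\eqref{rest of T oper} allows the use of Theorem~\ref{T theorem} in the statement concerning $T$ on $L_{2}(\mathbb{R})$. Everything else is a routine concatenation of Theorem~\ref{first main theorem}(ii), Remark~\ref{T dom}, Definition~\ref{discrete quasi-banach range}, and Theorem~\ref{T theorem}, mirroring the logic already deployed in the proof of Theorem~\ref{opt. range th H}.
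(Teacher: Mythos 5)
Your proposal is correct and follows essentially the same route as the paper: the upper bound via Theorem \ref{weak est for T}, Theorem \ref{first main theorem} and Definition \ref{discrete quasi-banach range}, and optimality via the lower estimate of Theorem \ref{T theorem} transported through the unitary intertwining \eqref{rest of T oper}. Your version merely reorders the optimality step (starting from $a\in F(\mathbb{Z}_{+})$ rather than $a\in E(\mathbb{Z}_{+})$) and records the quantitative embedding constant, which the paper leaves implicit.
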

\begin{proof}
First, let us see that $T:\mathcal{E}(H)\rightarrow \mathcal{F}(H)$ is bounded. If $A\in \mathcal{E}(H),$  then by Theorem \ref{weak est for T}, $T$ satisfies the assumptions of Theorem \ref{first main theorem}. Therefore, we have $\mu(T(A))\leq c_{abs} S^{d}\mu(A).$ By Remark \ref{coincidence opt range}, we have
\begin{eqnarray*}\begin{split}
\|T(A)\|_{\mathcal{F}(H)}
&\leq c_{abs}\|S^{d}\mu(A)\|_{F(\mathbb{Z}_{+})}=c_{abs}\inf\{\|b\|_{E(\mathbb{Z}_{+})}:S^{d}\mu(A)\leq S^{d}\mu(b)\}\\
&=c_{abs}\inf\{\|B\|_{\mathcal{E}(H)}:S^{d}\mu(A)\leq S^{d}\mu(B)\}\leq c_{abs}\|A\|_{\mathcal{E}(H)}.
\end{split}\end{eqnarray*}
Hence, $T:\mathcal{E}(H)\rightarrow \mathcal{F}(H)$ is bounded.
Now, suppose that $\mathcal{G}(H)$ is another symmetric quasi-Banach ideal such that $T:\mathcal{E}(H)\rightarrow \mathcal{G}(H)$ is bounded, and
let us show that $\mathcal{F}(H)\subset \mathcal{G}(H).$
Let $a\in E(\mathbb{Z}_{+}).$ By Theorem \ref{T theorem}, there exists an operator $A$ such that $\mu(a)=\mu(A)$ and $S^{d}\mu(a)\leq c_{abs}\mu(T(A)).$ Since $T(A)\in \mathcal{G}(H),$ it follows that $S^{d}\mu(a)\in
G(\mathbb{Z}_{+}).$

Therefore, we have that $\mathcal{F}(H)\subset \mathcal{G}(H)$ as claimed.
\end{proof}

Let us denote
$$(L_{1,\infty}(0,\infty))^{0}:=\{x\in L_{1,\infty}(0,\infty):\lim_{t\rightarrow0+} t\mu(t,x)=0\}.$$

The following proposition shows that the optimal range for the Hilbert transform on $L_{1}(\mathbb{R})$ is $(L_{1,\infty}(\mathbb{R}))^{0}.$
In particular, this result refines the classical Kolmogorov's theorem \cite[Theorem III.4.9 (b), p. 139]{BSh}.

\begin{proposition}\label{optim weak l1} If $E(0,\infty)=L_{1}(0,\infty),$ then
$$F(0,\infty)=(L_{1,\infty}(0,\infty))^{0}.$$
\end{proposition}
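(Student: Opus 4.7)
I would establish the equality $F(0,\infty)=(L_{1,\infty}(0,\infty))^{0}$ by proving both inclusions.

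For $F(0,\infty)\subseteq(L_{1,\infty})^{0}$, take $x\in F(0,\infty)$, so Definition~\ref{quasi-banach range} supplies $y\in L_{1}(0,\infty)$ with $\mu(x)\le S\mu(y)$. The identity
\[
t(S\mu(y))(t)=\int_{0}^{t}\mu(s,y)\,ds+t\int_{t}^{\infty}\frac{\mu(s,y)}{s}\,ds
\]
bounds each summand by $\|y\|_{L_{1}}$ (using $t/s\le 1$ in the second), so $S\mu(y)\in L_{1,\infty}(0,\infty)$. To see that $t\mu(t,x)\to 0$ as $t\to 0^{+}$, fix $\varepsilon>0$ and, by absolute continuity of the integral, pick $\delta>0$ with $\int_{0}^{\delta}\mu(s,y)\,ds<\varepsilon$; then for $t<\delta$ the first summand is $<\varepsilon$, $t\int_{t}^{\delta}\mu(s,y)\,s^{-1}ds\le\int_{0}^{\delta}\mu(s,y)\,ds<\varepsilon$, and the tail $t\int_{\delta}^{\infty}\mu(s,y)\,s^{-1}ds\le(t/\delta)\|y\|_{L_{1}}$ vanishes with $t$. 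Hence $x\in(L_{1,\infty})^{0}$.

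For $(L_{1,\infty})^{0}\subseteq F(0,\infty)$, given a nontrivial $x\in(L_{1,\infty})^{0}$, put $g:=\mu(x)$ and $\phi(t):=tg(t)$. By assumption $\phi$ is bounded by $\|x\|_{L_{1,\infty}}$ and $\phi(0^{+})=0$, but $\phi$ need not be monotone. I would replace it with its upper monotone envelope $\phi^{**}(t):=\sup_{0<s\le t}\phi(s)$, which is nondecreasing, bounded, and satisfies $\phi^{**}(0^{+})=0$. The key technical step is verifying that $\phi^{**}$ is quasiconcave in the sense of Subsection~2.5, i.e., $\phi^{**}(t)/t$ is nonincreasing. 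For $t_{1}<t_{2}$: if $\phi^{**}(t_{2})=\phi^{**}(t_{1})$ the bound is trivial; otherwise there exist $s_{n}\in(t_{1},t_{2}]$ with $\phi(s_{n})\to\phi^{**}(t_{2})$, and monotonicity of $g$ gives $s_{n}g(s_{n})\le s_{n}g(t_{1})\le t_{2}g(t_{1})$, so $\phi^{**}(t_{2})\le t_{2}g(t_{1})$; coupled with $\phi^{**}(t_{1})/t_{1}\ge g(t_{1})$ (take $s=t_{1}$ in the sup), this yields $\phi^{**}(t_{2})/t_{2}\le\phi^{**}(t_{1})/t_{1}$.

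Having established quasiconcavity, I would invoke the concave-majorant comparison $\phi^{**}\le\tilde\phi\le 2\phi^{**}$ from Subsection~2.5. Then $\tilde\phi$ is concave and nondecreasing with $\tilde\phi(0^{+})\le 2\phi^{**}(0^{+})=0$ and $\tilde\phi(\infty)\le 2\|\phi\|_{\infty}\le 2\|x\|_{L_{1,\infty}}$. Setting $y:=\tilde\phi'$ a.e.\ yields a nonincreasing nonnegative function with $\mu(y)=y$ and $\|y\|_{L_{1}}=\tilde\phi(\infty)\le 2\|x\|_{L_{1,\infty}}$, so $y\in L_{1}(0,\infty)$. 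Since $\int_{0}^{t}y(s)\,ds=\tilde\phi(t)\ge\phi(t)=t\mu(t,x)$, we obtain
\[
(S\mu(y))(t)\ge\frac{1}{t}\int_{0}^{t}\mu(s,y)\,ds=\frac{\tilde\phi(t)}{t}\ge\mu(t,x),
\]
so $\mu(x)\le S\mu(y)$ and $x\in F(0,\infty)$ with $\|x\|_{F}\le 2\|x\|_{L_{1,\infty}}$. The main obstacle is the passage from the possibly non-monotone $\phi(t)=t\mu(t,x)$ to the quasiconcave envelope $\phi^{**}$; only through this reduction can one invoke the two-sided comparison $\tilde\phi\le 2\phi^{**}$, which is precisely what forces $\tilde\phi(0^{+})=0$ (so that $y=\tilde\phi'$ is a bona fide $L_{1}$-function, not a measure with a singular atom at $0$) and controls $\|y\|_{L_{1}}$ by $\|x\|_{L_{1,\infty}}$---a necessity, since $g=\mu(x)$ itself need not lie in $L_{1}$.
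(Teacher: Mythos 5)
Your proof is correct. The first inclusion is the same argument as the paper's (the paper simply invokes dominated convergence for $t\mu(t,x)=\int_0^\infty\mu(s,y)\min(1,t/s)\,ds\to 0$, while you spell out the $\varepsilon$--$\delta$ splitting and also record the bound $t(S\mu(y))(t)\le 2\|y\|_{L_1}$, which the paper leaves implicit). For the reverse inclusion your starting point coincides with the paper's: both replace $t\mu(t,x)$ by its running supremum (the paper's $f(t)=\sup_{0<s<t}s\mu(s,x)$ is your $\phi^{**}$). Where you diverge is the regularization step. The paper builds an explicit piecewise-linear majorant $h$ on a dyadic grid satisfying $f\le h\le f(4\cdot)$, takes $y=h'\chi_{(0,1)}$, and handles $t\ge 1$ by a separate comparison using the normalization $\|x\|_{L_{1,\infty}}=1$; you instead verify that $\phi^{**}$ is quasiconcave and invoke the least-concave-majorant comparison $\phi^{**}\le\tilde\phi\le 2\phi^{**}$ from Subsection 2.5, taking $y=\tilde\phi'$ globally on $(0,\infty)$. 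Your quasiconcavity check is sound (the case split on whether the sup over $(0,t_2]$ is attained beyond $t_1$, together with monotonicity of $\mu(x)$, does give that $\phi^{**}(t)/t$ is nonincreasing), and the observation that $\tilde\phi(0^+)=0$ rules out a singular contribution at the origin, so $\int_0^t\tilde\phi'=\tilde\phi(t)$ and $\|y\|_{L_1}=\tilde\phi(\infty)\le 2\|x\|_{L_{1,\infty}}$. What your route buys is a cleaner, global construction with no case split at $t=1$ and an explicit quantitative bound $\|x\|_{F}\le 2\|x\|_{L_{1,\infty}}$; what the paper's route buys is a completely elementary, self-contained construction that avoids appealing to the concave-majorant machinery (at the cost of a dyadic interpolation and a harmless rescaling of $y$ hidden in the $1/h(1)$ factor for $t\ge 1$).
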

\begin{proof}

 If $x\in F(0,\infty),$ then by Definition \ref{quasi-banach range}, there exist $y\in L_{1}(0,\infty)$ such that
$$t\mu(t,x)\leq\int_{0}^{t}\mu(s,y)ds+t\int_{t}^{\infty}\mu(s,y)\frac{ds}{s}, \, t>0.$$
By Dominated Convergence Theorem, we have
$t\mu(t,x)\rightarrow0$ as $t\rightarrow0+.$ Thus,
\begin{equation}\label{subset}
F(0,\infty)\subset(L_{1,\infty}(0,\infty))^{0}.
\end{equation}
To see the converse inclusion, take $x\in L_{1,\infty}(0,\infty),$ $\|x\|_{L_{1,\infty}(0,\infty)}=1,$ such that $\lim_{t\rightarrow0+}t\mu(t,x)=0.$ We have to find $y\in L_{1}(0,\infty)$ such that $\mu(x)\leq S\mu(y).$ For
$0<t<1,$ define
 $$f(t)=\sup_{0<s<t}(s\mu(s,x)).$$
 It is clear that $f$ is increasing, positive function, and $t\mu(t,x)\leq f(t),$ $0<t<1.$ By the definition of $f,$ and the hypothesis on $x,$ it can be seen that $f(0+)=0.$
 Let
 $$h(t)=f(2^{n+1})\cdot\frac{2^{n+1}-t}{2^{n+1}-2^{n}}+f(2^{n+2})\cdot\frac{t- 2^{n}}{2^{n+1}-2^{n}}, \,\, t\in(2^{n},2^{n+1}), \, n\in \mathbb{Z}$$
Then, it is easy to see that
 $h(t)\leq f(4t)$ and $f(t)\leq h(t).$
   Define (a.e.)
$$y(t):=\left\{
         \begin{array}{ll}
           h'(t), & 0<t<1, \\
           0, & 1\leq t<\infty.
         \end{array}
       \right.
$$ Clearly, $y\in L_{1}(0,\infty).$
Moreover, if $0<t<1,$ then
$$\mu(t,x)\leq \frac{1}{t}h(t)=\frac{1}{t}\int_{0}^{t}y(s)ds\leq\frac{1}{t}\int_{0}^{t}\mu(s,y)ds\leq S\mu(y)(t),$$
and for $1\leq t<\infty,$
$$\mu(t,x)\leq \frac{1}{t}=\frac{1}{t\cdot h(1)}\int_{0}^{t}\mu(s,y)ds\leq \frac{S\mu(y)(t)}{h(1)}.$$
Therefore, $\mu(t,x)\leq S\mu(y)(t), \, t>0.$  This shows that
$$F(0,\infty)\supset(L_{1,\infty}(0,\infty))^{0}.$$
Combining \eqref{subset} and preceding inclusion, we obtain
$$F(0,\infty)=(L_{1,\infty}(0,\infty))^{0}.$$
\end{proof}

The following proposition describes the optimal range for the triangular truncation operator $T$ on $\mathcal{L}_{1}(H).$
\begin{proposition}\label{nc optim weak l1} If $\mathcal{E}(H)=\mathcal{L}_{1}(H),$ then the optimal range for the triangular truncation operator $T$ is
$$\mathcal{F}(H)=\mathcal{L}_{1,\infty}(H).$$
\end{proposition}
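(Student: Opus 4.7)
The plan is to deduce this proposition from the general Theorem \ref{opt. range th T} applied with $E(\mathbb{Z}_+)=\ell_1(\mathbb{Z}_+)$, thereby reducing everything to an identification of the sequence space $F(\mathbb{Z}_+)$ from Definition \ref{discrete quasi-banach range} as $\ell_{1,\infty}(\mathbb{Z}_+)$. First I would verify the hypothesis $\ell_1(\mathbb{Z}_+)\subset\Lambda_{\log}(\mathbb{Z}_+)$, which is immediate from $\sum_{n\ge 0}\mu(n,a)/(n+1)\le\|a\|_{\ell_1}$; then Theorem \ref{opt. range th T} asserts that the optimal range is the space $\mathcal{F}(H)=\{A:\mu(A)\in F(\mathbb{Z}_+)\}$ associated, via Remark \ref{coincidence opt range}, with the Calkin space $F(\mathbb{Z}_+)=\{a\in(\ell_{1,\infty}+\ell_\infty)(\mathbb{Z}_+):\exists\,b\in\ell_1(\mathbb{Z}_+),\ \mu(a)\le S^d\mu(b)\}$.

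For the inclusion $F(\mathbb{Z}_+)\subset\ell_{1,\infty}(\mathbb{Z}_+)$, I would estimate the two summands in formula \eqref{S dis} separately: for $b\in\ell_1$, the first summand is bounded by $\|b\|_{\ell_1}/(n+1)$, while the second satisfies $\sum_{k>n}\mu(k,b)/k\le(n+1)^{-1}\sum_{k>n}\mu(k,b)\le\|b\|_{\ell_1}/(n+1)$, so that $(S^d\mu(b))(n)\le 2\|b\|_{\ell_1}/(n+1)$. Consequently any $a$ dominated by $S^d\mu(b)$ satisfies $\sup_n(n+1)\mu(n,a)<\infty$, i.e.\ $a\in\ell_{1,\infty}(\mathbb{Z}_+)$.

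For the reverse inclusion, given $a\in\ell_{1,\infty}(\mathbb{Z}_+)$ with $C:=\sup_{n\ge 0}(n+1)\mu(n,a)$, I would simply take $b$ to be the atomic sequence $b(0)=C$, $b(k)=0$ for $k\ge 1$. Then $b\in\ell_1(\mathbb{Z}_+)$ and a direct application of \eqref{S dis} gives $(S^d\mu(b))(n)=C/(n+1)\ge\mu(n,a)$, so that $a\in F(\mathbb{Z}_+)$. There is essentially no obstacle at any stage; the one conceptual point worth emphasising is the contrast with Proposition \ref{optim weak l1}, where to construct a suitable $b\in L_1(0,\infty)$ one had to impose the boundary condition $t\mu(t,x)\to 0$ as $t\to 0+$ and therefore had to pass to the strict subspace $(L_{1,\infty})^0$. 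In the discrete setting this boundary condition at the origin is vacuous, so no analogous closure is needed and $\ell_{1,\infty}$ itself is the optimal range.
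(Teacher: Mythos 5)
Your proposal is correct and follows essentially the same route as the paper: the paper also reduces matters (via Theorem \ref{opt. range th T} and Remark \ref{coincidence opt range}) to identifying $F(\mathbb{Z}_{+})$ and does so by noting that $S^{d}\mu(b)$ for $b\in\ell_{1}(\mathbb{Z}_{+})$ is equivalent to the sequence $\{\frac{1}{n+1}\}_{n\geq0}$, which is exactly the content of your two-sided estimates. Your version merely spells out the upper bound $(S^{d}\mu(b))(n)\leq 2\|b\|_{\ell_{1}}/(n+1)$ and the explicit atomic choice of $b$ for the reverse inclusion, which the paper leaves implicit.
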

\begin{proof} Let $\mathcal{E}=\mathcal{L}_{1}(H).$ If $A_{0}\in \mathcal{L}_{1}(H),$ then $\mu(A_{0})\in\ell_{1}(\mathbb{Z}_{+}).$ Hence, $S^{d}a_{0}\in F(\mathbb{Z}_{+}).$  Since $S^{d}\mu(A_{0})$ is equivalent to the sequence $\Big\{\frac{1}{n+1}\Big\}_{n\geq0},$ it follows that $\ell_{1,\infty}(\mathbb{Z}_{+})= F(\mathbb{Z}_{+}),$ which is by Remark \ref{coincidence opt range} that $\mathcal{L}_{1,\infty}(H)=\mathcal{F}(H).$
\end{proof}

Preceding result is similar to the classical Macaev's theorem \cite[Theorem VII.5.1, p. 345]{GK2}.

\section{Applications.}

In this section, we show important applications of our approach in previous sections to Double Operator Integrals (see Subsection \ref{doi subsection}) associated with Lipschitz functions $f$ defined on $\mathbb{R}.$
The following theorem complements \cite[Theorem 1.2]{CPSZ1},\cite[Theorem 1.2]{CPSZ},\cite[Theorem 8]{Da1},\cite[Theorem 2.2 (i),(ii), Lemma 2.3, and Theorem 3.4 (ii),(iii)]{DDdePS},\cite[Theorem 3.3 and Corollary 3.4]{DDPS},\cite[Theorem 6 (ii), Corollary 7, see also Theorems 12 and 13]{Ko2},\cite[Theorem 2.5 (i)]{NP} and \cite[Theorem 1]{PS}. For simplicity, we state the result in the case, when $\mathcal {M}=B(H).$
\begin{thm} Let $\mathcal{E}(H)$ and $\mathcal{F}(H)$ be as in Theorem \ref{opt. range th T}.
The following assertions hold
\begin{enumerate}[{\rm (i)}]
         \item If $A=A^{*}$ is a self-adjoint operator in $B(H),$ then the double operator integral (associated with a Lipschitz function $f$ defined on $\mathbb{R}$) $T_{f^{[1]}}^{A,A}:\mathcal{E}(H)\rightarrow \mathcal{F}(H)$ is bounded and
$$\|T_{f^{[1]}}^{A,A}\|_{\mathcal{E}(H)\rightarrow \mathcal{F}(H)}<c_{E}\|f'\|_{L_{\infty}(\mathbb{R})};$$
\item

For all self-adjoint operators $A,B\in B(H)$ such that $[A,B]\in \mathcal{E}(H)$ and for every Lipschitz function $f$ defined on $\mathbb{R},$ we have
$$\|[f(A),B]\|_{\mathcal{F}(H)}\leq c_{abs}\|f'\|_{L_{\infty}(\mathbb{R})}\|[A,B]\|_{\mathcal{E}(H)},$$
where $[A,B]:=AB-BA.$
For all self-adjoint operators $X,Y\in B(H)$ such that $X-Y\in \mathcal{E}(H)$ and for every Lipschitz function $f$ defined on $\mathbb{R},$ we have
$$\|f(X)-f(Y)\|_{\mathcal{F}(H)}\leq c_{abs}\|f'\|_{L_{\infty}(\mathbb{R})}\|X-Y\|_{\mathcal{E}(H)}.$$

\end{enumerate}
\end{thm}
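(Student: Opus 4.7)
The plan is to apply Theorem \ref{first main theorem}(ii) to the double operator integral $\mathcal{T}:=T_{f^{[1]}}^{A,A}$, viewed (after normalization by $\|f'\|_{L_\infty(\mathbb{R})}$) as a self-adjoint contraction on $\mathcal{L}_2(H)$, and then read off the target space through Definition \ref{discrete quasi-banach range}. Since $f$ is real Lipschitz, the symbol $f^{[1]}$ is a bounded real-valued Borel function on $\mathbb{R}^2$ with $\|f^{[1]}\|_{L_\infty(\mathbb{R}^2)}\le \|f'\|_{L_\infty(\mathbb{R})}$. The construction of $\mathcal{T}$ as $\int_{\mathbb{R}^2}f^{[1]}d\nu$ against the commuting spectral measure from Subsection \ref{doi subsection} immediately shows that $\mathcal{T}$ is self-adjoint on $\mathcal{L}_2(H)$ with $\|\mathcal{T}\|_{\mathcal{L}_2\to \mathcal{L}_2}\le \|f'\|_{L_\infty(\mathbb{R})}$.

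The key remaining ingredient is the weak-type $(1,1)$ estimate
\[
\|T_{f^{[1]}}^{A,A}(V)\|_{\mathcal{L}_{1,\infty}(H)}\lesssim \|f'\|_{L_\infty(\mathbb{R})}\|V\|_{\mathcal{L}_1(H)},
\]
which is precisely the Nazarov--Peller/Caspers--Potapov--Sukochev--Zanin estimate cited above the theorem statement (e.g. \cite[Theorem 1.2]{CPSZ1}, \cite[Theorem 3.3]{DDPS}). With this in hand, $\mathcal{T}/\|f'\|_{L_\infty(\mathbb{R})}$ satisfies hypothesis \eqref{weak est} of Theorem \ref{first main theorem}(ii), so that theorem gives
\[
\mu\bigl(T_{f^{[1]}}^{A,A}(V)\bigr)\le c_{abs}\|f'\|_{L_\infty(\mathbb{R})}\cdot S^{d}\mu(V),\qquad V\in\Lambda_{\log}(H).
\]
Now for $V\in\mathcal{E}(H)\subset\Lambda_{\log}(H)$ we have $\mu(V)\in E(\mathbb{Z}_+)$; Definition \ref{discrete quasi-banach range} then certifies $T_{f^{[1]}}^{A,A}(V)\in\mathcal{F}(H)$ with $\|T_{f^{[1]}}^{A,A}(V)\|_{\mathcal{F}(H)}\le c_{abs}\|f'\|_{L_\infty(\mathbb{R})}\|V\|_{\mathcal{E}(H)}$, the constant $c_E$ in (i) absorbing the quasi-norm constant of $F(\mathbb{Z}_+)$.

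Part (ii) is deduced from (i) via two standard algebraic identities. For the commutator estimate, integrating the resolution of the identity yields
\[
[f(A),B]=\int_{\mathbb{R}^2}\bigl(f(\lambda)-f(\mu)\bigr)dE_A(\lambda)B\,dE_A(\mu)=T_{f^{[1]}}^{A,A}([A,B]),
\]
so (i) applied to $V=[A,B]$ delivers the required bound. For the Lipschitz difference, I use the $2\times 2$ amplification trick: on $H\oplus H$, set
\[
\tilde A=\begin{pmatrix}X&0\\0&Y\end{pmatrix},\qquad \tilde V=\begin{pmatrix}0&X-Y\\0&0\end{pmatrix}.
\]
A direct spectral computation gives $T_{f^{[1]}}^{\tilde A,\tilde A}(\tilde V)=\bigl(\begin{smallmatrix}0&f(X)-f(Y)\\0&0\end{smallmatrix}\bigr)$, and since $\mu(\tilde V)=\mu(X-Y)$ and the singular values of the off-diagonal $2\times 2$ block coincide with those of $f(X)-f(Y)$, (i) applied on $B(H\oplus H)$ (with the obvious identification $\mathcal{E}(H\oplus H)\simeq \mathcal{E}(H)$, $\mathcal{F}(H\oplus H)\simeq\mathcal{F}(H)$ up to universal constants) produces the Lipschitz estimate.

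The main obstacle is conceptually the weak-type $(1,1)$ bound for $T_{f^{[1]}}^{A,A}$: it is a deep fact, but it is already available from the literature cited in the preamble of the theorem, so in the present paper it is imported rather than re-proved. Once this input is accepted, all the work is done by Theorem \ref{first main theorem}(ii) together with the definition of $\mathcal{F}(H)$ from Definition \ref{discrete quasi-banach range}; the only genuinely new feature over previous commutator/Lipschitz results is that no Fatou-norm assumption on $E$ is required, precisely because Theorem \ref{first main theorem} is formulated in the quasi-Banach symmetric setting.
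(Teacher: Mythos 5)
Your proposal is correct and follows essentially the same route as the paper: import the weak type $(1,1)$ bound for $T_{f^{[1]}}^{A,A}$ from \cite{CPSZ}, feed it into Theorem \ref{first main theorem}(ii) to get $\mu(T_{f^{[1]}}^{A,A}(V))\leq c_{abs}\|f'\|_{L_\infty(\mathbb{R})}S^{d}\mu(V)$, and read off (i) from the definition of $F(\mathbb{Z}_+)$; part (ii) then follows from the identity $T^{A,A}_{f^{[1]}}([A,B])=[f(A),B]$. The only cosmetic difference is that for the Lipschitz estimate you run the $2\times 2$ amplification directly on the double operator integral, while the paper deduces it from the commutator bound by citing the proof of \cite[Theorem 5.3]{CPSZ}, which is the same underlying trick.
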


\begin{proof}
 By Theorem \ref{T theorem}, the operator $S^d$ acts boundedly from $E(\mathbb{Z}_{+})$ into $F(\mathbb{Z}_{+}).$ Since the Double operator integral $T_{f^{[1]}}^{A,A}$ (see Subsection \ref{doi subsection}) associated with a self-adjoint operator
$A\in B(H)$ satisfies the assumptions of the Theorem \ref{first main theorem} (ii) (see \cite[Theorem 1.2]{CPSZ}), it follows that
\begin{eqnarray*}\begin{split}
\|T_{f^{[1]}}^{A,A}(V)\|_{\mathcal{F}(H)}&\leq c_{abs}\|f'\|_{L_{\infty}(\mathbb{R})}\|S^d\mu(V)\|_{F(\mathbb{Z}_{+})}\\
&\leq c_{abs}\|f'\|_{L_{\infty}(\mathbb{R})}\|\mu(V)\|_{E(\mathbb{Z}_{+})}
=c_{abs}\|f'\|_{L_{\infty}(\mathbb{R})}\|V\|_{\mathcal{E}(H)}, \,\ V\in \mathcal{E}(H).
\end{split}\end{eqnarray*}
In other words, $T_{f^{[1]}}^{A,A}:\mathcal{E}(H)\rightarrow \mathcal{F}(H)$ is bounded.

Let us prove $(ii).$ In fact, $(ii)$ follows from $(i).$
The commutator estimate follows from the observation that the double operator integral $T^{A,A}_{f^{[1]}}([A,B])$ is equal to $[f(A),B]$  for the operators $A,B\in B(H)$ such that $[A,B]\in \mathcal{E}(H)$ (see
\cite[Lemma 5.2]{CPSZ}). Therefore, $(ii)$ follows from $(i).$ Finally, as explained in the proof of \cite[Theorem 5.3]{CPSZ}, Lipschitz estimates follow from commutator estimates.
\end{proof}

Let $\mathcal{T}$ be the abstract operator defined in Convention \ref{main setting}. Let $C$ be the Ces\`{a}ro operator defined in \eqref{Ces}.
For brevity, we will denote the notion $\max\{\log(x),0\}$ by $\log_{+}(x).$
The following theorem, which describes a non-commutative analogue of the classical Zygmund's theorem \cite[Theorem V.6.6 (a), p. 248]{BSh} (see also \cite[Corollary IV.6.9, p. 251]{BSh}), was earlier proved in \cite[Theorem 2.5]{HU}. However, for convenience of the reader, we present below a complete proof based on a different approach from that of \cite{HU}.
\begin{thm} \label{L1 est} Let $\mathcal{M}$ be a von Neumann algebra equipped with a faithful normal finite trace $\tau,$ i.e. $\tau(1)<\infty.$ If $A\in \mathcal{L}_{1}(\mathcal{M})$ is a positive operator such that $\|\mu(A)\log_{+}(\mu(A))\|_{L_{1}(0,1)}<\infty,$
then we have
\begin{eqnarray*}\begin{split}\|\mathcal{T}(A)\|_{\mathcal{L}_{1}(\mathcal{M})}
%&\leq c_{abs}\left(\|\mu(A)\|_{L_{1}(0,1)}+\|M(\mu(A))\|_{L_{1}(0,1)}\right)\\
\leq c_{abs}\left(1+\|\mu(A)\log_{+}(\mu(A))\|_{L_{1}(0,1)}\right).
\end{split}\end{eqnarray*}
\end{thm}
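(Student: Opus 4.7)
The plan is to combine Theorem~\ref{first main theorem} with a direct Fubini computation and a classical Zygmund-type rearrangement inequality in order to reduce the $\mathcal{L}_1$-estimate for $\mathcal{T}(A)$ to the stated $L\log L$-hypothesis on $\mu(A)$.

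First, since $\tau$ is finite we have $\mu(s,A)=0$ for $s\geq\tau(1)$, and in particular $A\in\mathcal{L}_1(\mathcal{M})\subset\Lambda_{\log}(\mathcal{M})$, so Theorem~\ref{first main theorem} applies and yields $\mu(\mathcal{T}(A))\leq c_{abs}\,S\mu(A)$. Since $\mathcal{T}(A)\in\mathcal{M}$, the singular value function $\mu(\mathcal{T}(A))$ also vanishes on $(\tau(1),\infty)$, so
\begin{equation*}
\|\mathcal{T}(A)\|_{\mathcal{L}_1(\mathcal{M})}=\int_0^{\tau(1)}\mu(s,\mathcal{T}(A))\,ds\leq c_{abs}\int_0^{\tau(1)}(S\mu(A))(s)\,ds.
\end{equation*}

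Next, I would evaluate the right-hand side by Fubini. A direct calculation, again using $\mu(u,A)=0$ for $u\geq\tau(1)$, gives
\begin{equation*}
\int_0^{\tau(1)}(S\mu(A))(s)\,ds=\int_0^{\tau(1)}\mu(u,A)\log\!\Bigl(\frac{\tau(1)}{u}\Bigr)du+\|A\|_{\mathcal{L}_1(\mathcal{M})}.
\end{equation*}
Under the natural normalization $\tau(1)\leq 1$ (which is consistent with the hypothesis being phrased on $L_1(0,1)$), one has $\log(\tau(1)/u)\leq\log(1/u)$ on $(0,\tau(1))$, so the first summand is bounded by $\int_0^1\mu(u,A)\log(1/u)\,du$.

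Finally, I would carry out the Zygmund-type splitting on $(0,1)$ according to whether $\mu(u,A)\geq u^{-1/2}$ or not. On the region $\{\mu(u,A)\geq u^{-1/2}\}$ one has $\log(1/u)=2\log u^{-1/2}\leq 2\log\mu(u,A)\leq 2\log_+\mu(u,A)$, so the contribution is at most $2\|\mu(A)\log_+\mu(A)\|_{L_1(0,1)}$. On the complementary region the integrand is bounded by $u^{-1/2}\log(1/u)$, whose integral over $(0,1)$ is an absolute constant. Combined with the elementary bound $\|A\|_{\mathcal{L}_1(\mathcal{M})}\leq e+\|\mu(A)\log_+\mu(A)\|_{L_1(0,1)}$ (obtained by splitting at level $\mu(u,A)=e$), this assembles into the claimed inequality.

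No single step is genuinely hard; the crux is recognizing that after invoking Theorem~\ref{first main theorem} the problem collapses to a classical rearrangement estimate on $(0,1)$. The only mildly delicate point is the compatibility of the interval $(0,1)$ in the $L\log L$-hypothesis with the trace normalization $\tau(1)$, which is what the dyadic splitting $\{\mu(u,A)\gtrless u^{-1/2}\}$ gracefully handles.
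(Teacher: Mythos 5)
Your proof is correct, and its first half coincides with the paper's: both invoke Theorem \ref{first main theorem} to replace $\mu(\mathcal{T}(A))$ by $c_{abs}S\mu(A)$ and then reduce the problem to bounding $\int_0^1 (S\mu(A))(s)\,ds$, whose Cesàro part equals $\int_0^1\mu(u,A)\log(1/u)\,du$ by Fubini. Where you diverge is in how this $L\log L$ quantity is controlled. The paper identifies $\int_0^1 C\mu(A)$ with the Lorentz norm $\|\mu(A)\|_{\Lambda_\varphi(0,1)}$, $\varphi(t)=t\log(e/t)$, proves via Köthe duality (using the complementary Young functions $t\log(1+t)$ and $e^t-1$ and a cited lemma of Astashkin--Sukochev) that $\Lambda_\varphi(0,1)=L_M(0,1)$ with equivalent norms, and then passes from the Orlicz norm to the modular $\|M(A)\|_{\mathcal{L}_1}$ to reach $1+\|\mu(A)\log_+\mu(A)\|_{L_1(0,1)}$. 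You instead bound $\int_0^1\mu(u,A)\log(1/u)\,du$ directly by the classical Zygmund splitting along $\{\mu(u,A)\gtrless u^{-1/2}\}$, which gives $2\|\mu(A)\log_+\mu(A)\|_{L_1(0,1)}+c_{abs}$, plus the elementary bound $\|A\|_{\mathcal{L}_1(\mathcal{M})}\le e+\|\mu(A)\log_+\mu(A)\|_{L_1(0,1)}$. Your route is more elementary and self-contained (no Orlicz--Lorentz duality, no appeal to \cite{AS} or to the Orlicz-space material in \cite{BSh}); the paper's route places the estimate in the $\Lambda_\varphi=L\log L$ framework, which is closer in spirit to the classical formulation of Zygmund's theorem. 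Two minor remarks: your explicit normalization $\tau(1)\le 1$ is indeed needed, but the paper relies on it implicitly as well (its identity $\|\mu(A)\|_{L_1(0,1)}=\|A\|_{\mathcal{L}_1(\mathcal{M})}$ requires $\mu(A)$ to be supported in $(0,1)$), so this is not a defect of your argument; and the justification that $\mu(\mathcal{T}(A))$ vanishes beyond $\tau(1)$ should refer to $\mathcal{T}(A)\in S(\mathcal{M},\tau)$ rather than $\mathcal{T}(A)\in\mathcal{M}$, since $\mathcal{T}(A)$ need not be bounded --- the conclusion holds for any $\tau$-measurable operator over a finite trace, so nothing is lost. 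Note also that even if only the submajorization form (part (i)) of Theorem \ref{first main theorem} were available, your argument survives, since you only integrate $\mu(\mathcal{T}(A))$ over $(0,\tau(1))$.
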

\begin{proof} The proof will be divided into several steps.

{\bf Step 1.}
Let $L_{M}(0,1)$ be Orlicz space (see \cite[Chapter IV.8, pp. 265-279]{BSh}) associated with the Young's function $M(t)=t\log(1+t)$ (see \cite[Definition IV.8.5, p. 265]{BSh}) and let $\Lambda_{\varphi}(0,1)$ be the
Lorentz space associated with the function $\varphi(t):=t\log(\frac{e}{t}),$ $t>0.$
First, we claim that
$$\Lambda_{\varphi}(0,1)=L_{M}(0,1)$$

with equivalent norms.

It is easy to see that Young's functions (see \cite[Definition IV. 8.1, p. 271]{BSh})
$M:t\rightarrow t\log(1+t)$ and $N:t\rightarrow e^{t}-1$ are complementary.
Hence, by \cite[Corollary IV.8.15, p. 275]{BSh}, we have
$$L_{M}(0,1)=L_{N}(0,1)^{\times}.$$
On the other hand, by \cite[Lemma 4.3]{AS}, we obtain
$$L_{N}(0,1)=M_{\varphi}(0,1)=\Lambda_{\varphi}(0,1)^{\times},$$
where
$$M_{\varphi}(0,1):=\{x\in S(0,1):\sup_{t>0}\frac{1}{\varphi(t)}\int_{0}^{1}\mu(s,x)ds<\infty\}$$
is the Marcinkiewicz space associated with the function $\varphi(t):=t\log(\frac{e}{t}),$ $t>0$ (see \cite[Chapter II.5, pp. 112-118]{KPS} for more details).
Thus,
\begin{equation}\label{Orlich equivalence} L_{M}(0,1)=\Lambda_{\varphi}(0,1)^{\times\times}=\Lambda_{\varphi}(0,1),
\end{equation}
(see \cite[Chapter II.5, p. 114]{KPS}).

{\bf Step 2.}

If $A\geq0$ is such that $M(A)\in \mathcal{L}_{1}(\mathcal{M}),$ then
\begin{equation}\label{est of Ces}\|A\|_{\mathcal{L}_{M}(\mathcal{M})}
\leq c_{abs}\left(1+\|M(A)\|_{\mathcal{L}_{1}(\mathcal{M})}\right),
\end{equation}
where $\|A\|_{\mathcal{L}_{M}(\mathcal{M})}:=\|\mu(A)\|_{L_{M}(0,1)}.$
Indeed, if $\|A\|_{\mathcal{L}_{M}(\mathcal{M})}\leq 1,$ then there is nothing to prove. If $ \|A\|_{\mathcal{L}_{M}(\mathcal{M})}\geq 1,$ then there exists $\lambda$ such that $\lambda=\|A\|_{\mathcal{L}_{M}(\mathcal{M})}\geq1$ and we have
$\|M(\frac{A}{\lambda})\|_{\mathcal{L}_{1}(\mathcal{M})}=1.$ Since $M(\frac{A}{\lambda})\leq\frac{1}{\lambda}M(A),$ it follows that
$$\|M(A)\|_{\mathcal{L}_{1}(\mathcal{M})}\geq\lambda=\|A\|_{\mathcal{L}_{M}(\mathcal{M})}.$$

{\bf Step 3.}

By Theorem \ref{first main theorem}, we have $\mu(\mathcal{T}(A))\leq c_{abs}S\mu(A).$
Therefore, it is sufficient to estimate $S\mu(A).$ By definition of $S$ (see \eqref{S}), we have
$$S\mu(A)=C\mu(A)+C'\mu(A).$$
By an easy calculation, we obtain
$$\|C'\mu(A)\|_{L_{1}(0,1)}=\|\mu(A)\|_{L_{1}(0,1)}=\|A\|_{\mathcal{L}_{1}(\mathcal{M})}, \, A\in \mathcal{L}_{1}(\mathcal{M})$$
and
$$\|C\mu(A)\|_{L_{1}(0,1)}=\|\mu(A)\|_{\Lambda_{\varphi}(0,1)}, \,\ A\in \Lambda_{\varphi}(\mathcal{M})$$
(see also (6.7) in \cite[Chapter IV.6, p. 245]{BSh}).
Therefore,
it follows from \eqref{est of Ces} and \cite[Theorem IV.6.5, p. 247]{BSh} that
\begin{eqnarray*}\begin{split}
\|\mathcal{T}(A)\|_{\mathcal{L}_{1}(\mathcal{M})}
&\leq c_{abs}\|S\mu(A)\|_{L_{1}(0,1)}\leq c_{abs}\left(\|C\mu(A)\|_{L_{1}(0,1)}+\|\mu(A)\|_{L_{1}(0,1)}\right)\\
&=c_{abs}\left(\|A\|_{\Lambda_{\varphi}(\mathcal{M})}+\|A\|_{\mathcal{L}_{1}(\mathcal{M})}\right)
\approx c_{abs}\left(\|A\|_{\mathcal{L}_{M}(\mathcal{M})}+\|A\|_{\mathcal{L}_{1}(\mathcal{M})}\right)\\
&\leq c_{abs}\left(\|A\|_{\mathcal{L}_{1}(\mathcal{M})}+\|M(A)\|_{\mathcal{L}_{1}(\mathcal{M})}+1\right)
\leq c_{abs}\left(1+\|\mu(A)\log_{+}(\mu(A))\|_{L_{1}(0,1)}\right) .
\end{split}\end{eqnarray*}

\end{proof}
\begin{rmk}
(i) Theorem \ref{L1 est} generalizes both the classical Zygmund's \cite[Corollary IV.6.9, p. 251]{BSh} and Riesz's results \cite[Corollary IV.6.10, p. 251]{BSh} (see also \cite[Theorem V.6.6, p. 248]{BSh}, \cite[Corollary IV.6.8, p. 251]{BSh}).

(ii) Let $\mathcal{H}$ be the non-commutative Hilbert transform associated with finite maximal subdiagonal algebras defined as in \cite{Ran1}. Then, $\mathcal{H}$ satisfies the assumptions of the Theorem \ref{first main
theorem} (see \cite[Theorem 2]{Ran1}). In particular, Theorem \ref{L1 est} implies the result of \cite[Theorem 4]{Ran1}.
\end{rmk}

\section{Acknowledgment}
The first and third authors were partially supported by Australian Research Council. The second author was partially supported by the grant of the Science Committee of the Ministry of Education and Science of the Republic of Kazakhstan. The authors thank the anonymous referee for useful comments which improved the exposition of the paper and for bringing reference \cite{HU}
to their attention.

\end{document}